\documentclass[final]{siamart1116}

\usepackage{amsmath}
\usepackage{mathtools}
\usepackage{amsfonts}
\usepackage{amssymb}
\usepackage{fancyhdr}
\usepackage{graphicx}
\usepackage[section]{placeins}
\usepackage{nicefrac}
\usepackage{bm}
\usepackage{xcolor}
\usepackage[format=plain,indention=.5cm, font={it,small},labelfont=bf]{caption}
\usepackage{subcaption}
\usepackage{float}
\usepackage{enumerate}
\usepackage{tikz}
\usetikzlibrary{arrows.meta}
\usepackage[all]{xy}
\setlength{\textfloatsep}{10pt plus 1.0pt minus 2.0pt}

%
%
%
%
%


\newsiamthm{clm}{Claim}
\newsiamremark{rmk}{Remark}
\newsiamremark{note}{NOTE}
\numberwithin{theorem}{section}


\newcommand{\ol}{\overline}
\renewcommand{\b}{\bm}

\definecolor{dgreen}{RGB}{49,128,23}
\definecolor{nicepink}{RGB}{255, 0, 102}
\definecolor{nicered}{RGB}{255, 80, 80}

\newcommand{\bR}{\mathbb{R}}

\newcommand{\bZ}{\mathbb{Z}}

\newcommand{\cD}{\mathcal{D}}
\newcommand{\cE}{\mathcal{E}}
\newcommand{\cF}{\mathcal{F}}

\newcommand{\cK}{\mathcal{K}}

\newcommand{\cN}{\mathcal{N}}

\newcommand{\cP}{\mathcal{P}}

\newcommand{\cR}{\mathcal{R}}
\newcommand{\cS}{\mathcal{S}}
\newcommand{\cT}{\mathcal{T}}

\newcommand{\cV}{\mathcal{V}}

\newcommand{\ka}{\kappa}
\newcommand{\fp}{\varrho}

\author{James D. Brunner \and Gheorghe Craciun}
\title{Robust persistence and permanence of polynomial and power law dynamical systems}

\headers{Robust persistence and permanence of polynomial systems}{J. D. Brunner and G. Craciun}
\begin{document}
\maketitle
\slugger{siap}{xxxx}{xx}{x}{x--x}

\begin{abstract}
	A persistent dynamical system in $\bR^d_{> 0}$ is one whose solutions have positive lower bounds for large $t$, while a permanent dynamical system in $\bR^d_{> 0}$ is one whose solutions have uniform upper and lower bounds for large $t$. These  properties have important applications for the study of mathematical models in biochemistry, cell biology, and ecology. Inspired by reaction network theory, we define a class of polynomial dynamical systems called tropically endotactic. We show that two-dimensional tropically endotactic polynomial dynamical systems are permanent, irrespective of the values of (possibly time-dependent) parameters in these systems. These results generalize the permanence of two-dimensional reversible, weakly reversible, and endotactic mass action systems.
\end{abstract}

\section{Introduction}\label{intro}
Polynomial dynamical systems are used to model many physical, chemical, and biological processes. For example, systems of polynomial differential equations have come into use in the modeling and simulation of large biochemical networks, population dynamics, and epidemiology \cite{ekesh}\cite{murray}. The study of chemical and biochemical reaction network models is especially concerned with dynamical systems which have polynomial right-hand sides \cite{mfeinlec}. 

In order to understand global long term behavior of solutions of polynomial dynamical systems, we seek to determine whether or not solutions with positive initial conditions remain bounded, and bounded away from zero. A dynamical system on $\bR^d_{>0}$ is said to be \emph{persistent} if for any solution $\b{x}(t)$ with $\b{x}(0)\in \bR^d_{>0}$, we have $\liminf_{t\rightarrow\infty}x_i(t) > 0$ for all $i\in \{1,...,d\}$. In the context of population modeling, this means that no species becomes extinct. The stronger property \emph{permanence} means that there exists a compact region $\cR$ which does not intersect $\partial\bR^d_{>0}$ such that any solution $\b{x}(t)$ with $\b{x}(0)\in \bR^d_{>0}$ ultimately resides inside $\cR$. In other words, there exists a compact attracting region in $\bR^d_{>0}$. Clearly, permanence implies persistence, and additionally it implies that solutions are uniformly bounded, and uniformly bounded away from $\partial \bR^d_{>0}$. 


In recent work, it has been shown that two dimensional weakly reversible and endotactic polynomial dynamical systems are permanent \cite{cranaz}. We will extend these results to the larger class of tropically endotactic polynomial and power law dynamical systems. This class of dynamical systems has the advantage of being robust with respect to changes in the parameters of the systems, which is often useful in applications, because, in practice, it is often difficult or impossible to measure these parameters accurately. In future work, we will show that the tropically endotactic condition is very close to being necessary and sufficient for permanence. 

Other results about persistence of polynomial dynamical systems that result from chemical reaction networks have been obtained by using Petri net-based methods \cite{sontagangelideleenher}. Furthermore, results about the global convergence properties of solutions to polynomial dynamical systems that result from reaction networks have been obtained by taking advantage of special properties of these networks\cite{dave1}\cite{daveandann}\cite{murad2}\cite{toricsystems}\cite{hornandfienberg1}\cite{anneGAC}\cite{Johnston2016}\cite{casianPers}. 

\bigskip
In general, polynomial dynamical systems
\footnote{Note that if we restrict $\b{s}_i\in \bZ^d_{\geq 0}$, then \cref{pds} is exactly the set of polynomial dynamical systems. In this paper we allow the more general case $\b{s}_i\in \bR^d$, often called \emph{power law} systems.} have the form
\begin{equation}\label{pds}
\dot{\b{x}}  = \sum_{i = 1}^n \b{x}^{\b{s}_i}\b{v}_i
\end{equation}
where $\b{x} = (x_1,...,x_d) \in \bR^d$, $\b{s}_i \in \bR^d$, and $\b{v}_i \in \bR^d$. Vector exponentials in $\bR^d$ of the form $\b{x}^{\b{s}}$ are defined by
\begin{equation}
\b{x}^{\b{s}}  = \prod_{j=1}^d x_j^{s_j}
\end{equation}
Here, we are concerned with the more general class of dynamical systems of the form
\begin{equation}
\dot{\b{x}}  = \sum_{i = 1}^n \ka_i(t) \b{x}^{\b{s}_i}\b{v}_i,
\end{equation}
where we allow the coefficients $\ka_i(t) \in \bR$ to vary in time, but we assume that there exist some $\varepsilon>0$ such that $\varepsilon < \ka_i(t) < \frac{1}{\varepsilon}$ for all $t>0$.  We refer to such systems as \emph{variable $\ka$ polynomial (v$\ka$-polynomial) dynamical systems}.

In the analysis of v$\ka$-polynomial dynamical systems, we make use of a special class of \emph{differential inclusions}. In general, differential inclusions are dynamical systems of the form
\begin{equation}\label{diffincintro}
\b{\dot{x}} \in \b{F}(\b{x})
\end{equation}
where $\b{F}(\b{x})$ is a set-valued map. Here, we introduce a class of differential inclusions which captures the dynamics of the polynomial dynamical systems we are interested in. These differential inclusions are called \emph{$\cN$-cone differential inclusions}, where $\widehat{\cN} = \log(\cN)$ is a complete fan in $\bR^d$ (see \cref{fandef,expfan,ncone,fattening}). $\cN$-cone differential inclusions are piecewise constant in $\bR^d_{>0}$ on regions determined by $\widehat{N}\in \widehat{\cN}$. Furthermore they are autonomous and consist of a convex cone $K(N)$ at each point $\b{x}$ in the region determined by $\widehat{N}$. If $\b{f}(\b{x},t) \in \b{F}(\b{x})$ for all $\b{x}$ and $t$ we say that the dynamical system $\b{\dot{x}} = \b{f}(\b{x},t)$ is embedded in the differential inclusion $\b{\dot{x}} \in \b{F}(\b{x})$. Clearly, if a dynamical system is embedded in a differential inclusion, then solutions of the dynamical system are also solutions of the differential inclusion. In particular, if the solutions of a differential inclusion are persistent or permanent, then the same is true for the solutions of a dynamical system embedded in it.

We show that if a two dimensional $\cN$-cone differential inclusion $\b{\dot{x}} \in \b{F}(\b{x})$ is tropically endotactic (see \cref{tropendo}) then its solutions remain bounded and do not approach $\partial \bR^2_{>0}$. This implies that if a v$\ka$-polynomial dynamical system can be embedded into a tropically endotactic differential inclusion, then its solutions are also bounded and do not approach $\partial \bR^2_{>0}$. Moreover, we then show that if this embedding is strict (see \cref{strictemb}) then the polynomial dynamical system has the stronger property of \emph{permanence}. Finally, we give examples of polynomial dynamical systems which are embedded in tropically endotactic differential inclusions.

\bigskip
More specifically, in order to study the persistence of an $\cN$-cone differential inclusion, we identify sets of ``escape directions" $B^{\delta}(N)$ for each set $N\in \cN$. Informally speaking, these are directions along which trajectories may escape any compact region that does not intersect $\partial \bR^d_{>0}$ while staying inside $N$ (see \cref{edirs,escapedirsfig} for details). For example, if the closure of $N$ contains the $y$-axis, $B^{\delta}(N)$ contains any vector $\b{v}$ such that $\b{v} \cdot (1,0) < 0$ (i.e., the left half plane).

We define \emph{tropically endotactic differential inclusions} by comparing the cones $K(N)$ of an $\cN$-cone differential inclusion to the escape directions $B^{\delta}(N)$ corresponding to $\cN$. We call a differential inclusion \emph{tropically endotactic} when $K(N)$ does not intersect the interior of $B^{\delta}(N)$\footnote{along with a technical condition on the cones $K(N)$ and $K(M)$ when $\widehat{M}$ is a face of $\widehat{N}$} for all $N \in \cN$ (see \cref{tropendo} and for an example see \cref{permsyspartition}). This condition is easy to check in two dimensions. Furthermore, it provides a sufficient condition for persistence of solutions of a differential inclusion:
\begin{theorem}\label{introthm1}
	Let $\b{\dot{x}}\in F(\b{x})$ be a differential inclusion defined on $\bR^2_{> 0}$. If $\b{\dot{x}}\in F(\b{x})$ is tropically endotactic, then it is persistent and has bounded trajectories.
\end{theorem}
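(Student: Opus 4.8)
The plan is to prove persistence and boundedness by constructing a Lyapunov-type argument on a suitable family of nested compact regions, using the tropically endotactic condition to control the flow on their boundaries. First I would make precise the geometry: since $\widehat{\cN} = \log(\cN)$ is a complete fan in $\bR^2$, the positive orthant $\bR^2_{>0}$ is partitioned (in logarithmic coordinates) into finitely many cones $\widehat{N}$, and on each corresponding region the differential inclusion is the constant convex cone $K(N)$. The escape directions $B^\delta(N)$ capture precisely the directions along which a trajectory could leave a large compact box while remaining in the region indexed by $N$. The tropically endotactic hypothesis says $K(N)$ avoids the interior of $B^\delta(N)$, so on each region the admissible velocity vectors cannot point along a genuine escape direction.

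My approach would be to work in logarithmic coordinates $\b{y} = \log(\b{x})$, which transforms the positive orthant $\bR^2_{>0}$ into all of $\bR^2$, sends the fan $\cN$ to the honest linear fan $\widehat{\cN}$, and turns ``approaching $\partial\bR^2_{>0}$'' or ``escaping to infinity'' into ``$\b{y}$ leaving a large ball.'' I would then take $\cR_r$ to be (the image of) a large region, say a ball or a fan-compatible polytope of radius $r$, and study the flow of the inclusion on its boundary. The key step is to show that for $r$ large enough, no trajectory starting inside can cross outward: on each unbounded region indexed by $\widehat{N}$, the outward normal directions to $\partial\cR_r$ that point toward infinity lie in $B^\delta(N)$, and since $K(N)$ avoids the interior of $B^\delta(N)$, every admissible velocity has nonpositive (ideally strictly negative) inner product with these outward normals. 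This yields a forward-invariant compact region and simultaneously bounds the trajectory away from $\partial\bR^2_{>0}$, giving both persistence and boundedness at once.

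The main obstacle I expect is handling the boundaries between adjacent cones, i.e. the lower-dimensional faces of the fan. On the interior of a full-dimensional $\widehat{N}$ the argument is clean, but a trajectory can ride along, or repeatedly cross, a ray $\widehat{M}$ that is a common face of two maximal cones $\widehat{N}_1, \widehat{N}_2$; there the inclusion may switch between $K(N_1)$ and $K(N_2)$, and one must ensure the flow still cannot sneak out through the corner. This is exactly where the ``technical condition on the cones $K(N)$ and $K(M)$ when $\widehat{M}$ is a face of $\widehat{N}$'' in the definition of tropically endotactic should do its work: it should guarantee compatibility of the escape-direction estimates across adjacent regions, so that the invariance inequality on $\partial\cR_r$ holds uniformly, including on the measure-zero transition rays. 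I would handle solutions of the differential inclusion in the Filippov/absolutely-continuous sense, so that the boundary-crossing analysis reduces to checking, for almost every $t$, that $\b{\dot{y}}(t)$ does not point outward, and then conclude forward invariance of $\cR_r$ by integrating this one-sided estimate.

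A secondary technical point is that the escape directions $B^\delta(N)$ depend on a small parameter $\delta$ (the ``fattening'' of the fan), so I would choose $\delta$ and then $r$ in the correct order: first fix $\delta$ small enough that the fattened regions still faithfully separate the cones and the tropically endotactic inequalities remain strict, then choose $r$ large enough that $\partial\cR_r$ lies in the asymptotic regime where the outward normals genuinely fall inside $B^\delta(N)$. Assembling these pieces—logarithmic change of coordinates, the per-cone one-sided velocity estimate, the face-compatibility condition across adjacent cones, and the correct ordering of the quantifiers $\delta$ then $r$—produces a compact forward-invariant region bounded away from $\partial\bR^2_{>0}$, which is precisely the conclusion of \cref{introthm1}.
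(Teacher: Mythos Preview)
Your overall strategy—construct a compact forward-invariant region and argue that solutions cannot cross its boundary outward—matches the paper's. But the central inference is wrong and cannot be repaired without a genuinely different construction. You claim: the outward normal $\b{n}$ to $\partial\cR_r$ lies in $B^\delta(N)$, and since $K(N)\cap B^\delta(N)^\circ=\emptyset$, every $\b{v}\in K(N)$ has $\b{v}\cdot\b{n}\le 0$. This does not follow. Two convex cones with disjoint interiors are by no means polar to one another. For most regions $N$ the cone $B^\delta(N)$ is, for small $\delta$, a thin sliver around a single horizontal or vertical ray (cf.\ \cref{escapedirsfig}); disjointness from such a sliver constrains $K(N)$ very little. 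If $B^\delta(N)$ is a narrow cone about $\hat{\b{y}}$ and $K(N)$ a narrow cone about $\hat{\b{x}}$, the hypothesis holds yet $\b{v}\cdot\b{n}>0$ for typical $\b{v}\in K(N)$ and $\b{n}\in B^\delta(N)$. (The premise is also doubtful: the outward normal to a log-ball, pulled back to $\bR^2_{>0}$, does not in general land in $B^\delta(N)$; and under $\b{y}=\log\b{x}$ the inclusion $\dot{\b{x}}\in K(N)$ becomes $\dot{\b{y}}\in\mathrm{diag}(e^{-y_1},e^{-y_2})K(N)$, which is no longer piecewise constant, so the log-coordinate reduction does not simplify the picture in the way you suggest.)

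What the disjointness \emph{does} provide is a \emph{separating line} between $K(N)$ and $B^\delta(N)$, and the paper's proof uses exactly this: it builds, directly in $\bR^2_{>0}$, a polygon whose side inside each region $\mathit{fat}_\fp(N)$ lies along such a separating line, so that the inward normal to that particular side has nonnegative inner product with all of $K(N)$. The substantial work is then to glue these per-region segments into a single closed curve. This requires two preparatory lemmas: \cref{curves} supplies scaffolding curves $S_i(t)\subset\mathit{fat}_\fp(N_i)$ on which the polygon's corners are placed, and \cref{toftau} is a transversality/monotonicity result guaranteeing that a segment anchored on $S_i$ in an admissible direction meets $S_{i+1}$ and can be slid outward coherently to produce the nested exhaustive family. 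Your observation about the face-compatibility condition $K(M)\subseteq K(N)$ is correct and is indeed used at the corners, but only after the sides have been chosen in this specific way; it cannot by itself rescue a region whose sides were not tailored to $K(N)$.
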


Therefore, we can use a tropically endotactic differential inclusion to conclude persistence of a v$\ka$-polynomial dynamical system which is embedded in it. We can also obtain a stronger result when the embedding is into the interiors of the sets of the differential inclusion. If $\b{\dot{x}} = \b{f}(\b{x},t)$ has the property that $\b{f}(\b{x},t) \in \b{F}(\b{x})^{\circ}$ for every $\b{x}$ and $t$, where $\b{F}(\b{x})^{\circ}$ is the interior of $\b{F}(\b{x})$, then we say that $\b{\dot{x}} = \b{f}(\b{x},t)$ is \emph{strictly embedded} in the differential inclusion $\b{\dot{x}} \in \b{F}(\b{x})$. If a v$\ka$-polynomial dynamical system is strictly embedded in a tropically endotactic differential inclusion, we call it a \emph{tropically endotactic v$\ka$-polynomial dynamical system}. We prove the following theorem, which states that being tropically endotactic is a sufficient condition for permanence of a (possibly non-autonomous) v$\ka$-polynomial dynamical system:
\begin{theorem}\label{introthm2}
	Any two-dimensional tropically endotactic v$\ka$-polynomial dynamical system is permanent.
\end{theorem}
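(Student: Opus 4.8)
The plan is to reduce almost all of the work to \cref{introthm1} and then extract permanence from the single extra ingredient that distinguishes a tropically endotactic v$\ka$-polynomial system from a merely embedded one, namely \emph{strictness}. By the definition of strict embedding (\cref{strictemb}), the system $\b{\dot{x}} = \b{f}(\b{x},t)$ satisfies $\b{f}(\b{x},t) \in \b{F}(\b{x})^{\circ}$ for all $\b{x} \in \bR^2_{>0}$ and all $t>0$, where $\b{\dot{x}} \in \b{F}(\b{x})$ is a tropically endotactic $\cN$-cone differential inclusion. Since every solution of the dynamical system is a solution of the differential inclusion, \cref{introthm1} gives at once that the trajectories of $\b{\dot{x}} = \b{f}(\b{x},t)$ are bounded and satisfy $\liminf_{t\to\infty} x_i(t) > 0$. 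What this does not yet provide is a single compact $\cR \subset \bR^2_{>0}$, independent of the initial condition, that every trajectory eventually enters and never leaves; producing such an $\cR$ is the entire content of the upgrade from persistence to permanence.

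The mechanism for this uniformity is the strict containment $\b{f}(\b{x},t) \in K(N)^{\circ}$. The first step I would carry out is to convert this strict inclusion into a quantitative, uniform angular margin. Working in logarithmic coordinates, where $\widehat{\cN} = \log(\cN)$ is a complete fan, each unbounded cone $\widehat{N} \in \widehat{\cN}$ records a direction along which trajectories could leave every compact region of $\bR^2_{>0}$, and the tropically endotactic condition (\cref{tropendo}) asserts that $K(N)$ is disjoint from the interior of the escape cone $B^{\delta}(N)$. Since $K(N)$ and $B^{\delta}(N)$ are closed cones and the coefficients obey the uniform bounds $\varepsilon < \ka_i(t) < \tfrac{1}{\varepsilon}$, I would show there is a constant $\gamma > 0$, independent of $\b{x}$ and $t$, such that $\b{f}(\b{x},t)$ makes an angle of at least $\gamma$ with every direction in $B^{\delta}(N)$ once $\b{x}$ lies far enough out in the region labeled by $\widehat{N}$. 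The $\varepsilon$-bounds on $\ka_i(t)$ are precisely what makes $\gamma$ uniform in $t$, and this is the reason the statement survives the passage to non-autonomous systems.

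With the margin in hand, I would build $\cR$ explicitly as the exponential image of a large box $[-R,R]^2$ in logarithmic coordinates. For $R$ large, the boundary of this box is covered by the unbounded cones $\widehat{N} \in \widehat{\cN}$, and along each such piece the direction pointing out of the box is an escape direction in $B^{\delta}(N)$; the margin $\gamma$ then forces $\b{f}(\b{x},t)$ to point strictly inward, with inward speed bounded below by a positive constant depending only on $\gamma$ and $\varepsilon$. This makes $\cR$ forward-invariant. For attraction, I would note that on the region $\{\max_i |\widehat{x}_i| \ge R\}$ the quantity $\max_i |\widehat{x}_i|$ decreases along trajectories at a rate bounded below, so every trajectory reaches $\{\max_i |\widehat{x}_i| = R\}$ in finite time and thereafter remains in $\cR$. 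Thus $\cR$ is a compact attracting region disjoint from $\partial \bR^2_{>0}$, which is exactly permanence.

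The hard part will not be the invariance argument on the interiors of the top-dimensional faces, but the behavior at the lower-dimensional faces of the fan: the corners of the box and the rays where two cones meet, where a trajectory may transition between regions labeled by $\widehat{N}$ and by a face $\widehat{M}$ of $\widehat{N}$. There one must invoke the technical compatibility condition between $K(N)$ and $K(M)$ built into the definition of tropically endotactic to ensure that crossing trajectories still cannot escape and that the inward margin does not degenerate in the limit. Establishing that $\gamma$ is genuinely uniform across these transitions, rather than merely positive on each cone in isolation, is the delicate point; once it is secured, the construction of $\cR$ and the finite-time absorption estimate are routine.
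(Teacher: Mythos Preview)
Your high-level strategy is right: the upgrade from persistence to permanence must come from the strictness of the embedding, and the way to cash it out is a Lyapunov-type function on the complement of a compact set. But the specific construction you propose does not work, and the gap is not at the corners---it is already on the flat sides.

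The claim ``along each such piece the direction pointing out of the box is an escape direction in $B^{\delta}(N)$; the margin $\gamma$ then forces $\b{f}(\b{x},t)$ to point strictly inward'' is where the argument breaks. For a two-dimensional cone $\widehat{N}$ that does not touch the coordinate axes or the diagonal, $B^{\delta}(N)$ collapses, as $\delta\to 0$, to a single half-line (see the discussion surrounding \cref{escapedirsfig}). The tropically endotactic condition only forbids $K(N)$ from meeting the \emph{interior} of that thin cone; it leaves $K(N)$ free to be nearly a half-plane, and in particular free to contain vectors with positive component in the outward box normal. Concretely, for $\widehat{N}$ in the first quadrant below the diagonal, $B^{\delta}(N)$ is a thin cone about $\b{\hat{x}}$ opening slightly upward, so $K(N)$ may contain a vector such as $(1,-10)$; at a point on the right face $\{x=e^R\}$ lying in $\mathit{fat}_{\fp}(N)$ the field then points out of the box. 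Thus neither the box $[e^{-R},e^R]^2$ nor the function $\max_i|\widehat{x}_i|$ is forward-decreasing along trajectories in general.

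What the paper does instead is to reuse the full strength of \cref{thethm}, not just its persistence conclusion: that theorem actually produces a nested one-parameter family $\{\cR_t\}_{t\ge \hat t}$ of compact forward-invariant polygons whose edges are chosen to be \emph{supporting lines of the cones $K(N)$} in the regions they cross. The Lyapunov function is then $\Lambda(\b{x})=\inf\{t:\b{x}\in\cR_t\}$, whose level sets are exactly these tailor-made polygons. Strict embedding makes $\nabla\Lambda\cdot\b{f}<0$ on the smooth part of each $\partial\cR_t$, and the corners are handled with subgradients of lower-$C^1$ functions. The essential geometric input you are missing is that the sides of the invariant region must be adapted to $K(N)$, not to the coordinate axes; once you have that family from \cref{thethm}, the permanence argument is short.
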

 


Finally, we give examples of systems which are not endotactic but which are tropically endotactic, and therefore permanent. We also show that \cref{introthm2} is a generalization of the permanence of weakly reversible two dimensional systems as described in \cite{gheorgheGAC}.

\section{Differential inclusions}\label{defnot}

\subsection{Piecewise Constant Cone Differential Inclusions}

A differential inclusion is a dynamical system 
\[
\b{\dot{x}} \in \b{F}(\b{x})
\]
where $\b{F}$ is a set-valued map. We are interested in the special case in which $\b{F}(\b{x})$ is a cone, and is constant on regions of $\bR^2_{> 0}$. 

We use a \emph{fan} in $\bR^2$ to define a cover of $\bR^2$ and $\bR^2_{>0}$. Definitions of fans and cones follow \cite{polytopes} and \cite{nonneg}. With a set $S \subseteq \bR^d$ we associate the set $\mathit{Cone}(S)$, the cone \emph{generated} by $S$, which we define as the closure of the set of all finite, nonnegative linear combinations of the elements of $S$ \cite{nonneg}
\footnote{We use the definition from \cite{nonneg} and \cite{polytopes}, which defines cones to be closed and convex. In other sources, cones are not necessarily convex.}.

We will be concerned with the cones generated by finite sets of vectors $S = \{\b{v}_1,...,\b{v}_k\}$. In this case, a cone $K$ is
\[
K = \left\{\left.\b{w} = \sum_{i=1}^k a_i \b{v}_i\right| a_i\geq 0\right\}
\]
and is called a \emph{polyhedral cone}. In what follows, we will simply use the word ``cone" to mean polyhedral cone.

A cone $K$ is \emph{solid} if the interior of $K$ is non-empty. A \emph{supporting hyperplane} $H$ of a cone $K$ is a hyperplane that intersects $K$ at the origin and such that $K$ is contained in only one of the two half-spaces determined by $H$. A \emph{face} of $K$ is the intersection between $K$ and a supporting hyperplane.

\begin{definition}\label{fandef}\cite{polytopes}
A \emph{fan} (or polyhedral fan) in $\bR^d$ is a finite family $\widehat{\cN} = \{\widehat{N}_1,\widehat{N}_2,...,\widehat{N}_n\}$ of nonempty polyhedral cones such that:
\begin{enumerate}[(i)]
\item Every nonempty face of a cone in $\widehat{\cN}$ is also a cone in $\widehat{\cN}$.
\item The intersection of any two cones in $\widehat{\cN}$ is a face of both.
\end{enumerate}
\end{definition}

If the union of cones in the fan $\cN$ is $\bR^d$, then $\cN$ is called a \emph{complete fan}. Because we are concerned only with complete fans, we will use the word fan to mean complete fan. A complete fan is a \emph{cover} of $\bR^d$, and moreover, the collection of relative interiors of the cones of the fan forms a partition of $\bR^d$. We can use a cover of $\bR^d$ given by a complete fan to construct a cover of $\bR^d_{>0}$, as described below.
\begin{definition}\label{expfan}
A finite family of sets $\cN = \{N_1,N_2,...,N_k\}$ in $\bR^d_{>0}$ is called an \emph{exponential fan} if there exists a complete fan $\widehat{\cN} = \{\widehat{N}_1,\widehat{N}_2,...,\widehat{N}_k\}$ in $\bR^d$ such that $N_i = \exp(\widehat{N}_i)$ for $i=1,...,k$.
\end{definition}
Notice that, because the map $\exp:\bR^d\rightarrow \bR^d_{>0}$ is bijective, an exponential fan defines a cover of $\bR^d_{>0}$, and the collection of relative interiors form a partition of $\bR^2_{>0}$. See \cref{fattening} (a) and (b) for examples of a fan and exponential fan, respectively. 

\medskip

Given an exponential fan $\cN$ of $\bR^2_{>0}$, before we can define an $\cN$-cone differential inclusion, we must define different covers $\mathit{fat}_{\fp}(\widehat{\cN})$ and $\mathit{fat}_{\fp}(\cN)$ of $\bR^2$ and $\bR^2_{> 0}$ respectively, for some number $\fp \in (0,1)$, as in \cref{fattening} (c) and (d). First, we define a compact region around the origin
\begin{multline}
\mathit{fat}_{\fp}(\b{0})  = \left\{\b{X} \in \bR^2|dist(\b{X},\widehat{N}_{j})\leq |\log(\fp)|,\,\text{for at least}\right. \\ \left. \text{ two one-dimensional cones }\widehat{N}_{j}\right\}
\end{multline}
Then, we ``fatten" the one dimensional cones of the fan $\widehat{\cN}$. Denote by $R^{\circ}$ the interior of a region $R \subset \bR^2$. For each one dimensional cone $\widehat{N}_j$ of $\widehat{\cN}$, we define
\begin{equation}
\mathit{fat}_{\fp}(\widehat{N}_{j}) = \left\{\b{X}\in \bR^2|dist(\b{X},\widehat{N}_{j})\leq |\log(\fp)|\right\}\setminus \mathit{fat}_{\fp}(\b{0})^{\circ}
\end{equation}
which is a strip centered around $\widehat{N}_{j}$ with an area near the origin removed. Finally, we also must take into account regions in the two dimensional cones not included in these strips. We thus define, for two dimensional cones $\widehat{N}_i$,
\begin{equation}
\mathit{fat}_{\fp}(\widehat{N}_{i}) = \widehat{N}_{i}\setminus \left(\bigcup_{\widehat{N}_{j}|dim(\widehat{N}_{j}) \leq 1}\mathit{fat}_{\fp}(\widehat{N}_{j})\right)^{\circ}
\end{equation}
That is, $\mathit{fat}_{\fp}(\widehat{N}_{i})$ is obtained from the original two dimensional cone $\widehat{N}_{i}$ by removing the regions $\mathit{fat}_{\fp}(\widehat{N}_j)$ for lower dimensional cones $\widehat{N}_j$ (except for the borders, so that all regions remain closed). Then, we define $\mathit{fat}_{\fp}(\widehat{\cN}) = \{\mathit{fat}_{\fp}(\widehat{N}_i)| \widehat{N}_i \in \widehat{\cN}\}$. See \cref{fattening} (c) for examples of the regions $\mathit{fat}_{\fp}(\widehat{N}_i)$.

\begin{figure}[H]
	\begin{center}
		\begin{subfigure}[t]{0.22\textwidth}
			\begin{center}
				\begin{tikzpicture}[scale = 0.5]
				\draw[<->] (0,-2.3) -- (0,2.3);
				\draw[<->] (-2.3,0) -- (2.3,0);
				\draw[color = blue, very thick] plot[domain = -1:0] (\x,2*\x);
				\draw[color = blue, very thick] plot[domain = -1:0] (2*\x,\x);
				\draw[color = blue, very thick] plot[domain = 2:0] (-1*\x,\x);
				\draw[color = blue, very thick] plot[domain = 1:0] (\x,2*\x);
				\draw[color = blue, very thick] plot[domain = 0.67:0] (3*\x,\x);
				\draw[color = blue, very thick] plot[domain = 1:0] (\x,-2*\x);
				\end{tikzpicture}
			\end{center}
			\caption{}
		\end{subfigure}\quad
		\begin{subfigure}[t]{0.22\textwidth}
			\begin{center}
				\begin{tikzpicture}[scale = 0.8]
				\draw[->] (0,0) -- (0,3);
				\draw[->] (0,0) -- (3,0);
				\draw[color = blue, very thick] plot[domain = 1:0] (\x,\x^2);
				\draw[color = blue, very thick] plot[domain = 1:0] (\x^2,\x);
				\draw[color = blue, very thick] plot[domain = 1:0.35] (\x,\x^-1);
				\draw[color = blue, very thick] plot[domain = 1.7:1] (\x,\x^2);
				\draw[color = blue, very thick] plot[domain = 1.4:1] (\x^3,\x);
				\draw[color = blue, very thick] plot[domain = 3:1] (\x,\x^-2);
				\end{tikzpicture}
			\end{center}
			\caption{}
		\end{subfigure}\quad
		\begin{subfigure}[t]{0.22\textwidth}
			\begin{center}
				\begin{tikzpicture}[scale = 0.5]
				\draw[<->] (0,-2.3) -- (0,2.3);
				\draw[<->] (-2.3,0) -- (2.3,0);
				\draw[color = blue, line width = 0.2 cm] plot[domain = -1:0] (\x,2*\x);
				\draw[color = blue, line width = 0.2 cm] plot[domain = -1:0] (2*\x,\x);
				\draw[color = blue, line width = 0.2 cm] plot[domain = 2:0] (-1*\x,\x);
				\draw[color = blue, line width = 0.2 cm] plot[domain = 1:0] (\x,2*\x);
				\draw[color = blue, line width = 0.2 cm] plot[domain = 0.67:0] (3*\x,\x);
				\draw[color = blue, line width = 0.2 cm] plot[domain = 1:0] (\x,-2*\x);
				\fill[color = red] (0,-0.4)--(-0.37,-0.37)--(-0.27,0.05)--(-0.055,0.28)--(0.33,0.3)--(0.23,-0.08);
\end{tikzpicture}
			\end{center}
			\caption{}
		\end{subfigure}\qquad
		\begin{subfigure}[t]{0.22\textwidth}
			\begin{center}
				\begin{tikzpicture}[scale = 0.8]
				\draw[->] (0,0) -- (0,3);
				\draw[->] (0,0) -- (3,0);
				\path[fill,color = blue] plot[domain = 1:0] (\x,1.1*\x^2) -- plot[domain = 0:1] (\x,0.9*\x^2);
				\path[fill,color = blue] plot[domain = 1:0] (1.1*\x^2,\x) -- plot[domain = 0:1] (0.9*\x^2,\x);
				\path[fill,color = blue] plot[domain = 1:0.35] (\x,1.1*\x^-1) -- plot[domain = 0.35:1] (\x,0.9*\x^-1);
				\path[fill,color = blue] plot[domain = 1.7:1] (\x,1.1*\x^2) -- plot[domain = 1:1.7] (\x,0.9*\x^2);
				\path[fill,color = blue] plot[domain = 1.4:1] (1.1*\x^3,\x) -- plot[domain = 1:1.4] (0.9*\x^3,\x);
				\path[fill,color = blue] plot[domain = 3:1] (\x,1.1*\x^-2) -- plot[domain = 1:3] (\x,0.9*\x^-2);
				\fill[color = red,rotate=45] (1.4,0) ellipse(4 pt and 2 pt);
				\end{tikzpicture}
			\end{center}
			\caption{}
		\end{subfigure}
	\end{center}
	\caption{\emph{(a)} A fan $\widehat{\cN}$, which contains 13 cones $\widehat{N}_i$; 6 of these are one dimensional, 6 are two dimensional, and one (the origin) is zero dimensional. \emph{(b)}The exponential fan $\cN = \exp(\widehat{\cN})$. \emph{(c)}The regions $\mathit{fat}_{\fp}(\widehat{N}_i)$. The regions corresponding the two dimensional cones are shown in white, those corresponding to one dimensional cones are shown in blue, and $\mathit{fat}_{\fp}(\b{0})$ is shown in red. \emph{(d)}The regions $\mathit{fat}_{\fp}(N_i) = \exp(\mathit{fat}_{\fp}(\widehat{N}_i))$.}
	\label{fattening}
\end{figure}
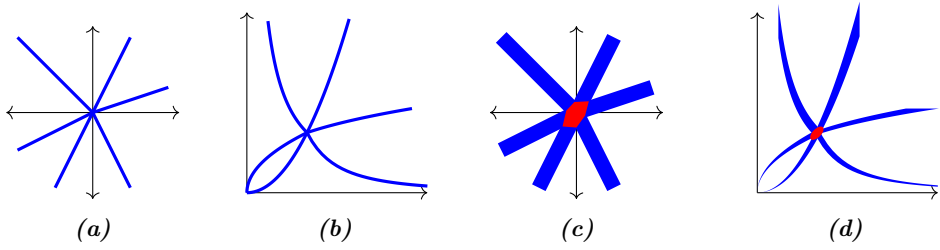

These regions can be used to construct a cover of $\bR^2_{> 0}$. That is, if $\widehat{\cN} = \{\widehat{N}_i\}$ is a complete fan and $\cN = \exp(\widehat{\cN})$ is its corresponding exponential fan, then we define
\begin{equation}
\mathit{fat}_{\fp}(N_{i}) = \left\{\b{x}| \log(\b{x}) \in \mathit{fat}_{\fp}(\widehat{N}_{i})\right\}
\end{equation}
where $N_i = \exp(\widehat{N}_i)$, and we define $\mathit{fat}_{\fp}(\cN) = \{\mathit{fat}_{\fp}(N_i)|N_i \in \cN\}$. See \cref{fattening} (d) for examples of the regions $\mathit{fat}_{\fp}(N_i)$.

Finally, we can now define the class of differential inclusions of interest.
\begin{definition}\label{ncone}
	Consider an exponential fan $\cN$ in $\bR^d_{>0}$. An \emph{$\cN$-cone differential inclusion $\cK(\cN)$ (with parameter $\fp$)} is a dynamical system of the form
	\[
	\b{\dot{x}} \in \bigcup_{\{N|\b{x} \in \mathit{fat}_{\fp}(N)\}} K(N)
	\]
	for some $\fp >0$, where $K(N)$ is a cone for each $N\in \cN$.
\end{definition}

Differential inclusions provide a framework for sacrificing precise information about a dynamical system in order to simplify the analysis in some way. We are interested in analyzing polynomial dynamical systems which may be non-autonomous and highly nonlinear. Such systems are in general difficult to analyze. We therefore replace these systems with $\cN$-cone differential inclusions, which are autonomous and piecewise constant. Then, if we have that the right-hand side of the non-autonomous polynomial dynamical system is contained in the right-hand side of the $\cN$-cone differential inclusion, the properties of solutions to the differential inclusion will be satisfied by solutions to the polynomial dynamical system. In order to make this rigorous, we define a notion of \emph{embedding}.

\begin{definition}
	We say a dynamical system $\b{\dot{x}} = \b{f}(\b{x},t)$ is \emph{embedded} into the differential inclusion $\b{\dot{x}}\in \b{F}(\b{x})$ in the domain $\Omega$ if $\b{f}(\b{x},t) \in \b{F}(\b{x})$ for every $\b{x} \in \Omega$ and for all $t$.	
\end{definition}

Similarly, we can embed one differential inclusion into another.

\begin{definition}
	We say a differential inclusion $\b{\dot{x}} \in \b{G}(\b{x})$ is \emph{embedded} into the differential inclusion $\b{\dot{x}}\in \b{F}(\b{x})$ in the domain $\Omega$ if $\b{G}(\b{x}) \subseteq \b{F}(\b{x})$ for every $\b{x} \in \Omega$.	
\end{definition}

We will need a stronger notion of embedding to obtain some conclusions about the systems of interest.
\begin{definition}\label{strictemb}
	We say a dynamical system $\b{\dot{x}} = \b{f}(\b{x},t)$ is \emph{strictly embedded} into the differential inclusion $\b{\dot{x}}\in \b{F}(\b{x})$ in the domain $\Omega$ if $\b{f}(\b{x},t) \in \b{F}(\b{x})^{\circ}$ for every $\b{x} \in \Omega$ and for all $t$.
\end{definition}

\subsection{Persistence}
We are concerned with the long term behavior of solutions to these differential inclusions. In particular, we want to determine if an $\cN$-cone differential inclusion allows solutions which reach $\partial \bR^2_{> 0}$. If the differential inclusion does not allow such solutions, we call it persistent.

\begin{definition}\label{defper}
	A d-dimensional dynamical system is called \emph{persistent} on $\bR^d_{\geq 0}$ if for any solution $\b{x}(t)$ defined on an interval $I$ containing $t = 0$  with initial condition $\b{x}_0 \in \bR^d_{>0}$, there exists some $\varepsilon>0$ such that we have
		\[
		x_i(t) > \varepsilon 
		\]
	for all $i \in \{1,...,d\}$ and for all  $t\in I \cap [0,\infty)$.\cite{cranaz}
\end{definition}

%

In particular, we will say that an $\cN$-cone differential inclusion is persistent if all absolutely continuous functions satisfying the differential inclusion \cite{smirnov} have the above property.

\subsection{Escape directions}
Let $\widehat{\cN}$ be a complete fan and $\cN = \exp(\widehat{\cN})$ be its associated exponential fan. We will construct a cone $B^{\delta}(N)$ for each $N\in \cN$, referred to as the ``escape directions of $N$". Consider curves $C(t)$ which have the property that for any compact set $\cR \subset \bR^2_{>0}$, there exists some $t_{\cR}$ such that for $t>t_{\cR}$, $C(t) \not \in \cR$. We will use the notation $C(t) \rightarrow \partial\bR^d_{> 0}$ for curves with this property. Notice that curves of the form $\exp(\b{r}t)$ with $\b{r} \neq 0$ satisfy $\exp(\b{r}t) \to \partial \bR^2_{> 0}$. The curves $C(t)$ described below provide a more general way to leave $\cR$ while staying in or close to $N$. Also, we use the notation $[\b{w}]_n$ to denote the unit vector in the direction of a vector $\b{w} \in \bR^2$ and $C'(t)$ to mean the tangent direction to $C$. 

%
More precisely, we consider $C(t) = \exp\left(\b{r}t + g(t) \b{p}\right)$ where $\b{r} \in \widehat{N}$, $\|\b{r}\|=1$, $g(t) = 1 - \alpha e^{-\beta t}$, $\beta\geq 0$, $\alpha \in \bR$, and $\b{p}\in \b{r}^{\perp}$. If $C(t)$ is such a curve and there exists some $t_0$ such that $\log(C(t))\subset \mathit{fat}_{\fp}(\widehat{N})$ for $t>t_0$, we call $C(t)$ a \emph{$\fp$-escape curve of $N$}. Then, we define the set of $\delta$-escape directions for $N$ as follows.

\begin{definition}\label{edirs}
	The set of \emph{$\delta$-escape directions} for a region $N\in \cN$ and $\delta >0$ is the cone
	\begin{equation}
	B^{\delta}(N) = \bigcap_{\fp < 1} \mathit{Cone}\left(\left\{\left. \left[C'(t)\right]_n\right|C(t)\text{ is a \emph{$\fp$-escape curve of }}N,\,t>\nicefrac{1}{\delta}\right\}\right)
	\end{equation}
\end{definition}
Intuitively, these are cones with the property that, for $\delta$ sufficiently small, if a curve $\b{y}(t)$ has some point $\b{y}(t_0) \in \mathit{fat}_{\fp}(N)$ and tangent $\b{y}'(t) \in B^{\delta}(N)$ for $t>t_0$, then $\b{y}(t) \rightarrow \partial \bR^2_{> 0}$ and $\b{y}(t) \in \mathit{fat}_{\fp}(N)$ for $t>t_0$. Clearly, if such a $\b{y}(t)$ is a solution to an $\cN$-cone differential inclusion, the differential inclusion is not persistent or does not have bounded trajectories. Using \cref{edirs}, these directions can be explicitly calculated. All the qualitatively different possible cones $B^{\delta}(N_i)$ are shown in \cref{escapedirsfig}. See \cref{curve_calc1,curve_calc2,curve_calc3,curve_calc4,curve_calc5} in \cref{examples} for an example of how to calculate some cones $B^{\delta}(N_i)$. Such calculations reveal that if a region $N_i$ is not adjacent to the lines $x=0$, or $y=0$, and does not contain some part of the line $y=x$, then the cone $B^{\delta}(N_i)$ approaches either a vertical or a horizontal half-line as $\delta$ approaches $0$.
\begin{figure}
		\begin{center}
			\begin{tikzpicture}[scale = 2]
			\draw[->] (0,0) -- (0,2.5);
			\draw[->] (0,0) -- (2.5,0);
			\path[fill,color = blue,opacity = 0.5] plot[domain =1.51:0] (\x,1.1*\x^2) -- plot[domain = 0:1.67] (\x,0.9*\x^2);
			\path[fill,color = blue,opacity = 0.5] plot[domain =1.51:0] (1.1*\x^2,\x) -- plot[domain = 0:1.67] (0.9*\x^2,\x);
			\path[fill,color = blue,opacity = 0.5] plot[domain =0.35:2.5] (\x,0.9*\x^-1) -- plot[domain = 2.5:0.43] (\x,1.1*\x^-1);
			\path[fill,color = blue,opacity = 0.5] (0,1.1)--(2.5,1.1)--(2.5,0.9)--(0,0.9);
			\path[fill,color = blue,opacity = 0.5] (1,1)--(2.5,2.35)--(2.5,2.5)--(2.35,2.5);
			\draw[color = red, latex-latex] (2.5,2.1)--(2.1,2.1)--(2.1,2.5);
			\fill[color = red, opacity = 0.5]  (2.5,2.1)--(2.1,2.1)--(2.1,2.5);
			\draw[color = red, -latex] (2.1,1.8)--(2.5,1.8);
			\fill[color = red, opacity = 0.5]  (2.5,1.85)--(2.1,1.8)--(2.5,1.8);
			\draw[color = red, -latex] (2.1,1.52)--(2.5,1.52);
			\fill[color = red, opacity = 0.5]  (2.5,1.57)--(2.1,1.52)--(2.5,1.52);
			\draw[color = red, -latex] (2.1,1.3)--(2.5,1.3);
			\fill[color = red, opacity = 0.5]  (2.5,1.35)--(2.1,1.3)--(2.5,1.3);
			\draw[color = red, latex-latex] (2.5,1.05)--(2.1,1)--(2.5,0.95);
			\fill[color = red, opacity = 0.5]  (2.5,1.05)--(2.1,1)--(2.5,0.95);	
			\draw[color = red, -latex] (2.1,0.7)--(2.5,0.7);
			\fill[color = red, opacity = 0.5]  (2.5,0.65)--(2.1,0.7)--(2.5,0.7);
			\draw[color = red, -latex] (2.1,0.45)--(2.5,0.45);
			\fill[color = red, opacity = 0.5]  (2.5,0.4)--(2.1,0.45)--(2.5,0.45);
			\draw[color = red, latex-latex] (1,0.4)--(1.5,0.4);
			\fill[color = red, opacity = 0.5]  (1,0.4)--(1.5,0.4)--(1.5,0.25)--(1,0.25);	
			\draw[color = red, -latex] (1.8,2.1)--(1.8,2.5);
			\fill[color = red, opacity = 0.5]  (1.85,2.5)--(1.8,2.1)--(1.8,2.5);
			\draw[color = red, -latex] (1.5,2.1)--(1.5,2.5);
			\fill[color = red, opacity = 0.5]  (1.55,2.5)--(1.5,2.1)--(1.5,2.5);
			\draw[color = red, -latex] (0.45,2.1)--(0.45,2.5);
			\fill[color = red, opacity = 0.5]  (0.4,2.5)--(0.45,2.1)--(0.45,2.5);
			\draw[color = red, latex-latex] (0.95,2.5)--(1,2.1)--(1.05,2.5);
			\fill[color = red, opacity = 0.5]  (0.95,2.5)--(1,2.1)--(1.05,2.5);
			\draw[color = red, latex-latex] (0.4,2)--(0.4,1.5);
			\fill[color = red, opacity = 0.5]  (0.4,2)--(0.4,1.5)--(0.3,1.5)--(0.3,2);
			\draw[color = red, latex-latex] (0.3,0.9)--(0.3,1.1);
			\fill[color = red, opacity = 0.5]  (0.2,0.9)--(0.2,1.1)--(0.3,1.1)--(0.3,0.9);
			\draw[color = red, latex-latex] (0.15,0.5)--(0.15,0.8);
			\fill[color = red, opacity = 0.5]  (0.15,0.5)--(0.15,0.8)--(0.05,0.8)--(0.05,0.5);
			\draw[color = red, -latex] (0.15,0.4)--(0.15,0.2);
			\fill[color = red, opacity = 0.5]  (0.15,0.4)--(0.15,0.2)--(0.1,0.2);
			\draw[color = red, -latex] (0.4,0.15)--(0.2,0.15);
			\fill[color = red, opacity = 0.5]  (0.4,0.15)--(0.2,0.15)--(0.2,0.1);
			\draw[color = red, latex-latex] (0.3,0.5)--(0.5,0.5)--(0.5,0.3);
			\fill[color = red, opacity = 0.5]  (0.3,0.5)--(0.5,0.5)--(0.5,0.3);
			\end{tikzpicture}
		\caption{Cones $B^{\delta}(N_i)$ for a representative fan. Notice that if the relative interior of $N_i$ contains the half-line $y=x<1$, then $B^{\delta}(N_i) = - \bR^2_{> 0}$ does not depend on $\delta$ and if the relative interior of $N_i$ contains the half-line $y=x>1$, then $B^{\delta}(N_i) = \bR^2_{> 0}$ does not depend on $\delta$. Likewise, if $N_i$ is adjacent to the line $x =0$ or the line $y=0$, then $B^{\delta}(N_i)$ is a half-plane that does not depend on $\delta$. The dependence on limiting tangents of curves of the form of $C(t)$ inspired the adjective ``tropical"\cite{macsturm}.}\label{escapedirsfig}
\end{center}
\end{figure}
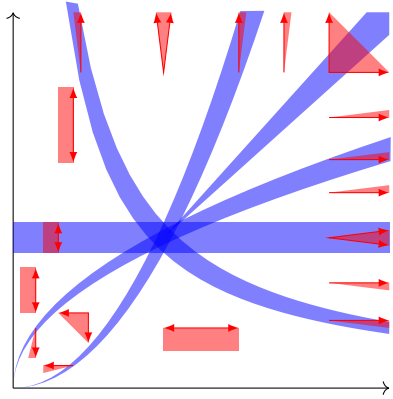

\subsection{Tropically Endotactic Differential Inclusions}

Now, we define the class of differential inclusions that we are most interested in.

\begin{definition}\label{tropendo}
A differential inclusion in $\bR^2_{>0}$ is \emph{tropically endotactic} if it is embedded in some $\cN$-cone differential inclusion $\cK(\cN)$ such that for every $N\in \cN\setminus \{\b{1}\}$, there is some $\delta>0$ such that
\[K(N) \cap B^{\delta}(N)^{\circ} = \emptyset\]
and if $N \in \cN$ is a face of $M \in \cN$, then $K(M) \subseteq K(N)$.
\end{definition}

\section{Persistence of tropically endotactic differential inclusions}\label{proof_1}
\begin{theorem}\label{thethm}
Let $\b{\dot{x}}\in \b{F}(\b{x})$ be a differential inclusion defined on $\bR^2_{> 0}$. If $\b{\dot{x}}\in \b{F}(\b{x})$ is tropically endotactic, then it is persistent and has bounded trajectories. Furthermore, there exists a nested family of compact regions which do not intersect $\partial \bR^2_{>0}$ which are forward invariant under $\b{\dot{x}}\in \b{F}(\b{x})$, and this family covers $\bR^2_{>0}$.
\end{theorem}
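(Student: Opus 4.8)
The plan is to construct the promised forward-invariant compact regions directly as sublevel sets of the ``fattened'' cone decomposition, using the escape-direction condition to control the flow across region boundaries. Because $\widehat{\cN}$ is a complete fan in $\bR^2$, the one-dimensional cones give finitely many rays emanating from the origin, and away from a neighborhood of the origin each ray $\widehat{N}_j$ of positive slope determines either a vertical-type or horizontal-type escape cone $B^{\delta}(N_j)$ (as summarized in \cref{escapedirsfig}), while the rays along the axes give half-plane escape cones. The first step is to fix $\fp \in (0,1)$ and a sufficiently small $\delta>0$ so that the tropically endotactic condition $K(N)\cap B^{\delta}(N)^{\circ}=\emptyset$ holds simultaneously for all $N\in\cN\setminus\{\b{1}\}$; this is possible since there are finitely many cones.

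Next I would build a candidate boundary curve $\Gamma_c$ for each scale parameter $c$ by concatenating, in $\log$-coordinates, arcs that lie in each $\mathit{fat}_{\fp}(\widehat{N}_j)$ and run transversally to the corresponding escape directions. Passing to $\log$-coordinates is the right move: the exponential fan becomes an honest fan, $\cN$-cone inclusions become genuinely piecewise constant, and curves of the form $C(t)=\exp(\b{r}t+g(t)\b{p})$ straighten out so that the escape-cone geometry of \cref{edirs} is transparent. In each strip $\mathit{fat}_{\fp}(\widehat{N}_j)$ the escape cone $B^{\delta}(N_j)$ points ``outward'' (toward $\partial\bR^2_{>0}$); the complementary inward half-cone is where the admissible velocities $K(N_j)$ must live, because $K(N_j)$ avoids the interior of $B^{\delta}(N_j)$. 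The face-compatibility clause $K(M)\subseteq K(N)$ whenever $N$ is a face of $M$ is exactly what I need to glue the arcs from adjacent two-dimensional regions to the shared one-dimensional boundary strip without creating a ``leak'' at the seam: on the one-dimensional cone the admissible velocity set is the larger cone $K(N)$, which still avoids $B^{\delta}(N)^{\circ}$, so the concatenated curve remains a barrier across the junction.

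With the barrier curves in hand, I would define the compact regions $\cR_c$ as the closed bounded components of $\bR^2_{>0}$ cut off by $\Gamma_c$ together with an inner barrier near the origin built from $\mathit{fat}_{\fp}(\b{0})$; forward invariance then follows from the standard Nagumo-type/tangent-cone argument for differential inclusions, namely that at every boundary point the entire admissible cone $\b{F}(\b{x})\subseteq K(N)$ points into $\cR_c$ (equivalently, never into $B^{\delta}(N)^{\circ}$), so no absolutely continuous solution of the inclusion can cross $\Gamma_c$ outward. Nesting and the covering property are obtained by letting $c$ range over a sequence tending to the appropriate limit, since the construction is monotone in $c$ and each point of $\bR^2_{>0}$ eventually lies inside $\cR_c$. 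Persistence and boundedness are then immediate corollaries: every trajectory is trapped in some $\cR_c$, which is compact and disjoint from $\partial\bR^2_{>0}$.

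The main obstacle I expect is the boundary analysis at the seams between regions and, more delicately, the behavior as the barrier curve passes near the diagonal $y=x$ and near the origin, where the escape cones degenerate (becoming the full cones $\pm\bR^2_{>0}$ rather than half-lines). There the candidate barrier must turn ``corners,'' and I must verify that the inward-pointing property survives at these non-smooth junction points and in the transition region $\mathit{fat}_{\fp}(\b{0})$; this requires checking that the escape cones of adjacent regions overlap enough (again via $K(M)\subseteq K(N)$) that a consistent choice of transversal arc exists, and that the resulting closed curve genuinely encloses a compact region bounded away from both axes. Making the Nagumo argument rigorous for merely absolutely continuous solutions of a set-valued inclusion, rather than for classical $C^1$ trajectories, is the second technical point, but it reduces to the fact that $\b{F}$ is piecewise constant with convex cone values, so the viability/invariance criterion can be applied on each region separately and patched along the finitely many boundary strips.
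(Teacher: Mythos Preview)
Your outline has the right shape---build closed barrier curves from the escape-direction condition and invoke a viability argument---but there is a concrete error and a missing construction. The claim that in $\log$-coordinates ``$\cN$-cone inclusions become genuinely piecewise constant'' is wrong in the way that matters: the velocity cones $K(N)$ are already constant on each $\mathit{fat}_{\fp}(N)$ in the \emph{original} coordinates, and under $\b{X}=\log\b{x}$ one has $\dot X_i=\dot x_i/x_i$, so the pushed-forward velocity set becomes position-dependent. The whole point of using a single barrier arc across a region is that $K(N)$ is a fixed cone there; this is lost in $\log$-coordinates. The paper therefore works in $\bR^2_{>0}$ and takes the barrier $\partial\cR$ to be a polygon whose edges are straight line segments, each with inward normal a supporting direction of the relevant $K(N_i)$; an arc that is straight in $\log$-coordinates would not satisfy a uniform normal condition against a fixed cone.

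The larger gap is that you do not say how arcs from neighboring regions are made to meet so as to form a \emph{closed} curve; this, rather than the seam or Nagumo analysis, is the main technical content. The paper supplies two lemmas you are missing: through each point of $\mathit{fat}_{\fp}(N_i)$ passes a scaffold curve $S_i(t)=(t^{m_1},\alpha t^{m_2})$ eventually contained in that region (\cref{curves}); and a line in a fixed direction meeting two such scaffold curves transversally can be slid monotonically along both out to infinity (\cref{toftau}). One places polygon vertices on the $S_i$, chooses each edge direction as a line separating $K(N_q)$ from $B^{\delta}(N_q)$ (a lengthy case analysis, \cref{cl1} and \cref{cl2}, treats the regions around $y=x$, the axes, and the diagonal at infinity separately), and then uses the sliding lemma to adjust every edge so that adjacent ones share an endpoint; pushing all vertices outward along the $S_i$ simultaneously yields the nested exhausting family. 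The face condition $K(M)\subseteq K(N)$ is used so that both edges meeting at a vertex support the \emph{same} cone (vertices lie in a single $\mathit{fat}_{\fp}(N_i)$), not to patch across seams as you suggest. Finally, your proposed ``inner barrier built from $\mathit{fat}_{\fp}(\b{0})$'' cannot work: no cone condition is imposed on $\mathit{fat}_{\fp}(\b{1})=\exp(\mathit{fat}_{\fp}(\b{0}))$, so that set must lie strictly inside the invariant region; the paper instead closes the polygon with segments of $\{x=x^*\}$ and $\{y=y^*\}$ joining two polygonal lines $H_1$ (toward the origin) and $H_2$ (toward infinity).
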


We will prove this theorem by constructing a forward invariant region with a polygonal border. We do this by choosing line segments that the tropically endotactic differential inclusion cannot cross, and connecting these segments to form the border of the forward invariant region. Finally, we show that there is an exhaustive nested family of such regions. This is achieved by ``expanding" the first region outward to cover $\bR^2_{>0}$.

In order to prove \cref{thethm}, we will first need to prove two lemmas. The first lemma allows us to pick curves through points which will be used as scaffolding for the construction of a forward invariant region, as in \cref{labeling} and \cref{labelingh2}. These curves will also give a framework to expand the forward invariant region and show there exists an exhaustive family of such regions.

\begin{lemma}\label{curves}
	Let $\widehat{N} \neq \b{0}$ be a cone of the fan $\widehat{\cN}$ in $\bR^2$ and $N = \exp(\widehat{N})$. For any point $\b{x}\in \mathit{fat}_{\fp}(N)$, there exists some curve
	\begin{equation}\label{curveform}
	S(t) = \{(t^{m_1},\alpha t^{m_2})\}\quad m_1,m_2 \in \bR,\,\alpha >0
	\end{equation}
	or 
	\begin{equation}\label{curveform2}
	S(t) = \{(\alpha t^{m_1}, t^{m_2})\}\quad m_1,m_2 \in \bR,\,\alpha >0
	\end{equation}
	such that for some $t_0>1$, we have $\{S(t)|t>t_0\}\subset \mathit{fat}_{\fp}(N)$ and $\b{x} = S(t_0)$.
\end{lemma}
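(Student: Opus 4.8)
The plan is to pass to logarithmic coordinates, where both the regions $\mathit{fat}_{\fp}(\widehat{N})$ and the admissible curves become easy to describe. Writing $\b{X} = \log(\b{x})$ and $u = \log(t)$, a curve of the form \eqref{curveform} becomes the straight ray
\[
\log(S(t)) = (0, \log\alpha) + u\,(m_1, m_2), \qquad u > 0,
\]
which passes through the vertical axis $\{X_1 = 0\}$ at $u = 0$; similarly a curve of the form \eqref{curveform2} is a ray through the horizontal axis $\{X_2 = 0\}$ at $u = 0$. Thus proving the lemma amounts to producing a direction $\b{d} = (m_1, m_2) \neq \b{0}$ and a value $u_0 > 0$ such that (P1) the ray $\{\b{X} + s\b{d} : s > 0\}$ lies in $\mathit{fat}_{\fp}(\widehat{N})$, and (P2) the point $\b{X} - u_0\b{d}$ lies on one of the coordinate axes of $\bR^2$. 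Condition (P2) fixes $\log\alpha$ and $t_0 = e^{u_0} > 1$ and selects which of the two forms to use, while (P1) guarantees that $\{S(t):t>t_0\} \subset \mathit{fat}_{\fp}(N)$.

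For the direction I distinguish the dimension of $\widehat{N}$. If $\widehat{N}$ is two-dimensional, then $\mathit{fat}_{\fp}(\widehat{N})$ is the sector $\widehat{N}$ with the strips around its two edges and the region $\mathit{fat}_{\fp}(\b{0})$ deleted, so every point of it has distance at least $|\log(\fp)|$ from both edges and therefore lies in the interior of $\widehat{N}$. I take $\b{d} = \b{X}$, the radial direction; scaling $\b{X}$ by $\lambda \geq 1$ only increases its distance to each edge and its distance to the origin, so the ray remains in $\mathit{fat}_{\fp}(\widehat{N})$, giving (P1). If instead $\widehat{N}$ is one-dimensional, a ray generated by $\b{r}$ with $\|\b{r}\|=1$, then $\mathit{fat}_{\fp}(\widehat{N})$ is the strip of half-width $|\log(\fp)|$ about $\b{r}$ with $\mathit{fat}_{\fp}(\b{0})$ removed, and the only direction along which one can recede to infinity inside this strip is $\b{d} = \b{r}$ itself; since translation by $\b{r}$ preserves distance to the ray, $\{\b{X} + s\b{r}\}$ stays in the strip, and it remains only to check that it does not re-enter $\mathit{fat}_{\fp}(\b{0})$.

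To obtain (P2) I use the signs of the coordinates. In the two-dimensional case $\b{d} = \b{X}$ gives $\b{X} - u_0\b{d} = (1-u_0)\b{X}$, which lies on an axis at $u_0 = 1$. In the one-dimensional case, writing $\b{X} = s_0 \b{r} + \b{w}$ with $\b{w} \perp \b{r}$, the fact that $\b{X}$ lies in the strip away from the origin forces $s_0 = \b{X}\cdot\b{r} > 0$, hence $X_1 r_1 + X_2 r_2 > 0$ and at least one of $X_1 r_1,\, X_2 r_2$ is positive. If $X_1 r_1 > 0$ I place $\b{X} - u_0\b{r}$ on $\{X_1 = 0\}$ by taking $u_0 = X_1/r_1 > 0$ and use form \eqref{curveform}; if $X_2 r_2 > 0$ I use $\{X_2 = 0\}$, $u_0 = X_2/r_2 > 0$, and form \eqref{curveform2}. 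In either case $u_0 > 0$ as required, and $\alpha$ is recovered as the exponential of the nonzero coordinate of $\b{X} - u_0\b{d}$, so $\alpha > 0$.

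The one remaining, and main, difficulty is showing in the one-dimensional case that the forward ray $\b{X} + s\b{r}$ ($s>0$) does not re-enter $\mathit{fat}_{\fp}(\b{0})$. The essential fact is that for two distinct one-dimensional cones $\widehat{N}_j, \widehat{N}_k$ the intersection of their strips is contained in $\mathit{fat}_{\fp}(\b{0})$, so a point of the strip about $\b{r} = \b{r}_j$ lying outside $\mathit{fat}_{\fp}(\b{0})$ has distance strictly greater than $|\log(\fp)|$ to every other ray $\widehat{N}_k$. For each such $k$ the function $s \mapsto \mathrm{dist}(\b{X}+s\b{r},\, \widehat{N}_k)$ is convex (distance to a convex set precomposed with an affine map) and tends to infinity as $s\to\infty$ since $\b{r}\neq\b{r}_k$; hence it suffices to verify that its right derivative at $s=0$ is nonnegative, which follows from the geometric fact that $\b{X}$, lying in the strip about $\b{r}_j$ but outside $\mathit{fat}_{\fp}(\b{0})$, lies on the same side of the line spanned by $\widehat{N}_k$ as the ray $\widehat{N}_j$. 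Convexity then propagates this to all $s \geq 0$, so the distance stays above $|\log(\fp)|$ and the ray avoids every competing strip, completing (P1). The two-dimensional case needs no such argument, as the radial ray invokes the star-shapedness of $\mathit{fat}_{\fp}(\b{0})$ with respect to the origin directly.
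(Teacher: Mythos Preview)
Your argument is correct and follows essentially the same route as the paper: pass to logarithmic coordinates, where the curves become affine rays, and treat one- and two-dimensional $\widehat{N}$ separately, in the one-dimensional case taking the ray direction $\b{r}$ itself. The one substantive difference is in the two-dimensional case: the paper describes $\mathit{fat}_\fp(\widehat{N})$ explicitly as a translated cone $\b{X}^* + \widehat{N}$ (with $\b{X}^*$ the apex where the inner boundaries of the two adjacent strips meet) and takes as direction the vector from $\b{X}^*$ to $\b{X}$, whereas you simply use the radial direction $\b{X}$ from the origin. Your choice is slightly cleaner, since distances from $\lambda\b{X}$ to rays through the origin scale by $\lambda$, so invariance under dilation is immediate; the paper's choice instead makes the shape of $\mathit{fat}_\fp(\widehat{N})$ explicit. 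Your convexity argument for monotonicity of $s\mapsto\mathrm{dist}(\b{X}+s\b{r},\widehat{N}_k)$ is more careful than the paper's one-line assertion, though note that your ``geometric fact'' as phrased covers only the case where the nearest point of $\widehat{N}_k$ to $\b{X}$ lies in the open ray; when that nearest point is the origin the derivative equals $\b{r}\cdot\b{X}/\|\b{X}\|=s_0/\|\b{X}\|>0$ directly.
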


\begin{proof}
	This lemma must be proved separately for one and two dimensional cones $\widehat{N}$.
	
	Let $N = \exp(\widehat{N})$ be one dimensional, and let $\b{m} =(m_1,m_2)$ be the direction of the ray $\widehat{N}$. We have that
	\[
	\mathit{fat}_{\fp}(\widehat{N})\subset \left\{\b{X}|dist(\b{X},\widehat{N})<|\log(\fp)|\right\}
	\]
	and any point $\b{X} \in \left\{\b{X}|dist(\b{X},\widehat{N})<|\log(\fp)|\right\}$ lies on a line $(m_1t,m_2t)+\beta[\b{m}^{\perp}]_n$, \\$|\beta|<|\log(\fp)|$. Furthermore, if $dist(\b{X},\widehat{M})>|\log(\fp)|$ for $M \neq N \in \cN$, $\|\b{Y}\|>\|\b{X}\|$ and $\b{Y}$ lies on the same line $(m_1t,m_2t)+\beta[\b{m}^{\perp}]_n$, then $dist(\b{Y},\widehat{M})$ increases with $\|\b{Y}\|$, so $dist(\b{Y},\widehat{M})>|\log(\fp)|$. Therefore, $\mathit{fat}_{\fp}(\widehat{N})$ is a union of affine half-lines. If $\b{x}\in \mathit{fat}_{\fp}(N)$ and $m_1 \neq 0$, then there is one such affine half-line $\widehat{S}(t)$ such that $\log(\b{x}) = \widehat{S}(t_0)$ and $S(t) = \exp(\widehat{S}(t))$ can be reparameterized to satisfy \cref{curveform}. If $m_1 = 0$, $\widehat{S}(t)$ can be chosen such that $S(t)$ can be reparameterized to satisfy \cref{curveform2}.
	
	Next, let $\widehat{N}$ be a solid cone, and let $\b{\mu}_1$ be the direction of one face and $\b{\mu}_2$ the direction of the other. Let $\widehat{M}^1$ and $\widehat{M}^2$ be the one dimensional faces of $\widehat{N}$, and $\b{X}$ be the unique intersection point $\mathit{fat}_{\fp}(\widehat{M}^1)\cap \mathit{fat}_{\fp}(\widehat{M}^2)$. Then
	\[
	\mathit{fat}_{\fp}(\widehat{N}) = \{\b{X} + t(\lambda \b{\mu}_1 + (1-\lambda)\b{\mu}_2)|\lambda\in [0,1]\}
	\]
	Therefore, if $\b{x}\in \mathit{fat}_{\fp}(N)$, then $\log(\b{x}) \in \b{X} + t(\lambda \b{\mu}_1 + (1-\lambda)\b{\mu}_2) = \b{X} + t \b{\mu} \eqqcolon \widehat{S}(t)$ for some $\lambda \in [0,1]$. Then again the curve $S(t) = \exp(\widehat{S}(t))$ satisfies the condition in the lemma.
\end{proof}


In the following lemma, we show that a line that intersects two curves of the form \cref{curveform} transversally will also intersect both of these curves at points further along (as $t$ increases), as in \cref{toftaufig}. Later, when we construct a forward invariant region for a tropically endotactic differential inclusion in the proof of \cref{thethm}, we will construct a set of lines that solutions of the differential inclusion cannot cross. This lemma will allow us to arrange these lines so that they form the border of a compact forward invariant polygon. Furthermore, this lemma will ensure that, once we have built one forward invariant polygon with sides along lines of the form $\{S(\tau_1) + s\b{v}|s \in \bR\}$, we can expand it into a continuous family of forward invariant polygonal regions  with sides along lines of the form $\{S(\tau^*) + s\b{v}|s \in \bR\}$, where $\tau^*>\tau_1$ (see \cref{toftaufig}).

\begin{lemma}\label{toftau}
	Let $S_1(\tau) = (\tau^{\tilde{m}_1},\tilde{\alpha} \tau^{\tilde{m}_2})$ and $S_2(t)=(t^{m_1},\alpha t^{m_2})$, $t,\tau>1$, such that $\mathit{sgn}(\max\{m_1,m_2\}) = \mathit{sgn}(\max\{\tilde{m}_1,\tilde{m}_2\}) \neq 0$.
	\footnote{
	Informally, $\mathit{sgn}(\max\{m_1,m_2\}) = \mathit{sgn}(\max\{\tilde{m}_1,\tilde{m}_2\})$ means that either both curves $S_1$ and $S_2$ or neither have logarithmic images in the third quadrant (with some exceptions along the boundary of the third quadrant).	
	}
	Assume there is some $\b{v} \in \bR^2$ such that 
	\[
	\b{n}(S_1(\tau)) \cdot \b{v} \neq 0,\;\forall \,\tau\geq \tau_0 \quad \& \quad \b{n}(S_2(t)) \cdot \b{v} \neq 0,\;\forall \,t\geq t_0
	\]
	where $\b{n}(S_i(t))$ is the normal to $S_i$ at $t$.\footnote{
	Note, $\b{n}(S_i(t)) \cdot \b{v} \neq 0,\;\forall t\geq t_0$ means that $\b{v}$ is not tangent to the curve $S_i(t)$ for $t\geq t_0$.
}
	Let $L_{\tau}(s) = S_1(\tau) + s\b{v}$, consider the half-line
	\[L_{\tau} = \{L_{\tau}(s)|s>0\} \]
	and assume that there is some $\tau_1\geq \tau_0$ such that $L_{\tau_1}\cap S_2(t) \neq \emptyset$, and let $S_2(t_1) = L_{\tau_1}(s_0)$. 
		
	Then, there exists a continuous and invertible function $\gamma(\tau)$ with $\gamma(\tau_1) = t_1$ such that $S_2(\gamma(\tau)) \in L_{\tau}$, and $\gamma(\tau)$ is strictly monotonically increasing. Furthermore, $\lim_{\tau\rightarrow \infty}\gamma(\tau)  = \infty$.
\end{lemma}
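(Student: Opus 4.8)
The plan is to collapse the two-dimensional incidence condition ``$S_2(t)$ lies on the line through $S_1(\tau)$ in direction $\b v$'' into a single scalar equation and then invert it. Fix a nonzero normal $\b v^{\perp}$ to $\b v$ and define the scalar height functions $h_1(\tau) = S_1(\tau)\cdot \b v^{\perp}$ and $h_2(t) = S_2(t)\cdot \b v^{\perp}$. A point lies on the full line $\{S_1(\tau)+s\b v \mid s\in\bR\}$ exactly when its $\b v^{\perp}$-coordinate equals $h_1(\tau)$, so $S_2(t)$ meets this line iff $h_2(t)=h_1(\tau)$. The hypothesis $\b n(S_i)\cdot\b v\neq 0$ says precisely that $\b v$ is never tangent to $S_i$, equivalently $h_i'(\cdot)=S_i'(\cdot)\cdot\b v^{\perp}\neq 0$ on the relevant ray; since $h_i'$ is continuous and nonvanishing it has constant sign, so each $h_i$ is a strictly monotone, hence injective, continuous function.

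First I would set $\gamma := h_2^{-1}\circ h_1$ on the set of $\tau$ for which $h_1(\tau)$ lies in the range of $h_2$. This is continuous and invertible as a composition of continuous strictly monotone maps, and the given intersection $S_2(t_1)=L_{\tau_1}(s_0)$ yields $h_2(t_1)=h_1(\tau_1)$, i.e.\ $\gamma(\tau_1)=t_1$; by construction $S_2(\gamma(\tau))$ lies on the line through $S_1(\tau)$ in direction $\b v$. The implicit relation also gives $\gamma'(\tau)=h_1'(\tau)/h_2'(\gamma(\tau))$, nonvanishing of constant sign, confirming strict monotonicity. To upgrade ``on the line'' to ``on the half-line $L_\tau$ with $s>0$'' I would track the continuous scalar $s(\tau)$ defined by $S_2(\gamma(\tau))-S_1(\tau)=s(\tau)\b v$, with $s(\tau_1)=s_0>0$; its sign can change only where $S_1(\tau)=S_2(\gamma(\tau))$, i.e.\ where the two monomial curves cross. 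Two curves of the form \cref{curveform} meet in at most one point for $\tau>1$ (eliminating the parameters leaves a single power equation), and the curves become asymptotically ordered along $\b v$, so $s$ stays positive on $[\tau_1,\infty)$.

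The substance of the lemma is the last clause: that $\gamma$ is the \emph{increasing} branch and that $\gamma(\tau)\to\infty$. Here I would invoke the sign condition. The dominant monomial in each $h_i$ as the parameter tends to infinity is governed by $\max\{m_1,m_2\}$, resp.\ $\max\{\tilde m_1,\tilde m_2\}$, so $\mathit{sgn}(\max\{m_1,m_2\})=\mathit{sgn}(\max\{\tilde m_1,\tilde m_2\})\neq 0$ forces $h_1$ and $h_2$ to share limiting behavior: either both diverge (positive $\max$, both curves leaving every compact set) or both tend to $0$ (negative $\max$, both approaching the origin). Combined with the existence of the transversal intersection at $\tau_1$ and the half-line orientation, this pins the constant sign of $\gamma'$ to be positive and shows $h_1(\tau)$ eventually exhausts the range of $h_2$, forcing $\gamma(\tau)\to\infty$. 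Concretely, in the representative case $\b v=(0,1)$ one has $h_2(t)=t^{m_1}$ and $h_1(\tau)=\tau^{\tilde m_1}$, the incidence $t^{m_1}=\tau^{\tilde m_1}$ with $t,\tau>1$ forces $\mathit{sgn}(m_1)=\mathit{sgn}(\tilde m_1)$, and $\gamma(\tau)=\tau^{\tilde m_1/m_1}$ is increasing and unbounded.

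The main obstacle is exactly this sign bookkeeping. Writing $S_i=(S_i^{x},S_i^{y})$, the height $h_i$ is a \emph{difference} of two monomials, so which one dominates—and hence the sign of $h_i'$ and the limit of $h_i$—depends jointly on $\b v$, on the exponents, and on the coefficient $\alpha$, not on $\max\{m_1,m_2\}$ alone. I expect the careful part to be showing that, \emph{given that a transversal intersection on the positive half-line already exists at $\tau_1$}, the only configuration consistent with the sign condition is the one in which $h_1$ sweeps monotonically across the entire range of $h_2$ in the orientation that makes $\gamma$ increasing; ruling out the spurious decreasing, finite-domain branch is where the hypotheses (the intersection at $\tau_1$ and the $\mathit{sgn}(\max)$ condition together) must be used most carefully.
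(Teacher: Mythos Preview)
Your reduction to a scalar equation via projection onto $\b v^{\perp}$ is exactly what the paper does: it writes the incidence as $g(\tau,t)=v_1(\alpha t^{m_2}-\tilde\alpha\tau^{\tilde m_2})-v_2(t^{m_1}-\tau^{\tilde m_1})=0$, which is precisely $h_1(\tau)-h_2(t)=0$ for $\b v^{\perp}=(-v_2,v_1)$, and applies the implicit function theorem. The extension of $\gamma$ to $[\tau_1,\infty)$ is likewise done by case analysis on $\mathit{sgn}(\max\{m_1,m_2\})$: one shows that $h_2(\gamma(\tau))=h_1(\tau)$ keeps $\gamma$ bounded on any finite $\tau$-interval (this is where the paper, like you, has to treat $m_1=m_2$ and the negative-$\max$ case separately). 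The claim $\gamma(\tau)\to\infty$ is then obtained by symmetry in $t$ and $\tau$, applying the same argument to $\gamma^{-1}$.

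The gap in your proposal is the step you yourself flag: showing $\gamma'>0$. You defer it to asymptotic sign-bookkeeping and concede it is delicate. The paper sidesteps this entirely with a short geometric argument you are missing. The two curves (for large parameter) bound a simply connected region in $\bR^2_{>0}$; taking both normals on the right of the positive tangent direction, one of them is an inward normal to this region and the other is outward, because the two curves have opposite orientation along its boundary. Since the half-line $L_\tau$ starts on $S_1$ and ends on $S_2$, it enters the region across $S_1$ and exits across $S_2$, forcing $\b v\cdot\b n(S_1(\tau))>0$ and $\b v\cdot\b n(S_2(t))>0$ simultaneously, hence
\[
\gamma'(\tau)=\frac{\b v\cdot\b n(S_1(\tau))}{\b v\cdot\b n(S_2(\gamma(\tau)))}>0.
\]
This same enter/exit picture also disposes of the half-line issue ($s>0$) that you handle separately via an intersection-count argument. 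Your asymptotic route can in principle be completed, but the geometric observation is both cleaner and what actually makes the paper's proof go through without the sign morass you anticipate.
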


\begin{figure}
	\begin{center}
		\begin{tikzpicture}[scale = 0.6]
		\draw[->] (0,0) -- (0,7);
		\draw[->] (0,0) -- (7,0);
		\draw[color = blue, very thick] plot[domain = 7:0] (0.1*\x^2,\x);
		\draw[color = blue, very thick] plot[domain = 7:0] (\x,0.1*\x^2);
		\node[color = blue] at (7,5.3) {$S_1(\tau)$};
		\node[color = blue] at (5.6,7) {$S_2(t)$};
		\fill (5,2.5) circle (2pt) node[right] {$S_1(\tau_1)$};
		\fill (2.5,5) circle (2pt) node[right] {$S_2(t_1)$};
		\draw[-latex,thick] (5,2.5)--(4.5,3) node[pos = 1, above] {$\b{v}$};
		\draw[color = dgreen] (5,2.5)--(2.5,5);
		\draw[dashed, color = dgreen] (5.5,2)--(2,5.5) node[pos = 1,above] {$L_{\tau_1}$};
		\fill (4,1.6) circle (2pt) node[right] {$S_1(\tau^*)$};
		\fill (1.6,4) circle (2pt) node[right] {$S_2(\gamma(\tau^*))$};
		\draw[-latex,thick] (4,1.6)--(3.5,2.1) node[pos = 1, above] {$\b{v}$};
		\draw[color = dgreen] (4,1.6)--(1.6,4);
		\draw[dashed, color = dgreen] (4.5,1.1)--(1.1,4.5) node[pos = 0,below] {$L_{\tau^{*}}$};
		\draw[-latex,thick, color = nicepink] (3,0.9)--(2.7,1.4) node[pos = 0.5, above left] {$\b{n}(S_{1}(\tau))$};
		\draw[-latex,thick, color = nicepink] (0.9,3)--(0.4,3.3) node[pos = 1, left] {$\b{n}(S_{2}(t))$};
		\end{tikzpicture}
	\end{center}
	\caption{Given the segment $L_{\tau_1}$ intersecting two curves $S_1(\tau)$ and $S_2(t)$ transversally (as in the hypothesis of \cref{toftau}), this lemma insures that for $\tau^*>\tau_1$, the segment $L_{\tau^*}$, which is parallel to $L_{\tau_1}$, also intersects the two curves transversally at the points $S_1(\tau^*)$ and $S_2(t^*)$ respectively, where $t^*>t_1$. Notice that as $t^*,\tau^*$ increase, the segment moves closer to $\partial \bR^2_{>0}$.}\label{toftaufig}
\end{figure}
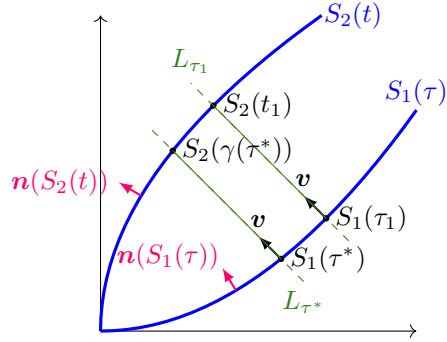

\begin{proof}
	If $S_2(t) \in L_{\tau}$, then
	\[
	(t^{m_1},\alpha t^{m_2})= (\tau^{\tilde{m}_1}+sv_1,\tilde{\alpha}\tau^{\tilde{m}_2}+sv_2)
	\] 
	for some $s$, or rewritten,
	\begin{align*}
	t^{m_1} - sv_1 & = \tau^{\tilde{m}_1}\\
	\alpha t^{m_2} - sv_2 &= \tilde{\alpha}\tau^{\tilde{m}_2}
	\end{align*}
	Because one of $v_1$, $v_2$ is non-zero, this is equivalent to 
	\begin{equation}
	g(\tau,t) = v_1(\alpha t^{m_2} - \tilde{\alpha}\tau^{\tilde{m}_2}) - v_2(t^{m_1} - \tau^{\tilde{m}_1}) = 0
	\end{equation}
	We know a solution exists at $(\tau_1,t_1)$. Furthermore,
	\[
	\frac{\partial g}{\partial t} = v_1\alpha m_2 t^{m_2-1} - v_2m_1t^{m_1-1} = \b{v} \cdot \left(\alpha m_2 t^{m_2-1}, - m_1t^{m_1-1}\right) = \b{v} \cdot \b{n}(S_2(t))
	\]
	and so $\frac{\partial g}{\partial t}(t) \neq 0$ for $t\geq t_1$. Then the implicit function theorem tells us there is a continuous, differentiable function $\gamma$ such that $g(\tau,\gamma(\tau)) = 0$ for $\tau \in [\tau_1,\tau^*)$ for some $\tau^*$. Likewise the implicit function theorem proves the existence of $\gamma^{-1}$ locally.
	
	Now, let $t = \gamma(\tau)$. Using the chain rule,
	\[
	\gamma'(\tau) =  \frac{-\nicefrac{\partial g}{\partial \tau}}{\nicefrac{\partial g}{\partial t}} = \frac{\b{v} \cdot \left(\tilde{\alpha} \tilde{m}_2 \tau^{\tilde{m}_2-1}, - \tilde{m}_1\tau^{\tilde{m}_1-1}\right)}{\b{v} \cdot \left(\alpha m_2 t^{m_2-1}, - m_1t^{m_1-1}\right)} = \frac{\b{v}\cdot \b{n}(S_1(\tau))}{\b{v} \cdot \b{n}(S_2(t))}
	\]
	 where both normal vectors are on the right-hand side of their respective curve with respect to the positive tangent direction. 
	 Notice that together, these curves form the boundary of a simply connected region in $\bR^2_{>0}$. The vector $\b{n}(S_1(t))$ is an inward normal to this region, while $\b{n}(S_2(t))$ is an outward normal (because these two curves have opposite orientation along the boundary of this region). The line $L_{\tau} = \{S_1(\tau) + s\b{v}|s>0\}$ enters this region across $S_1(t)$ and exits across $S_2(t)$. This gives us the fact that
	 \[
	 \b{n}(S_1(\tau)) \cdot \b{v} > 0,\;\forall \,\tau\geq \tau_0 \quad \& \quad \b{n}(S_2(t)) \cdot \b{v} > 0,\;\forall \,t\geq t_0
	 \] 
	 Therefore, $\gamma'(\tau) > 0$ for $\tau>\tau_1$.
	 
	 To show that $\gamma$ exists for $[\tau_1,\infty)$, we must show that if $\gamma$ exists for $[\tau_1,\tau^*)$, then  $\gamma(\tau^*)$ exists and is finite. To do this, it is sufficient to show that $\gamma(\tau)$  is bounded in $[\tau_1,\tau^*)$.
	 On the interval $[\tau_1,\tau^*)$, we have that
	 \[
	 v_1\alpha \gamma(\tau)^{m_2} - v_2 \gamma(\tau)^{m_1} = v_1 \tilde{\alpha}\tau^{\tilde{m}_2} - v_2 \tau^{\tilde{m}_1}
	 \]
	 We must now treat two different cases.
	 
	 If $\mathit{sgn}(\max\{m_1,m_2\}) = \mathit{sgn}(\max\{\tilde{m}_1,\tilde{m}_2\})>0$, we have some $M_1$ such that  
	 \[
	 \left| v_1 \tilde{\alpha}\tau^{\tilde{m}_2} - v_2 \tau^{\tilde{m}_1}\right| < M_1
	 \]
	 for $\tau \in [\tau_1,\tau^*]$. Therefore, we have that 
	 \[
	 \left| v_1\alpha \gamma(\tau)^{m_2} - v_2 \gamma(\tau)^{m_1} \right| < M_1
	 \]
	 for $t \in [\tau_1,\tau^*)$. If $m_1 \neq m_2$, then $\gamma(\tau)$ must be bounded on $[\tau_1,\tau^*)$ because if not, the difference $\left| v_1\alpha \gamma(\tau)^{m_2} - v_2 \gamma(\tau)^{m_1} \right|$ could not be bounded.
	 If $m_1 = m_2$, we must have $\alpha \neq \frac{v_2}{v_1}$. This is because if $m_1 = m_2$, then $\b{n}(S_1(\tau))$ is parallel to $(\alpha,-1)$. Therefore, $\alpha = \frac{v_2}{v_1}$ would violate the assumption that $\b{v} \cdot \b{n}(S_1(\tau))  \neq 0$. Therefore $| v_1\alpha \gamma(\tau)^{m_2} - v_2 \gamma(\tau)^{m_1}| = |\gamma(\tau)^{m_2}(v_1\alpha - v_2)| < M_1$ again implies that $\gamma(\tau)$ is bounded in $[\tau_1,\tau^*)$.
	 
\medskip
	 
	 If $\mathit{sgn}(\max\{m_1,m_2\}) = \mathit{sgn}(\max\{\tilde{m}_1,\tilde{m}_2\}) < 0$, there are three sub-cases to consider. Again, we have that 
	 \[
	 v_1\alpha \gamma(\tau)^{m_2} - v_2 \gamma(\tau)^{m_1} = v_1 \tilde{\alpha}\tau^{\tilde{m}_2} - v_2 \tau^{\tilde{m}_1}
	 \]
	 and we must in all cases show that $v_1 \tilde{\alpha}\tau^{\tilde{m}_2} - v_2 \tau^{\tilde{m}_1}$ cannot be $0$ on the compact interval $[\tau_1,\tau^*]$. 
	 
	 If $\tilde{m}_1 = \tilde{m}_2$, we again obtain $\tilde{\alpha} \neq \frac{v_2}{v_1}$ and can conclude $v_1 \tilde{\alpha}\tau^{\tilde{m}_2} - v_2 \tau^{\tilde{m}_1} \neq 0$ for $\tau \in [\tau_1,\tau^*]$.
	 
	 If $0>\tilde{m}_1 > \tilde{m}_2$ then we recall
	 \[
		0 <  \b{n}(S_1(\tau)) \cdot \b{v} = v_1\tilde{\alpha}\tilde{m}_2\tau^{\tilde{m}_2} - v_2 \tilde{m}_1\tau^{\tilde{m}_1}
	 \]
	 which implies that
	 \[
	 v_2\tau^{\tilde{m}_1} > v_1\tilde{\alpha}\frac{\tilde{m}_2}{\tilde{m}_1} > v_1\tilde{\alpha}\tau^{\tilde{m}_2}
	 \]
	 and so 
	 \[v_1 \tilde{\alpha}\tau^{\tilde{m}_2} - v_2 \tau^{\tilde{m}_1} \neq 0\]
	 for $\tau \in [\tau_1,\tau^*]$. 
	 
	 The last case, that $0> \tilde{m}_2 > \tilde{m}_1$ is analogous. Finally, we have that there is some $\epsilon$ such that
	 \[
	 |v_1\alpha \gamma(\tau)^{m_2} - v_2 \gamma(\tau)^{m_1}| > \epsilon
	 \]
	 for $\tau \in [\tau_1,\tau^*)$. Once more, if $m_1 = m_2$, then $\alpha \neq \frac{v_2}{v_1}$, and we can conclude that $\gamma(\tau)$ is bounded in $[\tau_1,\tau^*)$. This, along with monotonicity of $\gamma(\tau)$, allows us to conclude that $\lim_{\tau\rightarrow \tau^*}\gamma(\tau)=\gamma(\tau^*)$ exists and is finite. Therefore, whenever $\gamma(\tau)$ can be defined on $[\tau_1,\tau^*)$, it can be extended to $[\tau_1,\tau^*]$.
	 
	 If we assume that $\gamma(\tau)$ cannot be defined on $[\tau_1,\infty)$, then there exists some interval $[\tau_1,\tau^{**})$ on which $\gamma(\tau)$ exists but cannot be extended to $\tau^{**}$. This is a contradiction, so we conclude that $\gamma(\tau)$ exists on $[\tau_1,\infty)$. Notice that the lemma is symmetric in $\tau$ and $t$, and so we have also that $\gamma(\tau) \rightarrow \infty$ as $\tau \rightarrow \infty$ (by repeating the above arguments for $\gamma^{-1}(t)$ to show that $\gamma^{-1}(t)$ exists on $[t_1,\infty)$).
\end{proof}

\bigskip

\begin{proof}[Proof of \Cref{thethm}]
Let $\cN$ be a complete fan, and $\cK(\cN)$ an $\cN$-cone differential inclusion satisfying the hypothesis of \cref{tropendo} such that $\b{\dot{x}} \in \b{F}(\b{x})$ is embedded in $\cK(\cN)$. We will construct a family of nested, forward invariant regions for $\cK(\cN)$. Clearly, any region which is forward invariant under $\cK(\cN)$ is forward invariant under any differential inclusion that is embedded in $\cK(\cN)$. 

The regions constructed will be compact, will not intersect $\partial \bR^2_{> 0}$, and will cover $\bR^2_{>0}$. We construct the border of one such region $\cR$ and show that a nested family exists, all other members of which properly contain $\cR$. The construction proceeds from one region $\mathit{fat}_{\fp}(N)$ to the next in clockwise manner about the point $\b{1} = (1,1)$, so we number the regions $N_i \in \cN$ clockwise (excluding $\mathit{fat}_{\fp}(\b{1})$ from this numbering).

We will build $\cR$ to be a (not necessarily convex) polygon together with its interior. Each side of the polygon will be chosen so that if $\b{n}_j$ is the inward normal to that side and the side intersects $\mathit{fat}_{\fp}(N_i)$, then $\b{w} \cdot \b{n}_j \geq 0$ for all $\b{w} \in K(N_i)$. That is, the sides will be chosen to be supporting lines of the cones $K(N_i)$. Each corner of the polygon $\partial \cR$ will be chosen to be in the interior of a region $\mathit{fat}_{\fp}(N_i)$, so any two intersecting edges of $\partial \cR$ are supporting lines of the single cone $K(N_i)$. An application of Theorem 5.2.7 of \cite{diffincbook} shows that $\cR$ constructed in this manner is forward invariant for $\cK(\cN)$.

According to \cref{curves}, we may choose the vertices of the polygon $\partial \cR$ on curves $S_i(t) = \{(t^{m_1},\alpha t^{m_2})\} \subseteq \mathit{fat}_{\fp}(N_i)$, $m_1,m_2 \in \bR$, $\alpha>0$, for $t$ large. In our construction, we will choose these curves first, and choose one point on each curve sequentially following line segments in the direction of supporting lines of the appropriate cones $K(N_i)$. These line segments then form the edges of $\partial \cR$. We number these curves clockwise with respect to a neighborhood of the point $\b{1}$ (some regions $\mathit{fat}_{\fp}(N_i)$ will have more than one curve, we then label these as $S_i'$ and $S_i''$, see \cref{labeling} and \cref{labelingh2}).

If we can connect two curves with a segment in some direction, and these curves satisfy the conditions given by \cref{toftau}, we can also connect them with a segment in that direction which intersects both curves at points closer to $\partial\bR^2_{> 0}$. This allows us to adjust our segments so that adjacent segments intersect a curve at the same point. This will also allow us to show that there is a family of forward invariant regions that cover $\bR^2_{>0}$.

We must determine  $\b{n}_j$, the inward normal of $\partial \cR$ along edge $j$, using only local information in order to distinguish inward and outward directions before completing the construction of $\cR$. To do this, we specify $\b{v}_j$ to be the clockwise direction of an edge, defined to mean that $\b{v}_j$ points from $S_i$ to $S_{i+1}$ (or $S_i'$ to $S_i''$, see \cref{labeling} and \cref{labelingh2}). In this way we have specified an orientation for the edges of $\partial \cR$. Then, for $\b{x}$ along this segment, $\b{n}(\b{x}) = \b{n}_j$ is the unit vector perpendicular to $\b{v}_j$ such that if we take the determinant
\[
det(\b{n}_j ,\b{v}_j) > 0
\]
We will call $\b{n}_j$ the clockwise normal to the segment with direction $\b{v}_j$, and construct $\cR$ so that the inward normal to $\partial \cR$ along an edge is the clockwise normal to that edge.

We will construct $\partial \cR$ in four parts. Two of these will be segments of the lines $\{x=x^*\}$, $\{y=y^*\}$ where $x^* < 1$ and $y^*<1$ are chosen so that they do not intersect $\mathit{fat}_{\fp}(\b{1})$. Note that these must be supporting directions of $K(N_i)$ in regions $\mathit{fat}_{\fp}(N_i)$ such that $\{X=0,Y<0\}\subseteq \widehat{N}_{i}$ and  $\{X<0,Y=0\}\subseteq \widehat{N}_{i}$ respectively, as seen by calculating $B^{\delta}(N_i)$ for such regions. To draw $\partial \cR$ through other regions, we can connect points on these lines with the other two parts of $\partial \cR$, which will be polygonal lines $H_1$ and $H_2$, as in \cref{sketch}. 

Throughout the proof, we will use $\{\b{\hat{x}},\b{\hat{y}}\}$ to denote the standard basis in $\bR^2$.

\begin{figure}
\begin{center}
\begin{tikzpicture}[scale = 0.6]
	\draw[->] (-0.5,0) -- (5,0);
    \draw[->] (0,-0.5) -- (0,5);
    \draw[thick] (0.1,0) -- (0.1,5) node[pos = 0.5, right]{$\{x  = x^*\}$};
    \draw[thick] (0,0.1) -- (5,0.1) node[pos = 1, right]{$\{y  = y^*\}$};
    \draw[dashed,thick] (2,0.1) -- (1.2,0.4)--(0.5,1.2)node[pos  = 0.5,right]{$H_1$} -- (0.1,2) ;
    \draw[dotted,very thick] (4, 0.1) -- (4.5,0.3)--(4.8,0.9)--(5.3,3.5)--(4.9,5)node[pos  = 0.5,left]{$H_2$}--(3.2,5.4)--(1.1,5)--(0.6,4.7)--(0.1,3.5);
\end{tikzpicture}
\caption{Sketch of how $\cR$ will be constructed. The four parts of the construction are: a segment of $x = x^*$, a segment of $y = y^*$, the polygonal line $H_1$, and the polygonal line $H_2$.}
\label{sketch}
\end{center}
\end{figure}
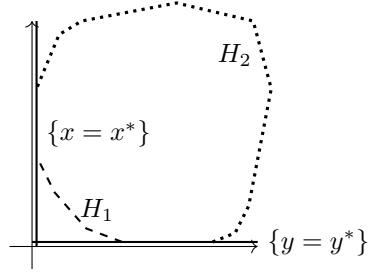
\bigskip
{\bf Construction of polygonal line $H_1$.}

Notice first that if the third quadrant is contained in some $\widehat{N}$, we connect the lines $x=x^*$ and $y=y^*$ and we can take $H_1$ to be their intersection point. Otherwise, we number regions as in \cref{labeling} (a). In this numbering, we have the segment $0<x=y<c\subset \mathit{fat}_{\fp}(N_l)$ for some $c\in (0,1)$. Next we choose curves for each $N_i$ with the form
\[
S(t) = \{(t^{m_1},\alpha t^{m_2})\} \quad m_1,m_2 < 0
\] 
so that there is some $t_0$ where $S_i(t) \subset \mathit{fat}_{\fp}(N_i)$ for $t>t_0$. These curves are numbered as in \cref{labeling} (b). Notice \emph{two} such curves are chosen inside $\mathit{fat}_{\fp}(N_l)$. We take, if possible, $m_1 \neq m_2$ for the curves $S_{l}'$ and $S_{l}''$. If this choice is not allowed, because such curves are not contained in $\mathit{fat}_{\fp}(N_l)$, we take $\alpha < 1$ for $S_{l}'$ and $\alpha>1$ for $S_l''$.

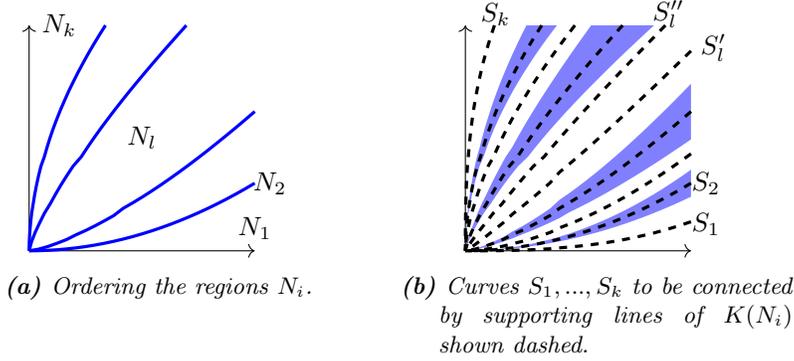
\begin{figure}
\begin{center}
\begin{subfigure}[t]{0.4\textwidth}
\begin{center}
\begin{tikzpicture}[scale = 10]
	\draw[->] (0,0) -- (0,0.3);
	\draw[->] (0,0) -- (0.3,0);
	\draw[color = blue,very thick] plot[domain = 0.3:0] (\x,\x^1.4);
	\draw[color = blue,very thick] plot[domain = 0.3:0] (\x^1.3,\x);
	\draw[color = blue,very thick] plot[domain = 0.3:0] (\x,\x^2);
	\draw[color = blue,very thick] plot[domain = 0.3:0] (\x^1.9,\x);
	\node at (0.3,0.03) {$N_1$};
	\node at (0.04,0.3) {$N_k$};
	\node at (0.15,0.15) {$N_{l}$};
	\node at (0.32,0.09) {$N_2$};
\end{tikzpicture}
\end{center}
\caption{Ordering the regions $N_i$.}
\end{subfigure}
\hspace{0.5 cm}
\begin{subfigure}[t]{0.4\textwidth}
\begin{center}
\begin{tikzpicture}[scale = 10]
	\draw[->] (0,0) -- (0,0.3);
	\draw[->] (0,0) -- (0.3,0);
	\path[fill,color = blue,opacity = 0.5] plot[domain = 0.3:0] (\x,1.2*\x^1.4) -- plot[domain = 0:0.3] (\x,0.8*\x^1.4);
	\path[fill,color = blue,opacity = 0.5] plot[domain = 0.3:0] (1.2*\x^1.3,\x) -- plot[domain = 0:0.3] (0.8*\x^1.3,\x);
	\path[fill,color = blue,opacity = 0.5] plot[domain = 0.3:0] (\x,1.2*\x^2) -- plot[domain = 0:0.3] (\x,0.8*\x^2);
	\path[fill,color = blue,opacity = 0.5] plot[domain = 0.3:0] (1.2*\x^1.9,\x) -- plot[domain = 0:0.3] (0.8*\x^1.9,\x);
	\draw[dashed, very thick] plot[domain = 0.3:0] (\x,\x^1.4);
	\draw[dashed, very thick] plot[domain = 0.3:0] (\x^1.3,\x);
	\draw[dashed, very thick] plot[domain = 0.3:0] (\x^1.9,\x);
	\draw[dashed, very thick] plot[domain = 0.3:0] (\x,\x^2);
	\draw[dashed, very thick] plot[domain = 0.3:0] (\x,\x^1.1);
	\draw[dashed, very thick] plot[domain = 0.3:0] (\x^1.1,\x);
	\draw[dashed, very thick] plot[domain = 0.3:0] (\x^1.6,\x);
	\draw[dashed, very thick] plot[domain = 0.3:0] (\x,\x^1.7);
	\draw[dashed, very thick] plot[domain = 0.3:0] (\x^2.7,\x);
	\draw[dashed, very thick] plot[domain = 0.3:0] (\x,\x^2.7);
	\node at (0.32,0.035) {$S_1$};
	\node at (0.32,0.09) {$S_2$};
	\node at (0.33,0.27) {$S_{l}'$};
	\node at (0.27,0.315) {$S_{l}''$};
	\node at (0.04,0.315) {$S_k$};
\end{tikzpicture}
\end{center}
\caption{Curves $S_1,...,S_k$ to be connected by supporting lines of $K(N_i)$ shown dashed.}
\end{subfigure}
\caption{Labeling used in the construction of $H_1$. The qualitative difference in limiting tangent between $S_1,...,S_{l-1},S_{l}'$ and $S_{l}'',S_{l+1},...,S_k$ (with the possibility that $S_{l}'$ and $S_{l}''$ are straight lines) makes it necessary to treat three different cases.}\label{labeling}
\end{center}
\end{figure}

We need to prove the following claim for each pair of adjacent curves in \cref{labeling} (b) (including one or both of the curves $S_l'$ and $S_l''$). This claim states that for any pair of adjacent curves, there exists a choice of direction vector $\b{v}$ for a connecting line segment which satisfies the hypothesis of \cref{toftau} and can serve as the boundary of an invariant region for the differential inclusion $\cK(\cN)$ in the region between these two curves.

\begin{clm}\label{cl1}
	Let $S_{-}(t)$ and $S_{+}(t)$, with $S_+$ clockwise of $S_-$, be adjacent curves as constructed above and $\widehat{N}_q$ be the one dimensional ray such that one of $S_{-}$ and $S_{+}$ is contained in $\mathit{fat}_{\fp}(N_q)$, or $N_q = N_i$ if $S_-,S_+ \subset N_i$. Denote by $\b{n}(S(t))$ a clockwise normal vector to the curve $S$ at the point $S(t)$. Then there exists a direction $\b{v}$ with clockwise normal $\b{n}_{\b{v}}$ such that
	\begin{enumerate}[(a)]
		\item for any $\b{w} \in K(N_q)$, we have $\b{w} \cdot \b{n}_{\b{v}} \geq 0$
		\item there exists $t_1>t_0$ such that $\b{n}(S_-(t)) \cdot \b{v} \neq 0$ for $t>t_1$
		\item there exists $t_2>t_1$ such that the line $L_{t_2} = S_-(t_2) + s\b{v}$ intersects $S_{-}$ at a point $\b{x} = S_{+}(\tau_2)$ where $\tau_2>t_0$
		\item $\b{n}(S_{+}(\tau))\cdot \b{v} \neq 0$ for $\tau\geq \tau_2$
	\end{enumerate}
\end{clm}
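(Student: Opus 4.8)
The claim is precisely the assertion that a single connecting direction $\b{v}$ can be chosen to satisfy the supporting-line requirement (a) together with the three hypotheses (b)--(d) needed to invoke \cref{toftau}. My plan is to extract $\b{v}$ from the tropically endotactic condition and to verify transversality and the initial intersection using the explicit description of the escape cones in \cref{escapedirsfig}. The first step is to pin down the limiting unit tangents of the two curves. Each $S_{\pm}(t) = (t^{m_1},\alpha t^{m_2})$ with $m_1,m_2<0$ becomes, after the substitution $u=\log t$, a reparameterization of an escape curve $\exp(\b{r}u+\b{c})$ of the region in which it lies, so by \cref{edirs} its unit tangent $[S_{\pm}'(t)]_n$ converges as $t\to\infty$ to a generating direction of $B^{\delta}(N_q)$ --- horizontal when the leading exponent is $m_1$ and vertical when it is $m_2$. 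In particular, the (one or two) limiting tangent directions of $S_-$ and $S_+$ lie in $\overline{B^{\delta}(N_q)}$.

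Next I would choose $\b{v}$. By \cref{tropendo} there is some $\delta>0$ with $K(N_q)\cap B^{\delta}(N_q)^{\circ}=\emptyset$, so the convex cone $K(N_q)$ is disjoint from the open convex wedge $B^{\delta}(N_q)^{\circ}$ and can be separated from it by a line through the origin. Let $\b{n}$ be a unit normal of such a separating line pointing into the half-plane containing $K(N_q)$; then $\b{w}\cdot\b{n}\geq 0$ for every $\b{w}\in K(N_q)$, i.e. $\b{n}$ lies in the polar cone of $K(N_q)$. I take $\b{v}$ perpendicular to $\b{n}$ and oriented from $S_-$ to $S_+$, so that $\b{n}=\b{n}_{\b{v}}$ is the clockwise normal, which gives (a). Because $K(N_q)$ is a proper subcone of $\bR^2$ and, generically, pointed, its polar has nonempty interior, so $\b{n}_{\b{v}}$ (and hence $\b{v}$) may be perturbed within an open set of admissible directions; since the forbidden set --- the at most two limiting tangent directions of $S_{\pm}$ --- is finite, $\b{v}$ can be chosen transverse to both limiting tangents, and transversality then persists for all large $t,\tau$, giving (b) and (d). The face condition $K(M)\subseteq K(N_q)$ in \cref{tropendo} guarantees that a supporting line of $K(N_q)$ also supports the cones of the adjacent two-dimensional regions, so the same segment bounds the inclusion across the whole strip between $S_-$ and $S_+$.

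It remains to establish the initial intersection (c), for which I would argue by an intermediate value / asymptotic comparison. Writing the affine half-line $L_{t_2}(s)=S_-(t_2)+s\b{v}$ and comparing it with $S_+$, transversality of $\b{v}$ to the limiting tangent of $S_+$ (part (d)) means $L_{t_2}$ cannot remain asymptotically parallel to $S_+$; since $\b{v}$ points from $S_-$ into the region bounded by $S_-$ and $S_+$, for $t_2$ large the half-line enters this region, and because $S_+$ limits to the origin with a tangent not aligned with $\b{v}$, the line must cross $S_+$ at some $\tau_2>t_0$. Concretely, one evaluates along $L_{t_2}$ the difference of the two sides of the defining relation of \cref{toftau} (the function $g$ there) and applies the intermediate value theorem, choosing $t_2$ large enough, by monotonicity of the power laws, that the crossing occurs beyond $t_0$ on $S_+$.

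The argument splits into the three cases indicated in \cref{labeling}: both curves drawn from $S_1,\dots,S_{l-1},S_l'$ (limiting tangents horizontal), both from $S_l'',\dots,S_k$ (limiting tangents vertical), and the straddling pair $S_-=S_l'$, $S_+=S_l''$. The first two are symmetric and routine. I expect the main obstacle to be the straddling case, where $N_q=N_l$ contains the half-line $y=x<1$, so by \cref{escapedirsfig} $B^{\delta}(N_l)=-\bR^2_{>0}$ and the two limiting tangents are qualitatively different (one horizontal, one vertical). Here the separation of $K(N_l)$ from the open third quadrant of directions is tight, and $\b{v}$ must be chosen near the diagonal to be transverse to both tangents at once; the degenerate subcase in which $S_l'$ and $S_l''$ are straight lines through the origin ($m_1=m_2$) must be treated on its own, as must the possibility that $K(N_q)$ is a half-plane (non-pointed), in which the admissible set of $\b{v}$ collapses to a single direction and one must check directly that it avoids the limiting tangents.
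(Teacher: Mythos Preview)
Your overall strategy coincides with the paper's: pick $\b{v}$ along a line separating $K(N_q)$ from $B^{\delta}(N_q)$, orient it clockwise, and then verify the transversality and intersection conditions case by case according to the position of $S_{\pm}$ relative to $S_l',S_l''$. The case split you propose is exactly the one the paper uses.

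There is, however, a genuine gap in your treatment of (b) and (d). Your argument is: the polar of $K(N_q)$ has nonempty interior, so $\b{v}$ can be perturbed to avoid the (finitely many) limiting tangent directions of $S_{\pm}$. You then note that when $K(N_q)$ is a half-plane the admissible $\b{v}$ collapses to a single direction, and you propose to ``check directly that it avoids the limiting tangents.'' But this check can fail. For $S_-\in\{S_1,\dots,S_{l-1}\}$ the limiting tangent of both $S_-$ and $S_+$ is $-\b{\hat{x}}$, and the escape cone $B^{\delta}(N_q)$ is a thin one-sided wedge abutting $-\b{\hat{x}}$ (see \cref{escapedirsfig}). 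It is then perfectly consistent with the tropically endotactic hypothesis for $K(N_q)$ to be a closed half-plane whose boundary is the $x$-axis, in which case the only separating line is horizontal and $\b{v}=-\b{\hat{x}}$ is forced --- exactly the limiting tangent you wanted to avoid.

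The paper's resolution is not to avoid this coincidence but to observe that it is harmless: for the curves $S_i(t)=(t^{m_1},\alpha t^{m_2})$ with $m_1,m_2<0$ the clockwise normal satisfies $\b{n}(S_i(t))\to\b{\hat{y}}$ monotonically, yet $\b{n}(S_i(t))\cdot(-\b{\hat{x}})\neq 0$ for every finite $t>t_0$. Thus transversality $\b{n}(S_\pm(t))\cdot\b{v}\neq 0$ holds for all finite $t$ even when $\b{v}$ equals the limiting tangent; the limit is approached but never attained. The same remark handles (c) when $\b{v}=-\b{\hat{x}}$: one uses that $S_{+}$ converges to the origin (so a leftward horizontal ray from $S_-(t_2)$ must meet it), rather than a transversality-at-infinity argument. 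Once you replace the perturbation step with this ``strict for finite $t$'' observation, your proof goes through and matches the paper's.
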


\begin{figure}[ht]
	\begin{center}
		\begin{tikzpicture}[scale = 1.6]
			\draw[->] (0,0) -- (0,2.5);
			\draw[->] (0,0) -- (2.5,0);
			\draw[dashed, very thick] plot[domain = 0:0.5] (5*\x,5*\x^2.5);
			 \node at (2.8,5*0.5^2.5) {$S_{-}(t)$};
			\draw[dashed, very thick] plot[domain = 0:0.5] (5*\x,5*\x^1.3);
			 \node at (2.8, 5*0.5^1.3) {$S_{+}(\tau)$};
			\draw[-latex,thick] (5*0.25,5*0.25^2.5) -- (5*0.25+0.13,5*0.25^2.5+0.4) node[left] {$\b{v}$};
			\draw[color = dgreen,thick] (5*0.25,5*0.25^2.5) -- (5*0.25+0.2925,5*0.25^2.5+0.9) node[pos = 0.2,right] {$L_t$};
			\draw[latex-latex,color = nicered,thick] (1.8,0.9) -- (1.445,0.75625) --(1.8,0.6) node[right,pos = 0.5] {$K(N_q)$}; 
			\draw[-latex, color = nicepink] (5*0.15,5*0.15^2.5)--(5*0.15-0.05,5*0.15^2.5+0.2) node[left] {$\b{n}(S_-(t))$};			\draw[-latex, color = nicepink] (5*0.2,5*0.2^1.3)--(5*0.2-0.15,5*0.2^1.3+0.2) node[above] {$\b{n}(S_{+}(\tau))$};
		\end{tikzpicture}
		\caption{The line $L_t$ intersects the curves $S_-(t)$ and $S_{+}(\tau)$.}
		\label{h1claim}
	\end{center}
\end{figure}
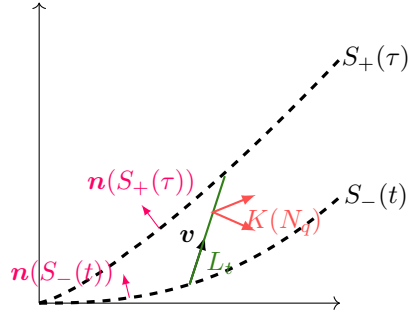

\emph{Proof of \cref{cl1}}. We need to consider three cases: $S_{-}\in\{S_1,...,S_{l-1}\}$,  $S_{-} = S_l'$, and $S_{-}\in\{S_{l}'',...,S_k\}$, with the first only if $l>2$.

Assume that $S_- = S_i \in \{S_1,...,S_{l-1}\}$ and let $q \in \{i,i+1\}$ be such that $N_q$ is one dimensional. Let $\b{v}$ be contained in a line that separates the cones $K(N_q)$ and $B^{\delta}(N_q)$, chosen such that $\b{v}\cdot \b{\hat{y}} >0$ or $\b{v} = -\b{\hat{x}}$. Such a choice is always possible because every line through the origin in $\bR^2$ either intersects the open upper half plane or is the $x$-axis. Then, we see that $\b{v}$ satisfies \cref{cl1} by checking the requirements in order.
\begin{enumerate}[(a)]
	\item There exists some vector $\b{u}$ arbitrarily close to $-\b{\hat{x}}$ in $B^{\delta}(N_q)^{\circ}$, and our choice of $\b{v}$ is such that $\det(\b{u},\b{v}) < 0$. Then, because $\b{v}$ is a direction that separates $B^{\delta}(N_q)$ and $K(N_q)$, we can conclude for $\b{w} \in K(N_q)$ that $\det(\b{w},\b{v})\geq 0$ which implies that $\b{w} \cdot \b{n}_{\b{v}} \geq 0$.
	
	\item We have that $\b{n}(S_i(t)) \to \b{\hat{y}}$, $\b{n}(S_i(t))\cdot (-\b{\hat{x}})\to 0$ monotonically, and $\b{n}(S_i(t))\cdot (-\b{\hat{x}}) \neq 0$ for all finite $t>t_0$. There is then $t_1>t_0$ such that $\b{n}(S_i(t)) \cdot \b{v} \neq 0$ for $t>t_1$.
	
	\item Both curves $S_i$ and $S_{i+1}$ approach horizontal and converge to the origin, and the curve $S_{i+1}$ is above the curve $S_{i}$. For any direction $\b{v}$ with $\b{v}\cdot \b{\hat{y}}>0$, there is some $t_2>t_1$ such that $L_{t_2}(s) = S_i(t_2) + s\b{v}_i$ intersects $S_{i+1}$ at $S_{i+1}(\tau_2)$, $\tau_2>\tau_1$. If $\b{v} = -\b{\hat{x}}$, we again have this intersection because $S_{i+1}\to 0$. 
	
	\item As in (b), We have that $\b{n}(S_{i+1}(t)) \to \b{\hat{y}}$, $\b{n}(S_{i+1}(t))\cdot -\b{\hat{x}}\to 0$ monotonically, and $\b{n}(S_{i+1}(t))\cdot (-\b{\hat{x}}) > 0$ for all finite $t>t_0$.
\end{enumerate}

If $S_- = S_{l-1}$ and $S_+ = S_{l}'$ has limiting tangent $-\b{\hat{x}}$, the preceding argument is valid. However, we may have that $S_{l}'$ is a line of positive slope. Then, $B^{\delta}(N_q) = -\bR^2_{>0}$ and we can choose $\b{v}$ to be the direction of a separating subspace which has $\b{v}\cdot \b{\hat{y}} \geq 0$ and $\b{v} \cdot (-\b{\hat{x}})\geq 0$. Then (a) and (b) are as in the previous case and we have that
\begin{enumerate}[(a)]
	\setcounter{enumi}{2}
	\item Here, $S_{l}'$ is a line of positive slope, and $S_{l-1}$ is below this line. Also, $\b{v} \cdot \b{\hat{y}}\geq 0$ and $\b{v} \cdot (-\b{\hat{x}})\geq 0$, so we will have the intersection desired.
	
	\item Because $S_{l}'$ is a line of positive slope, $\b{n}(S_l') \cdot \b{\hat{y}}\geq 0$ and $\b{n}(S_l') \cdot (-\b{\hat{x}})\geq 0$. The same is true of $\b{v}$, so (d) is easily satisfied.
\end{enumerate}

Next, we need to prove the claim for $S_- = S_{l}'$ and $S_+ = S_{l}''$. Again, $B^{\delta}(N_l) = -\bR^2_{>0}$ and we can choose $\b{v}$ to be the direction of a separating subspace which has $\b{v}\cdot \b{\hat{y}} \geq 0$ and $\b{v} \cdot -\b{\hat{x}}\geq 0$. Then, (a) is as in the first cases, and if $S_{l}'$ and $S_{l}''$ are lines of positive slope, (b),(c),(d) are as in the previous case. If not, (b) is as in the previous case, (d) is analogous, reflected across the line $y=x$ (and taking $-\b{v}$), and lastly:
\begin{enumerate}[(a)]
	\setcounter{enumi}{2}
	\item Note that $S_{l}''$ is above $S_{l}'$, while $\b{v}\cdot \b{\hat{y}}\geq 0$. Furthermore, $S_l''$ is to the left of $S_l'$, and $\b{v}\cdot -\b{\hat{x}}\leq 0$. A segment starting at $S_{l}'$ in the direction of $\b{v}$ will not intersect $S_{l}'$ again, and so the desired intersection must occur.
\end{enumerate}

For $S_- \in \{ S_{l}'',...,S_k\}$, we can reflect the previous argument across the line $y = x$. Reflection gives a counterclockwise construction of each $L_t$, with directions that satisfy $\b{v}\cdot \b{\hat{x}} >0$ or $\b{v} = -\b{\hat{y}}$. A continuing clockwise construction would instead use $-\b{v}$. This concludes the proof of \cref{cl1}.

\bigskip

\Cref{cl1} gives us a way to construct line segments that serve as a boundary of an invariant region for $\cK(\cN)$ within the individual regions between curves $S_{-}(t)$ and $S_{+}(t)$. Note that two such segments in adjacent regions do not necessarily intersect. On the other hand, results (b),(c), and (d) of the claim are three of the four conditions of \cref{toftau}. Furthermore, to construct $H_1$ we only considered curves whose logarithmic image is contained in the third quadrant, and so the last condition of \cref{toftau} is also satisfied. Therefore, by applying \cref{toftau} we can ``slide" each segment so that segments in adjacent regions meet at a point. Therefore, a solution of the differential inclusion $\cK(\cN)$ cannot cross $H_1$ in the outward direction.

\bigskip
{\bf Construction of polygonal line $H_2$.}

We number regions as in \cref{labelingh2} (a). In this numbering, we have the segment $c<x=y \subset \mathit{fat}_{\fp}(N_p)$, the segment $\{x = 1, y>c\} \subset \mathit{fat}_{\fp}(N_o)$, and the segment $\{x > c, y = 1\} \subset \mathit{fat}_{\fp}(N_r)$ for some $c \in (1,\infty)$. Next we choose curves with the form
\[
S(t) = \{(t^{m_1},\alpha t^{m_2})\} 
\] 
where either $m_1>0$ or $m_2 > 0$ so that there is some $t_0$ where $S_i(t) \subset \mathit{fat}_{\fp}(N_i)$ for $t>t_0$. These curves are numbered as in \cref{labelingh2} (b). Notice \emph{two} such curves are chosen inside each of $\mathit{fat}_{\fp}(N_o)$, $\mathit{fat}_{\fp}(N_p)$, and $\mathit{fat}_{\fp}(N_r)$. We choose for $S_{o}'$ that $m_1 \leq 0$ and $S_{o}''$ that $m_1 \geq 0$ with strict inequality if possible. Similarly, we choose for $S_{r}'$ that $m_2 \geq 0$ and for $S_{r}'$ that $m_2 \leq 0$ with strict inequality if possible. We choose for $S_{{p}}'$, $m_1\leq m_2$ and $\alpha>1$ if $m_1 = m_2$, and for $S_{p}''$, $m_2\leq m_1$ and $\alpha<1$ if $m_1 = m_2$. We choose $m_1 \neq m_2$ for these curves if this is a possible choice. 

\begin{figure}
\begin{center}
\begin{subfigure}[t]{0.45\textwidth}
\begin{center}
\begin{tikzpicture}[scale = 0.6]
	\draw[->] (0,0) -- (0,5);
	\draw[->] (0,0) -- (5,0);
	\draw[color = blue, very thick] plot[domain = 0:2.2] (\x^1.9,\x);
	\draw[color = blue, very thick] plot[domain = 0:2.2] (\x,\x^2);
	\draw[color = blue, very thick] plot[domain = 0.2:5] (\x,\x^-1);
	\node at (0.4,1.4) {$N_h$};
	\node at (4,4) {$N_p$};
	\node at (1.5,0.3) {$N_m$};
	\node at (1,4) {$N_o$};
	\node at (4,1) {$N_r$};
\end{tikzpicture}
\end{center}
\caption{Ordering the cones.}
\end{subfigure}
\hspace{0.5 cm}
\begin{subfigure}[t]{0.45\textwidth}
\begin{center}
\begin{tikzpicture}[scale = 0.6]
	\draw[->] (0,0) -- (0,5);
	\draw[->] (0,0) -- (5,0);
	\path[fill,color = blue,opacity = 0.5] plot[domain =2.03:0] (\x,1.2*\x^2) -- plot[domain = 0:2.5] (\x,0.8*\x^2);
	\path[fill,color = blue,opacity = 0.5] plot[domain =2.17:0] (1.2*\x^1.9,\x) -- plot[domain = 0:2.7] (0.8*\x^1.9,\x);
	\path[fill,color = blue,opacity = 0.5] plot[domain =0.16:5] (\x,0.8*\x^-1) -- plot[domain = 5:0.3] (\x,1.5*\x^-1);
	\draw[dashed, very thick] plot[domain = 1:2.8] (\x,\x^1.4);
	\draw[dashed, very thick] plot[domain = 1:2.7] (\x^1.5,\x);
	\draw[dashed, very thick] plot[domain = 1:1.67] (\x,\x^2.9);
	\draw[dashed, very thick] plot[domain = 1:4.5] (\x^-0.3,\x);
	\draw[dashed, very thick] plot[domain = 1:1.67] (\x^2.9,\x);
	\draw[dashed, very thick] plot[domain = 1:2.4] (\x^1.9,\x);
	\draw[dashed, very thick] plot[domain = 1:2.25] (\x,\x^2);
	\draw[dashed, very thick] plot[domain = 0.2:5] (\x,\x^-1);
	\draw[dashed, very thick] plot[domain = 1:5] (\x,\x^-1.5);
	\draw[dashed, very thick] plot[domain = 0.04:1] (\x,\x^-0.5);
	\draw[dashed, very thick] plot[domain = 1:4.5] (\x,\x^-0.3);
	\node at (2.9,4.35) {$S_{p}'$};
	\node at (4.8,2.9) {$S_{p}''$};
	\node at (1.6,4.7) {$S_{o}''$};
	\node at (0.7,4.8) {$S_{o}'$};
	\node at (4.9,1.7) {$S_{r}'$};
	\node at (4.9,0.7) {$S_{r}''$};
	\node at (-0.3,4.5) {$S_{h}$};
	\node at (5.4,0.1) {$S_m$};
\end{tikzpicture}
\end{center}
\caption{Curves $S_h,S_{h+1},...,S_m$ to be connected by supporting lines of $K_{G}^{\delta}(D_i)$.}
\end{subfigure}
\caption{Labeling used in the construction of $H_2$. Notice that the logarithmic images of $S_{h},S_{h+1},...,S_{o}'$ are contained in the second quadrant, the logarithmic images of $S_{o}'',S_{o+1},...,S_{r}'$ are contained in the first quadrant, and the logarithmic images of $S_{r}'',S_{r+1},...,S_m$ are contained in the fourth quadrant. The qualitative difference in limiting tangent between the curves $S_h,S_{h+1},...,S_m$ makes it necessary to treat several different cases.}\label{labelingh2}
\end{center}
\end{figure}
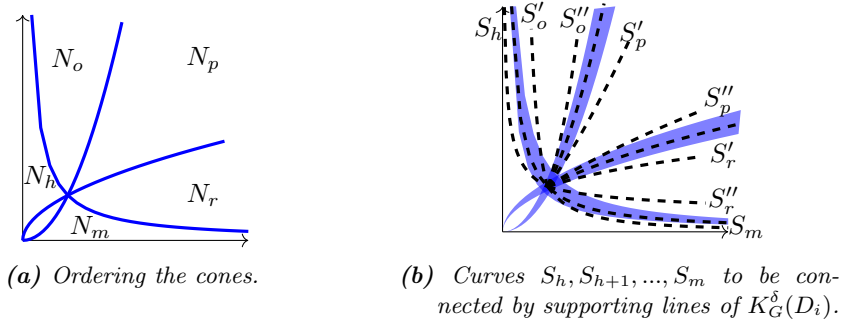

Just as in the construction of $H_1$, we need to prove a claim for each adjacent pair $S_-(t)$, $S_{+}(t)$ (with some pairs including one or both of $S_o'$,$S_o''$ and $S_r'$, $S_r''$ and $S_p'$, $S_p''$). As before, we will make the claim that for any pair of adjacent curves, there exists a choice of direction vector for a connecting line segment which satisfies the hypothesis of \cref{toftau} and can serve as the boundary of an invariant region for the differential inclusion $\cK(\cN)$ in the region between these two curves. The difference between the claim below and \cref{cl1} is that we are now dealing with curves whose logarithmic images are outside of the third quadrant.

\begin{clm}\label{cl2}
	Let $S_-(t)$ and $S_{+}(t)$, with $S_+$ clockwise of $S_-$, be adjacent curves as constructed above and let $\widehat{N}_q$ be the one dimensional ray such that one of $S_{-}$ and $S_{+}$ is contained in $\mathit{fat}_{\fp}(N_q)$, or $N_q = N_i$ if $S_-,S_+ \subset N_i$. Denote by $\b{n}(S(t))$ a clockwise normal vector to the curve $S$ at the point $S(t)$. Then there exists a direction $\b{v}$ with clockwise normal $\b{n}_{\b{v}}$ such that
	\begin{enumerate}[(a)]
		\item for any $\b{w} \in K(N_q)$, we have $\b{w} \cdot \b{n}_{\b{v}} \geq 0$
		\item there exists $t_1>t_0$ such that $\b{n}(S(t)) \cdot \b{v} \neq 0$ for $t>t_1$
		\item there exists $t_2>t_1$ such that the line $L_{t_2} = S_-(t_2) + s\b{v}$ intersects $S_{+}$ at a point $\b{x} = S_{+}(\tau_2)$ where $\tau_2>t_0$
		\item $\b{n}(S_{+}(\tau))\cdot \b{v} \neq 0$ for $\tau\geq \tau_2$
	\end{enumerate}
\end{clm}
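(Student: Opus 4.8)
The plan is to follow the proof of \cref{cl1} closely; the only structural change is that the curves of \cref{labelingh2} escape toward the axes $x=0$ and $y=0$, or toward infinity along the diagonal, rather than toward the origin. Consequently the escape cones change: when $\widehat{N}_q$ is adjacent to $x=0$, to $y=0$, or to one of the two rays at infinity (the $\exp$-images of the log-directions $(0,1)$ and $(1,0)$), the cone $B^{\delta}(N_q)$ is a half-plane, and when the relative interior of $N_q$ contains the half-line $y=x>1$ we have $B^{\delta}(N_q)=\bR^2_{>0}$, as recorded in \cref{escapedirsfig}. Condition (a) is then obtained uniformly and exactly as in \cref{cl1}: since the inclusion is tropically endotactic, \cref{tropendo} gives $K(N_q)\cap B^{\delta}(N_q)^{\circ}=\emptyset$, and as both sets are convex cones there is a line through the origin separating them; taking $\b{v}$ along this line, oriented so that its clockwise normal $\b{n}_{\b{v}}$ points into the half-space containing $K(N_q)$, gives $\b{w}\cdot\b{n}_{\b{v}}=\det(\b{w},\b{v})\geq 0$ for all $\b{w}\in K(N_q)$, by the same determinant identity used in \cref{cl1}(a).

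For (b), (c), and (d) I would argue region by region, grouped by the quadrant of the logarithmic images. For the second-quadrant curves $S_h,\dots,S_o'$ (where $x\to 0$ and $y\to\infty$) each $S_i$ has vertical limiting tangent, so its clockwise normal satisfies $\b{n}(S_i(t))\to\b{\hat{x}}$ with $\b{n}(S_i(t))\cdot\b{\hat{y}}\to 0$ monotonically while remaining nonzero for finite $t$. This is the situation of \cref{cl1} with horizontal limiting tangents replaced by vertical ones (and the normals correspondingly rotated): with $\b{v}$ along the separating line of (a), conditions (b) and (d) follow from this monotone, nonvanishing behavior of the normals, and (c) follows because both adjacent curves become vertical and $S_+$ lies nearer the $y$-axis (or the upper ray at infinity) than $S_-$, so the segment in direction $\b{v}$ must meet $S_+$ at a larger parameter. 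The fourth-quadrant curves $S_r'',\dots,S_m$ (where $x\to\infty$ and $y\to 0$) are then obtained by reflecting across the line $y=x$, exactly as the final paragraph of \cref{cl1} reflects $\{S_1,\dots,S_l'\}$ onto $\{S_l'',\dots,S_k\}$; the upper transition pair $S_o',S_o''$ reflects likewise onto the right transition pair $S_r',S_r''$, which halves the remaining work.

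The genuinely new pieces are the diagonal region $N_p$ and the transition pairs $S_o',S_o''$ and $S_p',S_p''$, which play the role of $S_l',S_l''$ in \cref{cl1}. For $N_p$ we have $B^{\delta}(N_p)=\bR^2_{>0}$, so I would take $\b{v}$ in the separating wedge with the sign conditions mirroring, with reversed signs, the $-\bR^2_{>0}$ treatment of $S_l',S_l''$; since $S_p''$ lies below and to the right of $S_p'$ and both curves leave every compact set, a segment started on $S_p'$ in the direction $\b{v}$ cannot return to $S_p'$ and so must cross $S_p''$, which gives (c), while (b) and (d) hold because the limiting tangents of $S_p',S_p''$ lie in the closed first quadrant and $\b{v}$ can be chosen within the separating wedge so that their clockwise normals are nowhere perpendicular to it for large parameter. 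For $S_o',S_o''$, which are adjacent to the upper ray at infinity, $B^{\delta}$ is the upper half-plane and $\b{v}$ is chosen to respect $\b{v}\cdot\b{\hat{y}}\geq 0$, mirroring the treatment of the pair $S_{l-1},S_l'$ in \cref{cl1}.

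I expect the main obstacle to be verifying (c) at the quadrant transitions, where a single $\b{v}$ must serve one curve with nearly vertical limiting tangent and an adjacent curve with nearly horizontal limiting tangent, and where $B^{\delta}$ jumps from a half-plane to the full quadrant $\bR^2_{>0}$. The delicate point is to confirm that the separating direction $\b{v}$ supplied by (a) can be oriented so that the half-line $L_{t_2}=S_-(t_2)+s\b{v}$ genuinely crosses $S_+$ at some parameter $\tau_2>t_0$, rather than running asymptotically parallel to $S_+$ or leaving the region bounded by the two curves; this is arranged by exploiting the freedom in choosing the exponents of the scaffolding curves $S_\pm$ together with the relative position of the two curves. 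Once every adjacent pair is handled, conditions (b), (c), and (d) supply three of the four hypotheses of \cref{toftau}, and the fourth holds automatically, since every curve used in $H_2$ has $m_1>0$ or $m_2>0$, so that $\mathit{sgn}(\max\{m_1,m_2\})=\mathit{sgn}(\max\{\tilde{m}_1,\tilde{m}_2\})>0$ as required.
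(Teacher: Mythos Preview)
Your approach mirrors the paper's---separating line for (a), limiting tangents for (b) and (d), relative positions for (c), and reflection across $y=x$---but your case decomposition has a gap. You treat the second-quadrant curves $S_h,\dots,S_o'$, reflect onto the fourth-quadrant curves $S_r'',\dots,S_m$, and then handle only the transition pairs and the diagonal region $N_p$. This omits the first-quadrant curves $S_{o+1},\dots,S_{p-1}$ (above the diagonal) and $S_{p+1},\dots,S_{r-1}$ (below it) entirely. The paper avoids this by placing the reflection at the diagonal $p$ rather than at the axes $o,r$: it first handles all of $S_h,\dots,S_p'$ (second quadrant together with first quadrant above the diagonal) and only then reflects across $y=x$ to obtain $S_p'',\dots,S_m$.

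The omitted first-quadrant curves do mostly yield to your second-quadrant argument, since they also have vertical limiting tangent, but there is one case you have not anticipated. In the second quadrant $N_q$ is adjacent to $x=0$, so $B^{\delta}(N_q)$ is the left half-plane and, when the only separating direction is vertical, the choice $\b{v}=\b{\hat{y}}$ still gives (c) because $S_+$ approaches the line $x=0$. In the first quadrant $B^{\delta}(N_q)$ is merely a thin cone about $\b{\hat{y}}$, yet $K(N_q)$ may abut it so that the only separating line is again vertical; now both curves recede from the $y$-axis, and a segment in the $+\b{\hat{y}}$ direction from $S_-$ need not meet $S_+$. The paper resolves this by switching to $\b{v}=-\b{\hat{y}}$ and using that $S_-$ lies above $S_+$ (both of the form $y=\alpha x^m$ with $m>1$). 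Two smaller slips: $B^{\delta}(N_o)$ is a thin cone around $\b{\hat{y}}$, not the upper half-plane (see \cref{escapedirsfig}), and the structural analogue of $S_l',S_l''$ from \cref{cl1} is the diagonal pair $S_p',S_p''$, not $S_o',S_o''$.
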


\emph{Proof of \cref{cl2}}. As in the proof of \cref{cl1}, we have more than one case to consider.

First, assume that $S_- = S_i\in \{S_h,...,S_{o-1}\}$. Let $\b{v}$ be contained in a line that separates the cones $K(N_q)$ and $B^{\delta}(N_q)$, chosen such that $\b{v} \cdot \hat{\b{x}}>0$ if possible, and otherwise $\b{v} = \b{\hat{y}}$. Then, we have
\begin{enumerate}[(a)]
	\item There exists some vector $\b{u}$ arbitrarily close to $\b{\hat{y}}$ in $B^{\delta}(N_q)^{\circ}$ with $\det(\b{u},\b{\hat{y}})<0$. Then, our choice of $\b{v}$ is such that $\det(\b{u},\b{v}) < 0$. Then, because $\b{v}$ is a direction that separates $B^{\delta}(N_q)$ and $K(N_q)$, we can conclude for $\b{w} \in K(N_q)$ that $\det(\b{w},\b{v})\geq 0$ which implies that $\b{w} \cdot \b{n}_{\b{v}} \geq 0$.
	\item We have that $\b{n}(S_i(t)) \to \b{\hat{x}}$, $\b{n}(S_i(t))\cdot (\b{\hat{y}})\to 0$ monotonically, and $\b{n}(S_i(t))\cdot (\b{\hat{y}}) \neq 0$ for all finite $t>1$. There is then $t_1$ such that there exists $t_1>t_0$ such that $\b{n}(S_i(t)) \cdot \b{v} \neq 0$ for $t>t_1$.
	\item Both $S_{i}$ and $S_{i+1}$ approach vertical, and $S_{i}$ is to the left of $S_{i+1}$. Therefore, if $\b{v} \cdot \b{\hat{x}} > 0$, the desired intersection occurs. If $\b{v} = \b{\hat{y}}$, we have that $S_{i+1}$ approaches the line $x=0$ and is above the curve $S_{i}$. Then, the desired intersection must occur.
	\item If $i<o-1$ or $S_{o}'$ is not a line, this is as in (b). If $i=o-1$ and $S_{o}'$ is a line, it is vertical and $\b{v} \neq \hat{\b{y}}$, because $\b{\hat{y}} \in B^{\delta}(N_o)^{\circ}$.
\end{enumerate}

The above holds as well if $S_- = S_o'$, again because $\b{\hat{y}} \in B^{\delta}(N_o)^{\circ}$.

The above holds also for $S_-\in \{S_o'',...,S_{p-2}\}$ unless we have that the only separating line between $K(N_q)$ and $B^{\delta}(N_q)$ is vertical. Then we take $\b{v} = -\b{\hat{y}}$ and (b), and (d) are the same, while
\begin{enumerate}[(a)]
	\item There exists some vector $\b{u}$ arbitrarily close to $\b{\hat{y}}$ in $B^{\delta}(N_q)^{\circ}$ with $\det(\b{u},-\b{\hat{y}})<0$. Then, because $\b{v}$ is a direction that separates $B^{\delta}(N_q)$ and $K(N_q)$, we can conclude for $\b{w} \in K(N_q)$ that $\det(\b{w},\b{v})\geq 0$ which implies that $\b{w} \cdot \b{n}_{\b{v}} \geq 0$.
	\setcounter{enumi}{2}
	\item  Both $S_-$ and $S_{+}$ are curves of the form $y = \alpha x^m$ with $m> 1$ with $S_{-}$ above $S_{+}$. Then, with $\b{v}_i = -\b{\hat{y}}$, we have the desired intersection.
\end{enumerate}

If $S_- = S_{p-1}$ and $S_{p}'$ has $m_1<m_2$, then the preceding argument is valid. If $S_{p}'$ is a straight line, then $N_p$ is a ray and we have that $B^{\delta}(N_{p}) = \bR^2_{>0}$. We can therefore choose a direction $\b{v}$ of a separating line of $K(N_q)$ and $B^{\delta}(N_q)$ such that $\b{v}\cdot \b{\hat{x}} \geq 0$ and $\b{v} \cdot \b{\hat{y}} \leq 0$. Then, we have (a) and (b) as in the preceding case, and 
\begin{enumerate}[(a)]
	\setcounter{enumi}{2}
	\item $S_{p}'$ is a line of positive slope below the curve $S_{p-1}$, and so the choice of $\b{v} \cdot \b{\hat{y}} \leq 0$ and $\b{v} \cdot \b{\hat{x}} \geq 0$ guarantees this intersection occurs.
	\item $S_{p}'$ has constant and strictly positive slope, so $\b{n}(S_{p}'(t))\cdot \b{v} >0$ for all $t>1$.
\end{enumerate}

If $S_- = S_p'$, we again have $B^{\delta}(N_{p}) = \bR^2_{>0}$ and so again can choose $\b{v}$ such that $\b{v}\cdot \b{\hat{x}} \geq 0$ and $\b{v} \cdot \b{\hat{y}} \leq 0$. If $S_{p}'$ and $S_{p}''$ are lines (note that they are either both lines or neither are lines) then, (a)-(d) are satisfied because $S_{p}'$ and $S_{p}''$ have positive slope while $L_t = S_p'(t) + \b{s}v$ has negative slope. Otherwise, (a) and (b) are the same as the preceding argument, while (d) is the same reflected across the line $y=x$ (and taking $-\b{v}$). Finally,
\begin{enumerate}[(a)]
	\setcounter{enumi}{2}
	\item If $\b{v} \cdot \b{\hat{x}} \geq 0$ and $\b{v} \cdot \b{\hat{y}} < 0$, then the line $L_t = S_{p}'(t) + s\b{v}$ intersects the (affine) half line $\{x>1, y=1\}$, which is below $S_p''$, making it clear that we have the desired intersection. If $\b{v} = \hat{\b{x}}$, we notice that for any point on $S_{p}'$, there is some point on $S_{p}''$ directly to the right, because $S_{p}''$ is of the form $y = \alpha x^m$ for $m>0$. 
\end{enumerate}

For $S_- \in \{S_{p}'',...,S_{m-1}\}$, we can reflect the previous arguments across the line $y=x$. Reflection gives a counterclockwise construction of each $L_t$ with directions that satisfy $\b{v}_i\cdot \b{\hat{y}}> 0$ or $\b{v} = \b{\hat{x}}$. A continuing clockwise construction would instead choose $-\b{v}$. This concludes the proof of \cref{cl2}.

\bigskip

\Cref{cl2} gives us a way to construct line segments that serve as a boundary of an invariant region for $\cK(\cN)$ within the individual regions between curves $S_{-}(t)$ and $S_{+}(t)$. Note that two such segments in adjacent regions do not necessarily have a common point. On the other hand, results (b),(c), and (d) of the claim are three of the four conditions of \cref{toftau}. Furthermore, to construct $H_2$ we only considered curves whose logarithmic image is not contained in the third quadrant, and so the last condition of \cref{toftau} is also satisfied. Therefore, by applying \cref{toftau} we can ``slide" each segment so that segments in adjacent regions meet at a point. Therefore, a solution of the differential inclusion $\cK(\cN)$ cannot cross $H_2$ in the outward direction.


We finally use \cref{toftau} to connect $H_1$, $H_2$, and the lines $\{x=x^*\}$, $\{y=y^*\}$. Together, these curves form the boundary $\partial \cR$ of a forward invariant region.

\bigskip

{\bf A nested, continuous family of regions.}

We have shown that we can build a forward invariant region $\cR$. We will now show that there exists a continuous nested family of such regions which cover $\bR^2_{>0}$.

Recall that the corner points used to construct $\cR$ lie on the curves $S_i(t) = (t^{m_1},\alpha t^{m_2})$, where $m_1,m_2$ and $\alpha$ depend on $i$. We have seen that there is some $t_0$ such that for $t>t_0$, the curve $S_i(t)$ is contained in $\mathit{fat}_{\fp}(N_{i})$.

To create a nested family of regions $\cR_{t}$, we first choose the corner $\b{x}_1$ of $\cR$ lying on $S_1$ and let $\hat{t}$ be such $S_1(\hat{t}) = \b{x}_1$. Likewise, label the corners of $\cR$ as $\b{x}_i$ and let $\b{v}_i = \b{x}_{i+1}-\b{x}_{i}$ as before. Next, we let $\cR=\cR_{\hat{t}}$ and for $t>\hat{t}$, $\cR_{t}$ is the region with $S_1(t)\in \partial \cR_t$ with sides which are segments 
\[
L_{t}^i = \{S_i(\tau_i^t) + s\b{v}_i| s \in [0,s_i^t] \}
\]
and corners lying on each $S_i(t)$. In addition, as we take $t\rightarrow \infty$, we take $x^* \rightarrow 0$ and $y^* \to 0$. \Cref{toftau} implies that all the corners of $\partial \cR_t$ moves outwards along the curves $S_i(t)$ as $t$ increases. Therefore, we obtain a nested family of regions (with disjoint boundaries) that covers $\bR^2_{>0}$. All of these regions satisfy the condition that $\b{w}\cdot \b{n}_i(\b{x}) \geq 0$ for $\b{w} \in K(N_i)$, where $\b{n}_i(\b{x})$ is the clockwise normal to the curve at $\b{x} \in \partial \cR_{t}$. Therefore, they are all forward invariant under the $\cN$-cone differential inclusion $\cK(\cN)$. They are then also forward invariant under any differential inclusion which is embedded in $\cK(\cN)$.
\end{proof}

\section{Polynomial dynamical systems with variable coefficients}\label{applemmas}
\subsection{Definitions}
A \emph{variable $\ka$ polynomial dynamical system} (\emph{v$\ka$-polynomial dynamical system}) is a dynamical system on $\bR^d_{>0}$ which can be written
\begin{equation}\label{vkpls}
\dot{\b{x}} = \b{f}(\b{x},t) = \sum_{i=1}^{n} \ka_{i}(t)\b{x}^{\b{s}_i}\b{v}_{i}
\end{equation}
with $\b{x} \in \bR^d_{>0}$, $\b{s}_i,\b{v}_{i}\in \bR^d$ and $\b{x}^{\b{s}} = \prod_{l=1}^d x_l^{s_l}$ such that there is some $\varepsilon>0$ such that $\varepsilon\leq \ka_{i}(t)\leq \frac{1}{\varepsilon}$ for all $i$ and all $t>0$. In particular, polynomial dynamical systems with constant coefficients are a special case of v$\ka$-polynomial dynamical systems
.~\footnote{
Note that any polynomial dynamical system with constant coefficients can be written $\b{\dot{x}}~=~\sum_{i=1}^n\b{x}^{\b{s}_i}\b{v}_i$.
}

In order to investigate the geometric properties of a v$\ka$-polynomial dynamical system, we can use a \emph{Euclidean embedded graph}, as introduced in \cite{gheorgheGAC}.
\begin{definition}\label{GEGnotat}
A \emph{Euclidean embedded graph} $G = (\cV,\cE)$ is a finite directed graph whose nodes $\cV$ are labeled with distinct elements of a finite set $Y \subset \bR^d$.
\end{definition}
We define, for each edge $e \in \cE$, the \emph{source vector} $\b{s}(e)\in Y$ to be the label of the source node of $e$, the \emph{target vector} $\b{t}(e) \in Y$ to be the label of the target node of $e$, and the \emph{reaction vector} $\b{v}(e) = \b{t}(e) - \b{s}(e)$, to be the vector in $\bR^d$ from the label of the source to the label of the target. These definitions are inspired by the language of reaction network theory, with the source vector corresponding to the ``source complex" and the target vector corresponding the ``product complex" \cite{mfeinlec}\cite{cranaz}\cite{guna}\cite{hornandjackson}. Given a Euclidean embedded graph with edge set $\cE$, we can \emph{generate} the v$\ka$-polynomial dynamical system
\[
\dot{\b{x}} = \b{f}(\b{x},t) = \sum_{e\in \cE} \ka_e(t) \b{x}^{\b{s}(e)}\b{v}(e)
\]
by making an arbitrary choice of $\{\ka_{e}(t)\}$ that satisfies $\varepsilon\leq \ka_{e}(t)\leq \frac{1}{\varepsilon}$ for all $t$. If a v$\ka$-polynomial dynamical system can be constructed in such a way for some Euclidean embedded graph $G$, we then say that $\dot{\b{x}} = \b{f}(\b{x})$ is \emph{generated by $G$}. Notice that, depending on choice of $\ka_{e}(t)$, two different Euclidean embedded graphs $G$ and $G'$ may generate the same v$\ka$-polynomial dynamical system $\dot{\b{x}} = \b{f}(\b{x})$.

\begin{rmk} 
	While our analysis depends on node labels in the Euclidean embedded graph, we can sometimes obtain conclusions about the dynamics of a generated system using only information about the unlabeled graph, such as reversibility and weak reversibility \cite{hornandjackson}\cite{dave1}.
\end{rmk}

It will be useful to group terms in the sum \cref{vkpls} with the same exponent vectors $\b{s}(e) = \b{s}_i$. We can rewrite \cref{vkpls} as
\begin{equation}\label{vkpls2}
\dot{\b{x}} = \b{f}(\b{x},t) = \sum_{i=1}^n \b{x}^{\b{s}_i}\left(\sum_{j=1}^{m_i}\ka_{ij}(t)\b{v}_{ij}\right)
\end{equation}
by renumbering source and reaction vectors. Note that the value of $n$ in \cref{vkpls2} may be smaller than the value of $n$ in \cref{vkpls}.

Often of great interest in applications to biological and chemical modeling is whether a polynomial dynamical system satisfies a condition called \emph{permanence} or the weaker condition called \emph{persistence}. This paper is mainly concerned with permanence on $\bR^d_{> 0}$, the positive orthant. 

\begin{definition}\label{defperm2}
A d-dimensional dynamical system on $\bR^d_{> 0}$ is called \emph{permanent} if $\bR^d_{> 0}$ is forward invariant and there exists $\delta >0$ such that for any solution $\b{x}(t)$ with positive initial condition $\b{x}_0 \in \bR^d_{>0}$ we have
\[
\delta < \liminf_{t \rightarrow \infty}x_i(t) \quad \& \quad \limsup_{t \rightarrow \infty}x_i(t) < \frac{1}{\delta}
\]
for all $i \in \{1,...,d\}$.\cite{cranaz}
\end{definition}

Clearly, this condition implies persistence as introduced in \cref{defper}.

There is of course physical relevance to these conditions in chemical network models, and more generally in models of population dynamics. The question of permanence or persistence in a chemical setting is informally the question of whether or not some species in the network can be depleted.


We can also define a condition on v$\ka$-polynomial dynamical systems and euclidean embedded graphs similar to the tropically endotactic condition on differential inclusions.

\begin{definition}\label{tropendosys}
	Let $\b{\dot{x}} = \b{f}(\b{x},t)$ be a two dimensional v$\ka$-polynomial dynamical system. We say that $\b{\dot{x}} = \b{f}(\b{x},t)$ is \emph{tropically endotactic} if it is strictly embedded into a tropically endotactic differential inclusion on $\bR^2_{> 0}$. 
\end{definition}

\begin{definition}\label{tropendograph}
Let $G$ be a Euclidean embedded graph in $\bR^2$. We say that $G$ is \emph{tropically endotactic} if any v$\ka$-polynomial dynamical system generated by $G$ is tropically endotactic.
\end{definition}

\subsection{Permanence of tropically endotactic systems}
It is clear from \cref{thethm} that a two dimensional tropically endotactic system is persistent. We will prove also the stronger result that such systems are permanent. We show this by constructing a Lyapunov function outside of a compact attracting set, using the borders of the regions constructed in the proof of \cref{thethm} as level sets.

\begin{theorem}\label{realthm}
	Any two dimensional tropically endotactic v$\ka$-polynomial dynamical system is permanent.
\end{theorem}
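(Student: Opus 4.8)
The persistence half of the statement is immediate from \cref{thethm}, since a strictly embedded system is in particular embedded; the real task is to upgrade the \emph{non-uniform} lower and upper bounds of \cref{thethm} (each trajectory only stays in whichever region $\cR_{t_0}$ contains its initial point) to the \emph{single} compact attracting region demanded by \cref{defperm2}. My plan is to turn the nested family $\{\cR_t\}_{t\ge\hat t}$ of forward invariant regions produced in the proof of \cref{thethm} into the level sets of a Lyapunov function. Since the boundaries $\partial\cR_t$ are disjoint and their union is $\bR^2_{>0}\setminus\cR_{\hat t}^{\circ}$, and since each corner moves continuously and strictly monotonically outward along its curve $S_i$ as $t$ grows (by \cref{toftau}), the rule $V(\b{x})=t \iff \b{x}\in\partial\cR_t$ defines a continuous, proper function on $\bR^2_{>0}\setminus\cR_{\hat t}^{\circ}$ whose sublevel sets are the compact regions $\cR_T$. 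First I would record that $V$ is well defined and that $\cR_{\hat t}$ is the candidate attracting region.

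The crucial gain from \emph{strict} embedding (\cref{strictemb}, \cref{tropendosys}) is that the field crosses every level set \emph{strictly} inward. Each edge of $\partial\cR_t$ is a supporting line of the relevant cone $K(N_i)$, so its inward unit normal $\b{n}$ lies in the dual cone $K(N_i)^{*}\setminus\{\b{0}\}$; if $\b{f}(\b{x},t)\in K(N_i)^{\circ}$ then $\b{f}(\b{x},t)\cdot\b{n}>0$ strictly, because a nonzero dual vector is strictly positive on the interior of a cone. Along the transition strips $\mathit{fat}_{\fp}(N_q)$ the face condition of \cref{tropendo} collapses the active family of cones to $K(N_q)$, so the same conclusion holds, and at corners, which lie in the interior of a single region $\mathit{fat}_{\fp}(N_i)$, both adjacent inward normals lie in $K(N_i)^{*}$, so every $\b{w}\in K(N_i)^{\circ}$ strictly decreases $V$ in the one-sided sense. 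I would then make this strict decrease \emph{uniform} on any compact annulus $\cA=\overline{\cR_T\setminus\cR_{\hat t}}$: because $\ka_i(t)\in[\varepsilon,1/\varepsilon]$, the admissible values of $\b{f}(\b{x},\cdot)$ form a compact set depending continuously on $\b{x}$, and the normals together with $\|\nabla V\|$ vary continuously over the compact $\cA$; hence $\b{f}\cdot\b{n}$ has a positive lower bound and the upper right Dini derivative of $V$ along any trajectory is bounded above by some $-c(\cA)<0$.

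With this in hand the conclusion is short. Given any solution with $\b{x}_0\in\bR^2_{>0}$, choose $t_0$ with $\b{x}_0\in\cR_{t_0}$ (possible since the family covers $\bR^2_{>0}$); forward invariance, inherited from \cref{thethm}, keeps the trajectory in the compact set $\cR_{t_0}\subset\bR^2_{>0}$ for all forward time, so the solution exists globally and stays away from $\partial\bR^2_{>0}$. If the trajectory never entered $\cR_{\hat t}$ it would remain forever in the compact annulus $\cA=\overline{\cR_{t_0}\setminus\cR_{\hat t}}$, where the Dini estimate gives $V(\b{x}(t))\le V(\b{x}_0)-c(\cA)\,t\to-\infty$, contradicting $V\ge\hat t$ on $\cA$. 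Hence every trajectory enters $\cR_{\hat t}$ in finite time and, by forward invariance of $\cR_{\hat t}$, remains there. Taking $\delta$ to be the smaller of the distance from $\cR_{\hat t}$ to the coordinate axes and the reciprocal of an upper bound for the coordinates of points in $\cR_{\hat t}$ yields the uniform bounds of \cref{defperm2}, proving permanence.

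I expect the main obstacle to be the second paragraph: rigorously promoting the pointwise strict crossing to a \emph{uniform} negative Dini derivative, which requires simultaneously controlling the nonsmoothness of $V$ at the polygon corners, the transition strips where $\b{F}(\b{x})$ is a union of cones (handled through the face-containment hypothesis), and the time dependence of the field (handled through compactness of the range of the $\ka_i$). Everything else, namely the construction of $V$, global existence of solutions, and the final contradiction, is routine once this uniform descent estimate is in place.
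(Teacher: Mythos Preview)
Your proposal is correct and follows essentially the same route as the paper: define a Lyapunov function from the nested family $\{\cR_t\}$ (the paper's $\Lambda(\b{x})=\inf\{t\ge\hat t:\b{x}\in\cR_t\}$ is your $V$), use strict embedding to obtain strict inward crossing of each edge, and promote this to a uniform negative decrease rate on a compact annulus, forcing every trajectory into a fixed $\cR_{\tilde t}$. The paper carries out the nonsmooth step at the polygon corners via subgradients of lower-$C^1$ functions (Rockafellar--Wets, Theorems~10.6 and~10.48), treating convex and reflex corners separately; this is one concrete way to execute exactly the obstacle you flag in your final paragraph.
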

\begin{proof}
	If $\b{\dot{x}}= \b{f}(\b{x},s)$ is tropically endotactic, then there exists a complete fan $\cN$ and tropically endotactic $\cN$-cone differential inclusion  $\cK(\cN)$ into which $\b{\dot{x}} = \b{f}(\b{x},s)$ is strictly embedded. For this differential inclusion, we construct a family of nested, forward invariant regions $\cR_t$ with disjoint boundaries such that they cover $\bR^2_{>0}$, as in the last step of the proof of \cref{thethm}. We do so by constructing one such region $\cR_{\hat{t}}$ and showing that there exists a nested continuous family of regions which contain $\cR_{\hat{t}}$. We may also assume that $\mathit{fat}_{\fp}(\b{1})\subset \cR_{\hat{t}}^{\circ}$. We will show that this family of invariant regions can be used to define a Lyapunov function for the system $\b{\dot{x}} = \b{f}(\b{x},s)$. Define   
	\begin{equation}
	\Lambda(\b{x})= \inf\{t \geq \hat{t}|\b{x} \in \cR_t\}
	\end{equation}
	Note that each region $\cR_t$ is polygonal, and $\Lambda$ is smooth everywhere except at the corner points of these polygons. Recall from the proof of \cref{thethm} that these corner points lie on curves denoted $S_i(t)$. Also, if $\Lambda(\b{x}(0)) = t^*$ and $\b{x}(s) \in \cR_{t^*}\setminus \cR_{\hat{t}}$, then $\|\dot{\b{x}}(s)\|>\zeta$ for some $\zeta>0$. This is because $\mathit{fat}_{\fp}(\b{1})\cap (\cR_{t^*}\setminus \cR_{\hat{t}})= \emptyset$, which implies that $\b{\dot{x}}$ belongs to the interior of a cone which is not the whole of $\bR^2$, so $\b{\dot{x}} \neq 0$ on the (compact) closure of $\cR_{t^*}\setminus \cR_{\hat{t}}$. 
	
	Choose some $\tilde{t} \in (\hat{t},t^*)$ and $\epsilon\in (0,\tilde{t}-\hat{t})$. We will show that if $\b{x}(s)$ is a solution of $\b{\dot{x}} = \b{f}(\b{x},s)$ with $\Lambda(\b{x}(0)) = t^*$, then $\b{x}(s)$ enters the forward invariant region $\cR_{\tilde{t}}$. We do this by showing that $\Lambda$ is a strict Lyapunov function when restricted to $\bR^2_{>0} \setminus \cR_{\hat{t}+\epsilon} =: (\cR_{\hat{t}+\epsilon})^c$. For this, we prove that if $\b{x}(s) \in \ol{\cR_{t^*}\setminus \cR_{\hat{t}+\epsilon}}$, then
	\begin{equation}\label{bound}
	\limsup_{s\rightarrow s_0}\frac{\Lambda(\b{x}(s))-\Lambda(\b{x}(s_0))}{s-s_0} < -\eta
	\end{equation}
	for some $\eta >0$. We do so by showing that $\nabla \Lambda \cdot \dot{\b{x}} < -\eta$ when $\b{x}(s_0)$ is a smooth point of $\Lambda$ and that
	\begin{equation}\label{neg}
	\limsup_{s\rightarrow s_0}\frac{\Lambda(\b{x}(s))-\Lambda(\b{x}(s_0))}{s-s_0} < -\eta
	\end{equation}
	when $\b{x}(s_0)$ in a neighborhood of some $S_i(t)$. Together, these imply that \cref{bound} holds.
	
	First consider any compact region contained in $\ol{\cR_{t^*}\setminus \cR_{\hat{t}+ \epsilon}}$ which does not intersect any $S_i(t)$. Note, $\Lambda$ is smooth in such regions and $\nabla \Lambda$ is precisely the outward normal to $\partial \cR_t$. The choice of edges of $\partial \cR_t$ as supporting lines of $K(N_i)$, along with the strict embedding of $\b{\dot{x}} = \b{f}(\b{x},s)$ into $\cK(\cN)$, guarantees that $\nabla \Lambda \cdot \dot{\b{x}} < 0$. Compactness of the region then ensures that $\nabla \Lambda \cdot \dot{\b{x}} < -\eta$ in this region for some $\eta >0$.
	
	To deal with the curves $\{S_i(t)|1<t<\infty\}$ along which $\Lambda$ is not differentiable, we must draw upon techniques of convex analysis, as detailed in \cite{varan}.
	
	For each curve $S_i$, let $\Lambda_{i1}$ and $\Lambda_{i2}$ be the smooth functions with constant gradient direction (and therefore with straight-line level sets) such that $\Lambda = \Lambda_{i1}$ on one side of $\{S_i(t)|1<t<\infty\}$ and $\Lambda = \Lambda_{i2}$ on the other and these functions extend onto a neighborhood of $S_i$. 
	
	In order to use some convex analysis results about lower $C^1$ functions, we will separate the proof into two cases. First, if the interior angle of each $\cR_t$ along the curve $S_i$ is less than or equal to $\pi$, then $\Lambda(\b{x}) = \max\{\Lambda_{i1}(\b{x}),\Lambda_{i2}(\b{x})\}$ in some neighborhood of the curve $S_i$, and so $\Lambda$ is lower $C^1$ \cite{varan}. Second, if the interior angle of each $\cR_t$ along the curve $S_i$ is greater than $\pi$, then $-\Lambda(\b{x}) = \max\{-\Lambda_{i1}(\b{x}),-\Lambda_{i2}(\b{x})\}$ and so $-\Lambda(\b{x})$ is lower $C^1$.
	
	 In the first case, consider a compact neighborhood $\cS_i$ of $\{S_i(t)|1\leq t \leq \infty\} \cap (\cR_{t^*}\setminus \cR_{\hat{t}})$ contained in some $\mathit{fat}_{\fp}(N_j)$ in which each $\cR_{t}$ is convex, and so $\Lambda$ is lower $C^1$. Recall that, for $t$ large enough, $S_i(t) \subset \mathit{fat}_{\fp}(N_j)$ for some $j$. Then the \emph{(general) subgradient} \cite{varan} of $\Lambda$ at $\b{x}$ along the curve $S_i(t)$ is the set
	\begin{equation}\label{subgrad}
	\partial \Lambda (\b{x}) = \{a\nabla \Lambda_{i1}(\b{x}) + (1-a)\nabla \Lambda_{i2}(\b{x})|a \in [0,1]\}
	\end{equation}
	 The function $\Lambda$ is strictly continuous in $\bR^2_{>0}$, so we can apply the chain rule for subgradients (Theorem 10.6 in \cite{varan}) to obtain that
	\begin{equation}
	\partial (\Lambda\circ \b{x})(s) \subseteq \{\b{w}\cdot \dot{\b{x}}(s)|\b{w}\in\partial \Lambda(\b{x}(s))\}
	\end{equation}
	From the construction of the regions $\cR_t$ and the fact that $\b{\dot{x}}$ must be contained in the strict interior of the cone $K(N_j)$ of the differential inclusion, we have $\b{\dot{x}} \cdot \nabla \Lambda_{i1} <0$ and $\b{\dot{x}} \cdot \nabla \Lambda_{i2} <0$ on $\cS_i$. This is because the edges of $\cR_t$ and so the level sets of $\Lambda_{i1}$, $\Lambda_{i2}$ were chosen to be supporting lines to $K(N_j)$. Since $\cS_i$ is compact, it follows that there is some $\eta>0$ such that $\b{\dot{x}} \cdot \nabla \Lambda_{i1} <-\eta$ and $\b{\dot{x}} \cdot \nabla \Lambda_{i2} <-\eta$ in $\cS_i$. Therefore, according to \cref{subgrad} there is a $\eta>0$ such that $\b{w}\cdot \dot{\b{x}}(s)<-\eta<0$ for all $\b{w}\in \partial \Lambda(\b{x}(s))$ in $\cS_i$.
	
	Because $\Lambda$ is lower $C^1$, from a generalized mean value theorem (Theorem 10.48 in \cite{varan}) applied to the function $(\Lambda \circ \b{x})(s)$ it follows that for all $s$ in some neighborhood of $s_0$ there is some $\tau_s\in [s_0,s]$ such that
	\begin{equation}
	\Lambda(\b{x}(s))- \Lambda(\b{x}(s_0)) = \sigma_s (s-s_0)\text{ for some scalar }\sigma_s \in \partial(\Lambda\circ \b{x})(\tau_s)
	\end{equation}
	But, we have seen that $\sigma_s < -\eta$. Therefore, in $\cS_i$, we have that 
	\[
	\limsup_{s\rightarrow s_0}\frac{\Lambda(\b{x}(s))-\Lambda(\b{x}(s_0))}{s-s_0} < -\eta
	\]
	
	In the second case, we have that the interior angle of each $\cR_t$ along the curve $S_i$ is greater than $\pi$. Again, we take this neighborhood to be contained in $\mathit{fat}_{\fp}(N_j)$. Then, completely analogously with the first case, we can consider the lower $C^1$ function $-\Lambda(\b{x})$ and we obtain that $-\Lambda$ is strictly increasing along trajectories $\b{x}(s)$. This implies that $\Lambda$ is strictly decreasing along trajectories $\b{x}(s)$, and moreover we have
	\[
	\limsup_{s\rightarrow s_0}\frac{\Lambda(\b{x}(s))-\Lambda(\b{x}(s_0))}{s-s_0} < -\eta
	\]	
	for some $\eta >0$.

	Finally, note that the three types of regions we have considered (in which $\Lambda$ is smooth, $\Lambda$ is lower $C^1$, or $-\Lambda$ is lower $C^1$) cover the entirety of $\ol{\cR_{t^*}\setminus \cR_{\hat{t}+\epsilon}}$. Therefore, we obtain that \cref{bound} holds on $\ol{\cR_{t^*}\setminus \cR_{\hat{t}+\epsilon}}$, and so $\Lambda$ decreases along trajectories $\b{x}(s)$ at a rate that is bounded away from $0$. Therefore, solutions to $\b{\dot{x}} = \b{f}(\b{x},s)$ must enter the forward invariant region $\cR_{\tilde{t}}$. 
\end{proof}
\section{Example systems}\label{examples}
\subsection{A modified Lotka-Volterra system}
It has been shown that two dimensional weakly reversible, or even \emph{endotactic} systems are permanent \cite{cranaz}.  \Cref{realthm} can be used to conclude permanence for systems which are neither weakly reversible nor endotactic. 

Consider the following modified version of the classical Lotka-Volterra predator-prey model,
\begin{equation}\label{mLV}
\b{\dot{x}} = \ka_1(t) x \begin{pmatrix}
1\\ \epsilon_1
\end{pmatrix} + \ka_2(t) xy \begin{pmatrix}
-1 \\ 1
\end{pmatrix} + \ka_3(t) y \begin{pmatrix}
\epsilon_2 \\ -1
\end{pmatrix}
\end{equation}
for $\b{x} = (x,y)^T \in \bR^2_{>0}$, such that $\epsilon_1,\epsilon_2 \in (0,1)$ and there exists $\varepsilon > 0$ with $\varepsilon < \ka_i(t) < \frac{1}{\varepsilon}$. Note that if $\epsilon_1 = \epsilon_2 =0$, the system \cref{mLV} becomes a variable $\ka$ version of the classical Lotka-Volterra model, and is \emph{not} persistent, and therefore not permanent \cite{cranaz}.

We will show that the system \cref{mLV} is permanent by embedding it into a tropically endotactic differential inclusion. This means we must find an exponential fan $\cD = \{D_i\}$ and cones $K(D_i)$ (and parameter $\fp$) such that $\b{x} \in \mathit{fat}_{\fp}(D_i) \Rightarrow \b{\dot{x}} \in K(D_i)^{\circ}$, and so that $K(D_i)$ does not intersect the $\delta$-escape directions $B^{\delta}(D_i)$ for some $\delta$ (see \cref{tropendo}). We will construct $\cD$ and the cones $K(D_i)$ by considering the relative magnitude of the three monomials $x$, $xy$, and $y$.
 
We construct a complete fan $\widehat{\cD}$ such that the ordering of these monomials is constant on the (relative) interiors of the regions $D_i$ of the exponential fan $\cD = \exp(\widehat{\cD})$. To do this, we use the three curves $x = xy$, $xy = y$, and $x = y$, which give rise to the one dimensional members of $\cD$. The regions $\mathit{fat}_{\fp}(D_i)$ are shown in \cref{egraphmLV}(b) and (c) in blue and white. We can calculate the cones $B^{\delta}(D_i)$ by writing $\fp$-escape curves as 
\begin{equation}\label{curve_calc1}
C(t) = \exp\left(\b{r}t + g(t) \b{p}\right) = \exp(r_1 t + g(t) p_1)\b{\hat{x}} + \exp(r_2 t + g(t) p_2)\b{\hat{y}}
\end{equation}
where $g(t) = 1- \alpha e^{-\beta t}$, $\beta \geq 0$, and $\alpha \in \bR$.

For example, consider the three cones such that $(1,1) \in \widehat{D}_i$ \footnote{$\mathit{fat}_{\fp}(D_0)$ shown in blue, $\mathit{fat}_{\fp}(D_{-1})$ and $\mathit{fat}_{\fp}(D_{1})$ shown in white in the upper right of \cref{egraphmLV} (c)}. Take $\b{r} = (1,1)$ and $\b{p} = (-p,p)$, so a $\fp$-escape curve can be written
\begin{equation}\label{curve_calc2}
C(t) = e^{t + g(t)(-p)}\b{\hat{x}} + e^{t + g(t)p}\b{\hat{y}}
\end{equation}
and so
\begin{equation}\label{curve_calc3}
\left[C'(t)\right]_n = \left[(1 - g'(t)p)e^{t - g(t)p}\b{\hat{x}} + (1 + g'(t)p)e^{t + g(t)p}\b{\hat{y}}\right]_n
\end{equation}
We then multiply by the scalar $e^{-t}$ to see that
\begin{equation}\label{curve_calc4}
\left[C'(t)\right]_n = \left[(1 - g'(t)p)e^{-g(t)p}\b{\hat{x}} + (1 + g'(t)p)e^{g(t)p}\b{\hat{y}}\right]_n
\end{equation}
and so
\begin{equation}\label{curve_calc5}
\lim_{t\rightarrow \infty}\left[C'(t)\right]_n = \left[e^{-p}\b{\hat{x}} + e^{p}\b{\hat{y}}\right]_n
\end{equation}
In the region $\mathit{fat}_{\fp}(D_0)$ in \cref{egraphmLV} (c) (shown in blue), it is true that for any choice of $p$, there is small enough $\fp$ so that $C(t)$ as above is a $\fp$-escape curve (see \cref{logC}), and so $B^{\delta}(D_0)$ approaches $\mathit{Cone}(\b{\hat{x}},\b{\hat{y}})$. Considering $D_{-1}$, as $\fp$ approaches $0$, we must have $p$ approaching $+\infty$ (and so $[C'(t)]_n$ approaching $\b{\hat{y}}$) in order to ensure that $C(t) \subset \mathit{fat}_{\fp}(D_{-1})$ for large $t$. Checking other possibilities of $\b{r} \in \widehat{D}_{-1}$ reveals that $B^{\delta}(D_{-1})$ is as shown in \cref{egraphmLV} (c).
\begin{figure}
	\begin{center}
	\begin{tikzpicture}[scale = 1.2]
		\draw[->] (0,0)--(3,0);
		\draw[->] (0,0)--(0,3);
		\draw (0,0) -- (3,3) node[pos = 1,right] {$\widehat{D}_0$};
		\draw[-latex, thick] (0,0) -- (1,1) node[pos = 0.8,below right] {$\b{r}$};
		\draw[dashed] (0,0.9)--(2.1,2.9);
		\draw[-latex, thick] (1,1)--(0.6,1.4) node[pos = 0.8,below left] {$\b{p}$};
		\fill[color = blue, opacity = 0.3] (0.5,1.5)--(2,3) to [out = -20, in = 160] (2.8,2.8) to [out = -20, in = 160] (3,2)--(1.5,0.5) --(0.5,0.5);
		\draw[thick] (1,1.2) to [out = 70, in = 225] (2.1,2.9) node[left] {$\log(C(t))$};
	\end{tikzpicture}
	\end{center}
\caption{The cone $\widehat{D}_0$, the region $\mathit{fat}_{\fp}(\widehat{D}_0)$ (blue) and $\log(C(t))$ for a $\fp$-escape curve $C$. Notice that $\{\b{r}t + \b{p}|t>t_0\}\subset \mathit{fat}_{\fp}(\widehat{D}_0)$ if $\|\b{p}\|< |\log(\fp)|$, while $\{\b{r}t + \b{p}|t>t_0\}\subset \mathit{fat}_{\fp}(\widehat{D}_{-1})$ if $\|\b{p}\|> |\log(\fp)|$.}\label{logC}
\end{figure}

In choosing the cones $K(D_i)$, the key observation is that for small $\fp$, the largest monomial is \emph{much} larger than the others when $(x,y)^T \in \mathit{fat}_{\fp}(D_i)$, and so this term ``dominates" the sum \cref{mLV}. For example, when $x \gg xy \gg y$, then 
\[
[\b{\dot{x}}]_n \approx \left[\begin{pmatrix}
1\\ \epsilon_1
\end{pmatrix}\right]_n
\]
and so in this region\footnote{The white region on the bottom right side of \cref{egraphmLV} (b).} we take $K(D_i)$ to be a cone of directions close to the direction $(1,\epsilon_1)^T$. Furthermore, because $xy$ is the second largest monomial, we can take $K(D_i)$ to be only vectors to the counterclockwise side of $(1,\epsilon_1)^T$, the same side as $(-1,1)$. When two or more dominant monomials are of the same order of magnitude, we define $K(D_i)$ to be the cone generated by their associated reaction vectors\footnote{As in the blue region in the right side of \cref{egraphmLV} (b), where $x$, $xy$ are the dominant monomials, and their orders of magnitude are the same.}. We can conclude that \cref{mLV} is permanent for any allowable choice of $\ka_i(t)$. This system is not endotactic, and in particular not weakly reversible.

Note that the differential inclusion constructed in this way is \emph{not} tropically endotactic for $\epsilon_2 \geq 1$, and indeed the system \cref{mLV} is not permanent in that case. The differential inclusion remains tropically endotactic for $\epsilon_1 \geq 1$, and so the system \cref{mLV} is permanent if $\epsilon_1\geq 1$.

\begin{figure}
	\begin{subfigure}[t]{0.32\textwidth}
		\begin{center}
			\begin{tikzpicture}[scale = 1.6]
			\draw[->] (-0.1,0)--(2,0);
			\draw[->] (0,-0.1)--(0,2);
			\shade[shading=ball, ball color=blue] (1,0) circle(.05);
			\draw[-{Latex[length=2mm,width=2mm]}] (1,0)--(2,0.2);
			\shade[shading=ball, ball color=blue] (2,0.2) circle(.05);
			\shade[shading=ball, ball color=blue] (1,1) circle(.05);
			\draw[-{Latex[length=2mm,width=2mm]}] (1,1)--(0,2);
			\shade[shading=ball, ball color=blue] (0,2) circle(.05);
			\shade[shading=ball, ball color=blue] (0,1) circle(.05);
			\draw[-{Latex[length=2mm,width=2mm]}] (0,1)--(0.2,0);
			\shade[shading=ball, ball color=blue] (0.2,0) circle(.05);
			\end{tikzpicture}
		\end{center}
	\captionsetup{width=0.9\textwidth}
	\caption{Euclidean embedded graph which generates the system \cref{mLV}}
	\end{subfigure}
	\begin{subfigure}[t]{0.32\textwidth}
	\begin{center}
		\begin{tikzpicture}[scale = 1.6]
			\draw[->] (-0.1,0)--(2,0);
			\draw[->] (0,-0.1)--(0,2);
			\fill[color = blue, opacity = 0.5] (0,0)--(2,1.9)--(2,2)--(1.9,2);
			\fill[color = blue, opacity = 0.5] (0.9,0)--(0.9,2)--(1.1,2)--(1.1,0);
			\fill[color = blue, opacity = 0.5] (0,0.9)--(2,0.9)--(2,1.1)--(0,1.1);
			\draw[-latex, color = orange] (1.3,0.15)--(1.8,0.25);
			\fill[opacity = 0.5, color = orange] (1.3,0.15)--(1.8,0.25)--(1.8,0.3);
			\draw[-latex, color = orange] (0.3,0.15)--(0.8,0.25);
			\fill[opacity = 0.5, color = orange] (0.3,0.15)--(0.8,0.25)--(0.8,0.2);
			\draw[-latex, color = orange] (0.15,1.8)--(0.25,1.3);
			\fill[opacity = 0.5, color = orange] (0.15,1.8)--(0.25,1.3)--(0.2,1.3);
			\draw[-latex, color = orange] (0.15,0.8)--(0.25,0.3);
			\fill[opacity = 0.5, color = orange] (0.15,0.8)--(0.25,0.3)--(0.3,0.3);
			\draw[-latex, color = orange] (1.9,1.2)--(1.6,1.5);
			\fill[opacity = 0.5, color = orange] (1.9,1.2)--(1.6,1.5)--(1.63,1.53);
			\draw[-latex, color = orange] (1.5,1.6)--(1.2,1.9);
			\fill[opacity = 0.5, color = orange] (1.5,1.6)--(1.2,1.9)--(1.17,1.87);
			\draw[-latex, color = orange] (1.7,1.7)--(1.5,1.9);
			\fill[opacity = 0.5, color = orange] (1.47,1.87)--(1.7,1.7)--(1.53,1.93);
			\draw[-latex, color = orange] (1,0.4)--(1.25,0.45);
			\fill[opacity = 0.5, color = orange] (1.22,0.42)--(1,0.4)--(1.28,0.48);
			\draw[-latex, color = orange] (0.4,1)--(0.45,0.75);
			\fill[opacity = 0.5, color = orange] (0.42,0.72)--(0.4,1)--(0.48,0.78);
			\draw[latex-latex, color = orange](0.75,0.55)--(0.5,0.5)--(0.55,0.25);
			\fill[opacity = 0.5, color = orange] (0.75,0.55)--(0.5,0.5)--(0.55,0.25);
			\draw[latex-latex, color = orange](1.75,1.05)--(1.5,1)--(1.35,1.15);
			\fill[opacity = 0.5, color = orange] (1.75,1.05)--(1.5,1)--(1.35,1.15);
			\draw[latex-latex, color = orange](0.85,1.65)--(1,1.5)--(1.05,1.25);
			\fill[opacity = 0.5, color = orange] (0.85,1.65)--(1,1.5)--(1.05,1.25);
		\end{tikzpicture}
	\end{center}
	\captionsetup{width=0.9\textwidth}
	\caption{$\cD$-cone differential inclusion $\cK(\cD)$, shown in orange.}
	\end{subfigure}
	\begin{subfigure}[t]{0.32\textwidth}
	\begin{center}
		\begin{tikzpicture}[scale = 1.6]
		\draw[->] (-0.1,0)--(2,0);
		\draw[->] (0,-0.1)--(0,2);
		\fill[color = blue, opacity = 0.5] (0,0)--(2,1.9)--(2,2)--(1.9,2);
		\fill[color = blue, opacity = 0.5] (0.9,0)--(0.9,2)--(1.1,2)--(1.1,0);
		\fill[color = blue, opacity = 0.5] (0,0.9)--(2,0.9)--(2,1.1)--(0,1.1);
		\draw[color = red, latex-latex] (1.6,2)--(1.6,1.6)--(2,1.6);
		\fill[color = red, opacity = 0.5]  (1.6,2)--(1.6,1.6)--(2,1.6);
		\draw[color = red, -latex] (1.3,1.6)--(1.3,2);
		\fill[color = red, opacity = 0.5] (1.3,1.6)--(1.3,2)--(1.4,2);
		\draw[color = red, -latex] (1,1.5)--(1,1.9);
		\fill[color = red, opacity = 0.5] (0.95,1.9)--(1,1.5)--(1.05,1.9);
		\draw[color = red, latex-latex] (0.5,1.6)--(0.5,2);
		\fill[color = red, opacity = 0.5] (0.5,1.6)--(0.5,2)--(0.4,2)--(0.4,1.6);
		\draw[color = red, latex-latex] (0.3,1.1)--(0.3,0.9);
		\fill[color = red, opacity = 0.5] (0.3,1.1)--(0.3,0.9)--(0.1,0.9)--(0.1,1.1);
		\draw[color = red, latex-latex] (0.2,0.8)--(0.2,0.4);
		\fill[color = red, opacity = 0.5] (0.2,0.8)--(0.2,0.4)--(0.1,0.4)--(0.1,0.8);
		\draw[color = red, latex-latex] (0.1,0.35)--(0.35,0.35)--(0.35,0.1);
		\fill[color = red, opacity = 0.5] (0.1,0.35)--(0.35,0.35)--(0.35,0.1);
		\draw[color = red, latex-latex] (0.8,0.2)--(0.4,0.2);
		\fill[color = red, opacity = 0.5] (0.8,0.2)--(0.4,0.2)--(0.4,0.1)--(0.8,0.1);
		\draw[color = red, latex-latex] (0.9,0.3)--(1.1,0.3);
		\fill[color = red, opacity = 0.5] (0.9,0.3)--(1.1,0.3)--(1.1,0.1)--(0.9,0.1);
		\draw[color = red, -latex] (1.6,1.3)--(2,1.3);
		\fill[color = red, opacity = 0.5] (1.6,1.3)--(2,1.3)--(2,1.4);
		\draw[color = red, -latex] (1.5,1)--(1.9,1);
		\fill[color = red, opacity = 0.5] (1.9,0.95)--(1.5,1)--(1.9,1.05);
		\draw[color = red, latex-latex] (1.6,0.5)--(2,0.5);
		\fill[color = red, opacity = 0.5] (1.6,0.5)--(2,0.5)--(2,0.4)--(1.6,0.4);
		\node at (2.1,2.1) {$D_0$};
		\node at (1.4,2.1) {$D_{-1}$};
		\node at (2.2,1.4) {$D_1$};
		\end{tikzpicture}
	\end{center}
	\captionsetup{width=0.9\textwidth}
	\caption{Cones $B^{\delta}(D_i)$ for the fan $\cD$, shown in red.}
\end{subfigure}
\caption{Analysis of the system \cref{mLV}. Inspection of (b) and (c) shows that the $\cD$-cone differential inclusion, shown in (b), is tropically endotactic, because for small enough $\delta$, none of the cones $K(D_i)$ of the differential inclusion shown in orange in (b) intersect the interior of the corresponding cone $B^{\delta}(D_i)$ shown in red in (c).}\label{egraphmLV}
\end{figure}
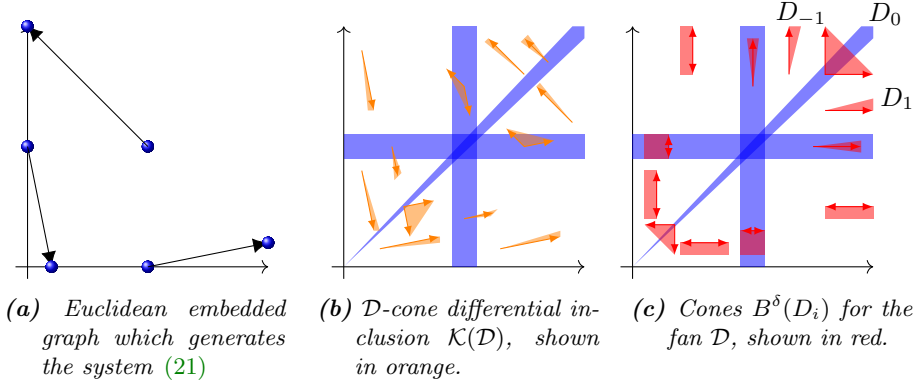

\subsection{An application to a chemical reaction system}
Consider the reaction network
\begin{equation}\label{5d}
\xymatrix @R = .5pc{
	Z+Y \ar[r]^{\ka_1} & 2X + 2Y\\
	W+X \ar[r]^{\ka_2} & X+Y \ar[r]^{\ka_3} & V
}
\end{equation}
If we assume that the rates of these reactions are given by a combination of mass action and Michaelis-Menten kinetics \cite{ekesh}, and in addition we know that there is some $\varepsilon>0$ such that $\varepsilon\leq W, Z \leq \frac{1}{\varepsilon}$, then we obtain the following dynamical system for the concentrations of $X$ and $Y$:

\begin{equation}\label{notpoly}
\frac{d}{dt}\begin{pmatrix}
x\\
y
\end{pmatrix} = \ka_1(t)y \begin{pmatrix}
2\\1
\end{pmatrix}
+ \frac{\ka_2(t)}{1+xy} x \begin{pmatrix}
0\\1
\end{pmatrix}
+ \ka_3(t) xy \begin{pmatrix}
-1\\ -1
\end{pmatrix}
\end{equation}

The system \cref{notpoly} is not a polynomial dynamical system due to the rational second term. However, multiplication by a positive scalar field does not change the permanence or persistence properties of a system. Therefore, we can replace \cref{notpoly} with the polynomial dynamical system \cref{rrobsys}, obtained by multiplying the right-hand side of \cref{notpoly} by the scalar field $(1+xy)$.

\begin{multline}\label{rrobsys}
\frac{d}{dt}\begin{pmatrix}
x\\
y
\end{pmatrix} = \ka_1(t)y \begin{pmatrix}
2\\1
\end{pmatrix}
+ \ka_1(t)xy^2 \begin{pmatrix}
2\\1
\end{pmatrix}
+ \ka_2(t) x \begin{pmatrix}
0\\1
\end{pmatrix}
\\+ \ka_3(t) xy \begin{pmatrix}
-1\\ -1
\end{pmatrix}
+ \ka_3(t) x^2y^2 \begin{pmatrix}
-1\\ -1
\end{pmatrix}
\end{multline}

It is not immediately apparent that this system should be permanent, or even that it should have bounded trajectories. Let $G$ be the Euclidean embedded graph shown in \cref{egraphex2} (a). Then $G$ generates the system \cref{rrobsys}. Again, we choose a fan based on a comparison of the relative magnitudes of the monomials. For this example, we only consider the which monomial is largest, rather than an ordering of the monomials as in the previous example. We construct the exponential fan $\cP$ by choosing regions on which the largest monomial does not change. Notice that $\widehat{\cP} = \log(\cP)$ is then the normal fan \cite{polytopes} for the convex hull of the source labels of $G$ ($\b{s}_i$ in figure \cref{egraphex2} (a)) making this fan simple to construct. We use cones $K(P_i)$ of directions which are close to the direction of the reaction vector associated with the largest monomial. The $\cP$-cone differential inclusion constructed in this way is tropically endotactic, and so we can conclude that the system \cref{rrobsys} is permanent, and so the system \cref{notpoly} is permanent as well.

Using a complete ordering of monomials, as we did for the system \cref{mLV} may allow us to choose smaller cones $K(D_i)$, and so has higher chances in general of resulting in a tropically endotactic differential inclusion. However, in this case it is sufficient to use the simpler exponential fan $\cP$.

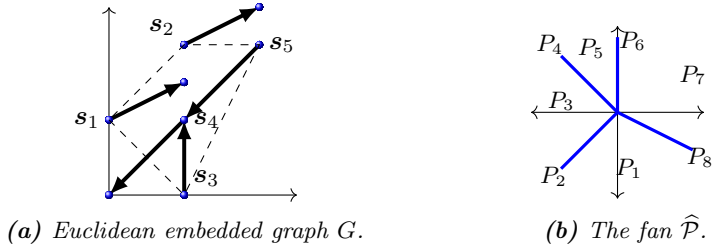
\begin{figure}
	\begin{center}
\begin{subfigure}[b]{0.45 \textwidth}
	\begin{center}
\begin{tikzpicture}[scale = 1]
	\draw[->] (-0.05,0) -- (2.5,0);
	\draw[->] (0,-0.05) -- (0,2.5);
	\draw[-latex,line width = 1.5pt] (0,1)--(1,1.5);
	\shade[shading=ball, ball color=blue] (0,1) circle(.05) node[left] {$\b{s}_1$};
	\shade[shading=ball, ball color=blue] (1,1.5) circle(.05);
	\draw[-latex,line width = 1.5pt] (1,2)--(2,2.5);
	\shade[shading=ball, ball color=blue] (1,2) circle(.05) node[above left] {$\b{s}_2$};
	\shade[shading=ball, ball color=blue] (2,2.5) circle(.05);
	\draw[-latex,line width = 1.5pt] (1,0)--(1,1);
	\shade[shading=ball, ball color=blue] (1,0) circle(.05) node[above right] {$\b{s}_3$};
	\draw[-latex,line width = 1.5pt] (2,2)--(1,1);
	\shade[shading=ball, ball color=blue] (2,2) circle(.05) node[right] {$\b{s}_5$};
	\draw[-latex,line width = 1.5pt] (1,1)--(0,0);
	\shade[shading=ball, ball color=blue] (1,1) circle(.05) node[right] {$\b{s}_4$};
	\shade[shading=ball, ball color=blue] (0,0) circle(.05);
	\draw[dashed] (0,1)--(1,0)--(2,2)--(1,2) -- cycle;
\end{tikzpicture}
\end{center}
\caption{Euclidean embedded graph $G$.}
\end{subfigure}
\begin{subfigure}[b]{0.45\textwidth}
	\begin{center}
		\begin{tikzpicture}[scale = 0.5]
		\draw[<->] (0,-2.3) -- (0,2.3);
		\draw[<->] (-2.3,0) -- (2.3,0);
		\draw[color = blue, very thick] (0,0)--(-1.5,-1.5);
		\draw[color = blue, very thick] (0,0)--(-1.5,1.5);
		\draw[color = blue, very thick] (0,0)--(0,2);
		\draw[color = blue, very thick] (0,0)--(2,-1);
		\node at(0.3,-1.5) {\small $P_1$};
		\node at(-1.7,-1.7) {\small$P_2$};
		\node at(-1.5,0.3) {\small$P_3$};
		\node at(-1.8,1.8) {\small$P_4$};
		\node at(-0.7,1.7) {\small$P_5$};
		\node at(0.4,1.9) {\small$P_{6}$};
		\node at(2,1) {\small$P_{7}$};
		\node at(2.2,-1.2) {\small$P_{8}$};
		\end{tikzpicture}
	\end{center}
	\caption{The fan $\widehat{\cP}$.}
\end{subfigure}
\end{center}
\caption{(a) Euclidean embedded graph that generates the polynomial dynamical system \cref{rrobsys}, with the convex hull of the source nodes shown dotted. (b) The normal fan $\widehat{\cP}$ for the dotted polygon in (a). The exponential fan $\cP = \exp(\widehat{\cP})$ gives regions in which the largest monomial does not change.}\label{egraphex2}
\end{figure}

To see that the polynomial dynamical system \cref{rrobsys} is tropically endotactic, we can check each cone $K(P_{i})$ and $B^{\delta}(P_{i})$ for some small $\delta$. A sample of the relevant analysis is demonstrated in \cref{permsyspartition}.
\begin{figure}[htp]
\begin{center}
\begin{subfigure}[b]{0.3\textwidth}
\begin{center}
\begin{tikzpicture}[scale = 1]
	\path[fill,color =red, opacity=0.4] (0,1)--(2,1)--(2,0)--(0,0)--(0,1);
	\draw[latex-latex, very thick, color = red] (0,1)--(2,1);
	\node at (1,0.5) {\small $B^{\delta}(P_1)$};
	\path[fill, opacity=0.4] (1,1)--(1.1,2)--(0.9,2)--(1,1);
	\draw[-latex, very thick] (1,1)--(1,2) node[pos = 1, above] {\small $K(P_1)$};
\end{tikzpicture}
\end{center}
\caption{}
\end{subfigure}\qquad
\begin{subfigure}[b]{0.3\textwidth}
\begin{center}
\begin{tikzpicture}[scale = 1]
	\path[fill,color =red, opacity=0.4] (0,1)--(1,1)--(1,0)--(0,0)--(0,1);
	\draw[latex-latex, color = red, very thick] (0,1)--(1,1)--(1,0);
	\node at (0.5,0.5) {\small $B^{\delta}(P_{2})$};
	\draw[latex-latex, very thick] (2,1.5)--(1,1) node[pos = 0.5,below right] {\small $K(P_{2})$}--(1,2) ;
	\path[fill,opacity = 0.4] (2,1.5)--(1,1)--(1,2)--(2,1.5);
\end{tikzpicture}
\end{center}
\caption{}
\end{subfigure}
\begin{subfigure}[b]{0.3\textwidth}
\begin{center}
\begin{tikzpicture}[scale = 1]
	\path[fill,color =red, opacity=0.4] (1,1)--(1,2)--(1.1,2)--(1,1);
	\draw[-latex, color = red, very thick] (1,1)--(1,2) node[pos = 1, left] {\textcolor{black}{\small $B^{\delta}(P_{7})$}};
	\path[fill, opacity=0.4] (1,1)--(0,0.1)--(0.1,0) -- (1,1);
	\draw[-latex,very thick] (1,1)--(0,0) node[pos = 1, above left] {\small $K(P_{7})$};
\end{tikzpicture}
\end{center}
\caption{}
\end{subfigure}
\end{center}
\caption{Permanence of \cref{rrobsys} can be concluded by determining the intersection of the cones $K(P_i)$ and $B^{\delta}(P_i)^{\circ}$. For every pair, this intersection must be empty. We show here a sample of the analysis. The remaining cones can be checked in the same way.}
\label{permsyspartition}
\end{figure}
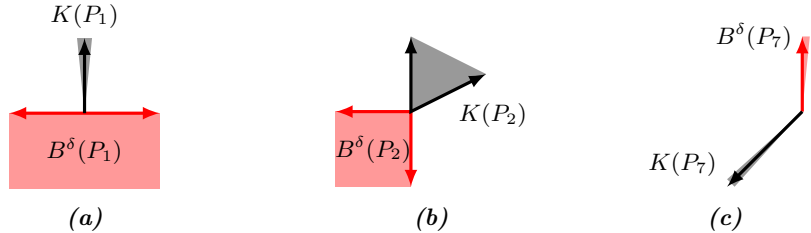

\subsection{Weakly reversible polynomial dynamical systems}

We can use \cref{realthm} to show that any weakly reversible system in two dimensions is permanent. A \emph{weakly reversible system} is a v$\ka$-polynomial dynamical system generated by a Euclidean embedded graph $G$ such that every edge of $G$ is contained in a (directed) cycle. We can show that a weakly reversible system is tropically endotactic using $\cT$, the fan of the toric differential inclusion used in \cite{gheorgheGAC}, Theorem 3.1. That theorem is
\begin{theorem}[Theorem 3.1 of \cite{gheorgheGAC} and Theorem 4.1 of \cite{gheorgheToricDI}]\label{toric}
Any weakly reversible v$\ka$-polynomial dynamical system can be embedded into a toric differential inclusion.
\end{theorem}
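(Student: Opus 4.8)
The plan is to construct the toric differential inclusion of $G$ explicitly as an $\cN$-cone differential inclusion in the sense of \cref{ncone} and then verify the embedding pointwise. First I would form the Newton polytope $P = \mathit{conv}\{\b{s}(e) : e \in \cE\}$ of the source vectors and let $\widehat{\cT}$ be its normal fan: each cone $\widehat{T} \in \widehat{\cT}$ is the set of directions $\b{X}$ along which a fixed face $F_{\widehat{T}}$ of $P$ maximizes the linear functional $\b{X} \mapsto \langle \b{s}, \b{X}\rangle$ over all sources $\b{s}$. Exponentiating gives the exponential fan $\cT = \exp(\widehat{\cT})$, and fattening as in \cref{fattening} produces the regions $\mathit{fat}_{\fp}(T)$. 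To each $T$ I would attach the cone $K(T) = \mathit{Cone}(\{\b{v}(e) : \b{s}(e) \in F_{\widehat{T}}\})$ generated by the reaction vectors whose sources are dominant on $\widehat{T}$, slightly enlarged by a thin solid cone; this data defines the toric differential inclusion $\cK(\cT)$. This is precisely the construction illustrated for the fan in \cref{egraphex2}, now carried out in full generality.

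Next I would reduce the embedding $\b{f}(\b{x},t) \in \bigcup_{\{T : \b{x} \in \mathit{fat}_{\fp}(T)\}} K(T)$ to a domination estimate. Writing $\b{X} = \log \b{x}$ and splitting the defining sum into dominant terms (those with $\b{s}(e) \in F_{\widehat{T}}$) and subdominant terms, the dominant part is a nonnegative combination of the generators of $K(T)$ and so lies in $K(T)$. Because $\varepsilon \le \ka_e(t) \le 1/\varepsilon$ uniformly in $t$, and because each subdominant monomial $\b{x}^{\b{s}(e)} = e^{\langle \b{s}(e),\b{X}\rangle}$ is smaller than the dominant monomials by a gap that grows as $\b{X}$ moves into the interior of $\widehat{T}$, the subdominant contribution is a bounded perturbation that becomes arbitrarily small deep inside $\widehat{T}$. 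Choosing $\fp$ small forces each two-dimensional region $\mathit{fat}_{\fp}(T)$ away from the fan boundaries and into this regime, so the thin enlargement of $K(T)$ absorbs the perturbation and the total vector remains in $K(T)$, uniformly in $t$.

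The hard part, and where weak reversibility is indispensable, is the behavior on the lower-dimensional cones and transition strips of the fan, where several sources are simultaneously dominant and the clean domination degrades. Here I would invoke the hypothesis that every edge lies in a directed cycle: since the reaction vectors around any cycle sum to zero, the reaction vectors emanating from a dominant face $F_{\widehat{T}}$ cannot all point strictly toward increasing $\langle \b{s},\b{X}\rangle$, which forces $K(T)$ to contain directions pointing back toward the interior of $P$. This endotactic-type property is what makes $K(T)$ genuinely solid and prevents subdominant perturbations along the fan boundaries from carrying $\b{f}$ out of $K(T)$. By contrast, the face-consistency requirement of \cref{tropendo}, namely $K(M) \subseteq K(N)$ whenever $\widehat{N}$ is a face of $\widehat{M}$, comes for free from the normal-fan structure: a face of a normal cone corresponds to a \emph{larger} face of $P$, hence to a larger generating set of reaction vectors, so the inclusion of cones is automatic. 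Verifying that the cycle condition indeed yields inward-pointing solid cones on every face, uniformly across the fan, is the crux of the argument and the step I expect to require the most care.
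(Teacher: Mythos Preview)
The paper does not give its own proof of this theorem: it is quoted from \cite{gheorgheGAC} and \cite{gheorgheToricDI}, and the only content the paper adds is the sentence immediately following the statement, recording that a toric differential inclusion assigns to each region $\mathit{fat}_{\fp}(T_i)$ the \emph{polar cone} $K(T_i)=-\widehat{T}_i^{*}$. That sentence is precisely where your proposal goes wrong.

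Your cones $K(T)=\mathit{Cone}\{\b{v}(e):\b{s}(e)\in F_{\widehat{T}}\}$, enlarged slightly, are \emph{not} the polar cones $-\widehat{T}^{*}$; what you have built is the object the paper calls the \emph{dominance differential inclusion} in the Future Work section (and uses informally in \cref{egraphex2}), not a toric one. Showing that the system embeds into your $\cK(\cT)$ is a true and useful statement, but it is not the theorem. To reach a toric differential inclusion you would still need the containment $K(T)\subseteq -\widehat{T}^{*}$, and that is exactly the place weak reversibility is used in the cited proofs: since every edge lies on a directed cycle, each reaction vector $\b{v}(e)$ with $\b{s}(e)$ on the face $F_{\widehat{T}}$ satisfies $\langle \b{v}(e),\b{X}\rangle\le 0$ for all $\b{X}\in\widehat{T}$, i.e.\ $\b{v}(e)\in -\widehat{T}^{*}$. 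Your third paragraph brushes against this (``cannot all point strictly toward increasing $\langle\b{s},\b{X}\rangle$'') but draws the wrong conclusion from it --- the issue is not that $K(T)$ becomes solid, it is that the dominant sum already lands in $-\widehat{T}^{*}$, which then absorbs the subdominant perturbation because polar cones of low-dimensional $\widehat{T}$ are large (half-spaces, or all of $\bR^d$). Relatedly, the fan in \cite{gheorgheGAC,gheorgheToricDI} is generated by the hyperplanes $(\b{s}_i-\b{s}_j)^{\perp}$, generally a refinement of the normal fan of the Newton polytope; with only the normal fan your ``hard part'' on the transition strips does not close, since subdominant monomials need not be uniformly small there.
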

Note that in \cite{gheorgheGAC}, weakly reversible v$\ka$-polynomial dynamical systems are called ``k-variable toric dynamical systems" (see page 7 of \cite{gheorgheGAC}). A toric differential inclusion with exponential fan $\cT$ assigns to each region $\mathit{fat}_{\fp}(T_i)$ the negative dual, or \emph{polar cone}, $-\widehat{T}^*_i$ of the cone $\widehat{T}_i$ of the polyhedral fan $\widehat{\cT}$. Following the proof of \cref{toric} in \cite{gheorgheGAC}, it can be shown that when the v$\ka$-polynomial dynamical system is two dimensional and has no linear conserved quantities, the embedding implied above is strict. 

\begin{lemma}
Any toric differential inclusion in $\bR^2_{>0}$ is tropically endotactic.
\end{lemma}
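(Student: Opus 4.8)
The plan is to verify directly that a toric differential inclusion meets the two requirements of \cref{tropendo}. Write $\cT$ for its exponential fan, $\widehat{\cT} = \log(\cT)$, and recall that the toric inclusion assigns to each region the polar cone $K(T_i) = -\widehat{T}_i^*$. The \emph{face} condition is then immediate from the order-reversing property of duality: if $\widehat{N}$ is a face of $\widehat{M}$ then $\widehat{N}\subseteq \widehat{M}$, hence $\widehat{N}^*\supseteq \widehat{M}^*$, and so $K(M) = -\widehat{M}^* \subseteq -\widehat{N}^* = K(N)$, exactly as required. (For $T_i = \b{1}$ the cone $\widehat{N} = \b{0}$ gives $K(\b{1}) = \bR^2$, which satisfies the face condition trivially and is excluded from the escape-direction condition.) So the entire content is the first condition: for every $T_i \neq \b{1}$ there is a $\delta>0$ with $K(T_i)\cap B^{\delta}(T_i)^{\circ} = \emptyset$.

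The key reformulation I would use is that, since $K(T_i) = -\widehat{T}_i^*$ and $B^{\delta}(T_i)$ are closed convex cones, it suffices to produce a single separating direction lying \emph{inside} $\widehat{T}_i$. Precisely, by the bipolar theorem the set of $\b{r}$ with $-\widehat{T}_i^*\subseteq \{\b{z}\mid \b{z}\cdot\b{r}\leq 0\}$ is exactly $\widehat{T}_i$; therefore it is enough to exhibit, for each $T_i$, a vector $\b{r}^* \in \widehat{T}_i$ and a $\delta>0$ such that
\[
B^{\delta}(T_i) \subseteq \{\b{z}\mid \b{z}\cdot \b{r}^* \geq 0\}.
\]
With such an $\b{r}^*$ we get $B^{\delta}(T_i)^{\circ}\subseteq \{\b{z}\cdot\b{r}^*>0\}$ while $K(T_i)\subseteq\{\b{z}\cdot\b{r}^*\leq 0\}$, so the interiors cannot meet. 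Geometrically this only asks that the escape directions of $T_i$ all point into the same half-plane as some ray of $\widehat{T}_i$, which is plausible since every escape curve travels in a log-direction tending to a ray $\b{r}\in\widehat{T}_i$.

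To produce $\b{r}^*$ I would use the explicit tangent computation behind \cref{edirs}. For an escape curve $C(t) = \exp(\b{r}t + g(t)\b{p})$ one has $C_j'(t) = C_j(t)\,(r_j + g'(t)p_j)$, so as $t\to\infty$ the normalized tangent is governed by the coordinate with the largest $r_j$: for a ray $\b{r}$ with a strict maximal coordinate $i^*$ one gets $[C'(t)]_n \to \mathit{sgn}(r_{i^*})\,\b{\hat{e}}_{i^*}$, while the diagonal directions $\pm(1,1)$ generate a whole quadrant of limiting tangents. This reproduces the enumeration recorded in \cref{escapedirsfig}, so for small $\delta$ the cone $B^{\delta}(T_i)$ is one of: a thin cone about a coordinate half-line, a coordinate half-plane, or $\pm\bR^2_{>0}$. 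In each case a separating $\b{r}^* \in \widehat{T}_i$ is visible: if $\widehat{T}_i$ lies strictly inside one open quadrant away from the diagonal, every ray of $\widehat{T}_i$ separates the thin escape cone and any $\b{r}^*\in\widehat{T}_i$ works; if $\overline{T_i}$ meets a coordinate axis, then $\widehat{T}_i$ contains the corresponding axis ray (for instance $(-1,0)$ when the closure meets $\{x=0\}$), and that ray is the inner normal of the escape half-plane; and if the relative interior of $\widehat{T}_i$ contains a diagonal ray $\pm(1,1)$, then $B^{\delta}(T_i) = \pm\bR^2_{>0}$ sits in the half-plane with inner normal $\pm(1,1)\in\widehat{T}_i$.

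The main obstacle is exactly this last bookkeeping. The cones adjacent to the axes and those containing a diagonal ray are the ones for which $B^{\delta}(T_i)$ is large (a half-plane or a full quadrant) rather than a thin cone, so the separation has zero margin along part of $\partial B^{\delta}(T_i)$ and one must confirm that $K(T_i) = -\widehat{T}_i^*$ really lands on the closed opposite side. I expect this to reduce to checking that the boundary rays of $\widehat{T}_i$ whose escape tangents are the extreme generators of $B^{\delta}(T_i)$ are orthogonal to, or on the correct side of, the chosen $\b{r}^*$, which is where the precise limiting tangents computed in \cref{examples} are needed. Once a separating $\b{r}^*\in\widehat{T}_i$ is produced in every case, both conditions of \cref{tropendo} hold and the toric differential inclusion is tropically endotactic.
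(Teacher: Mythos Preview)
Your plan is correct and essentially parallel to the paper's: both verify the face condition via the order-reversing property of duality, and both separate $K(T_i)=-\widehat{T}_i^*$ from $B^{\delta}(T_i)^{\circ}$ by appealing to the classification of escape cones recorded in \cref{escapedirsfig}. The one organisational difference worth noting is that the paper bypasses your quadrant-by-quadrant search for a separating $\b{r}^*\in\widehat{T}_i$ by recording a single structural fact: for every $N\neq\b{1}$ either $B^{\delta}(N)\subseteq\widehat{N}^*$ or $\widehat{N}^*\subseteq B^{\delta}(N)$, and in the latter case $B^{\delta}(N)$ is still contained in some half-plane $H$. In the first case $(-\widehat{N}^*)\cap(\widehat{N}^*)^{\circ}=\emptyset$ is immediate; in the second, $\partial H$ is a supporting line of $\widehat{N}^*$ (since $\widehat{N}^*\subseteq H$), hence also of $-\widehat{N}^*$, and it separates $-\widehat{N}^*$ from $B^{\delta}(N)$. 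This nesting dichotomy is equivalent to your criterion --- the inner normal of $H$ lies in $(\widehat{N}^*)^*=\widehat{N}$, so it \emph{is} your $\b{r}^*$ --- but it dispatches in one line the ``main obstacle'' you flag (the axis-adjacent and diagonal cases where $B^{\delta}$ is a full half-plane or quadrant), rather than requiring individual inspection of each.
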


\begin{proof}
Let $\cF$ be a toric differential inclusion. $\cF$ is a $\cT$-cone differential inclusion with the property that if $N\in \cT$ is a face of $M\in \cT$, then $K(M)\subseteq K(N)$. We need only show that for every $N\in \cT$ besides $\b{1}$, 
\[K(N) \cap B^{\delta}(N)^{\circ} = \emptyset\]
The construction of $B^{\delta}(N)$ implies that $B^{\delta}(N) \subseteq \widehat{N}^*$ or $\widehat{N}^* \subseteq B^{\delta}(N)$ (see \cref{escapedirsfig}), where $\widehat{N}^*$ is the dual cone, while $K(N) = -\widehat{N}^*$. If $B^{\delta}(N) \subseteq \widehat{N}^*$, then $B^{\delta}(N)^{\circ}$ clearly does not intersect $-\widehat{N}^*$. If $\widehat{N}^* \subseteq B^{\delta}(N) \subset H$ where $H$ is a half-plane, then the line $\partial H$ is a supporting line of both $-\widehat{N}^*$ (because it is a supporting line of $\widehat{N}^*$) and $B^{\delta}(N)$. The line $\partial H$ must also separate $B^{\delta}(N)$ and $-\widehat{N}^*$, because it does not separate $B^{\delta}(N)$ and $\widehat{N}^*$.
\end{proof}

We then obtain the following:
\begin{corollary}
Any weakly reversible v$\ka$-polynomial dynamical system in $\bR^2_{>0}$ with no linear conserved quantities is tropically endotactic.
\end{corollary}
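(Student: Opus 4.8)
The plan is to obtain the Corollary by combining the two results immediately preceding it. By \cref{toric}, any weakly reversible v$\ka$-polynomial dynamical system $\b{\dot{x}} = \b{f}(\b{x},t)$ can be embedded into a toric differential inclusion $\cF$, whose exponential fan I will call $\cT$; and the Lemma just proved shows that every toric differential inclusion in $\bR^2_{>0}$ is tropically endotactic. By \cref{tropendosys}, a v$\ka$-polynomial system is tropically endotactic precisely when it is \emph{strictly} embedded into a tropically endotactic differential inclusion. Hence the only thing left to establish is that, under the stated hypotheses (two dimensions, no linear conserved quantities), the embedding furnished by \cref{toric} is strict, i.e.\ that $\b{f}(\b{x},t) \in K(T_i)^{\circ}$ whenever $\b{x} \in \mathit{fat}_{\fp}(T_i)$, rather than merely $\b{f}(\b{x},t) \in K(T_i)$.

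To do this I would first unpack the construction of $\cF$ from the proof of \cref{toric}: on each region $\mathit{fat}_{\fp}(T_i)$ the assigned cone is the polar cone $-\widehat{T}_i^*$, which is generated by the reaction vectors $\b{v}(e)$ whose source monomials $\b{x}^{\b{s}(e)}$ dominate on that region. The (non-strict) embedding comes from the fact that, for $\fp$ small, the dominant monomials are exponentially larger than the remaining ones, so $\b{f}(\b{x},t)$ is a strictly positive combination of the dominant reaction vectors plus a small correction coming from the subdominant terms. I would then show that this combination cannot lie on a supporting hyperplane of $K(T_i)$: because every coefficient satisfies $\ka_e(t) \geq \varepsilon > 0$, each relevant reaction vector enters $\b{f}$ with a strictly positive weight, uniformly in $t$, which is exactly what is needed to keep $\b{f}(\b{x},t)$ off $\partial K(T_i)$ independently of the (possibly time-dependent) rates.

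The step I expect to be the main obstacle is tying strictness to the absence of linear conserved quantities, and this is where dimension two is used. A linear conserved quantity is a nonzero $\b{c}$ with $\b{c} \cdot \b{v}(e) = 0$ for every edge $e$; its existence would force $\b{f}(\b{x},t)$ into the line $\b{c}^{\perp}$, hence onto a proper face of every two-dimensional cone $K(T_i)$, so the embedding could not be strict. Conversely, ruling out such $\b{c}$ means the reaction vectors span $\bR^2$, and weak reversibility (every edge lies on a directed cycle) guarantees that at each dominant source the incident reaction vectors genuinely span the cone $K(T_i)$ rather than a single ray, so their positive combination has nonempty interior. Making this spanning argument precise for every region of $\cT$---including the lower-dimensional cones, where the face condition of a toric differential inclusion (namely $K(M) \subseteq K(N)$ whenever $N$ is a face of $M$) must be reconciled with strictness---is the technical heart of the proof. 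Once strictness is secured, the Corollary follows at once: the system is strictly embedded in $\cF$, the Lemma makes $\cF$ tropically endotactic, and \cref{tropendosys} then yields that the system is tropically endotactic.
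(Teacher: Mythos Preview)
Your proposal is correct and follows the same route as the paper. In the paper, the corollary is not given a standalone proof at all: the paragraph immediately after \cref{toric} asserts that ``following the proof of \cref{toric} in \cite{gheorgheGAC}, it can be shown that when the v$\ka$-polynomial dynamical system is two dimensional and has no linear conserved quantities, the embedding implied above is strict,'' and then the corollary is stated as an immediate consequence of this remark together with the Lemma and \cref{tropendosys}. You have correctly identified that the only nontrivial gap between \cref{toric} plus the Lemma and the corollary is strictness of the embedding, and you attempt to sketch that argument yourself, whereas the paper simply defers it to \cite{gheorgheGAC}.

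One small imprecision in your sketch: you describe $K(T_i) = -\widehat{T}_i^*$ as ``generated by the reaction vectors whose source monomials dominate on that region.'' The polar cone is defined purely from the fan, not from the reaction vectors; the content of \cref{toric} is that the dominant reaction directions land inside $-\widehat{T}_i^*$, not that they generate it. This does not affect the logic of your argument for strictness, which hinges on the positive combination of dominant reaction vectors having nonempty interior in $\bR^2$ when there is no linear conserved quantity --- but be careful to phrase it as ``the strictly positive combination lies in $(-\widehat{T}_i^*)^{\circ}$'' rather than ``the reaction vectors span $K(T_i)$.''
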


This result can be used to prove the \emph{global attractor conjecture} in three dimensions, as in \cite{cranaz}.

\section{Future Work}

We will in upcoming work introduce an algorithmic construction of an $\cN$-cone differential inclusion, which we call the \emph{dominance differential inclusion}, into which a given polynomial dynamical system is embedded. In fact, given a Euclidean embedded graph $G$ and an exponential fan $\cN$, we can construct the dominance differential inclusion $\cD_G(\cN)$ such that for any polynomial dynamical system generated by $G$, there is some $\fp$ such that the system is strictly embedded in $\cD_G(\cN)$. The dominance differential inclusion was used to show permanence of examples \cref{mLV} and \cref{rrobsys}. We conjecture that this construction is minimal, in the sense that if a polynomial dynamical system can be strictly embedded into some tropically endotactic differential inclusion, then $\cD_{G}$ itself must be tropically endotactic.

We have shown in this paper that if a v$\ka$-polynomial dynamical system is tropically endotactic, it is permanent. On the other hand, we have found examples of v$\ka$-polynomial dynamical systems that are permanent and fail to be tropically endotactic, so the property of being tropically endotactic is not necessary and sufficient for permanence. However, in future work we will show that this property is closely related to a necessary condition for permanence.

The definition of tropically endotactic differential inclusions can be extended to higher dimensions, and in future work we will show that it gives rise to a necessary condition for permanence in any dimension.

\section{Acknowledgments}
The authors have received partial support from NSF-DMS-1412643.

\bibliographystyle{siamplain}
\bibliography{../../../persistencebib}
\end{document}